\newcounter{lemma}
\newtheorem{lemma}{Lemma}
\newtheorem{definition}[lemma]{Definition}
\newtheorem{theorem}[lemma]{Theorem}
\newtheorem{remark}[lemma]{Remark}
\newtheorem{hyp}[lemma]{Hypothesis}
\begin{document}

\newenvironment{proof}{\hspace{-\parindent}{\bf{}Proof:}}
{\hspace*{\fill}$\blacksquare$}
\newcommand {\eps} {\varepsilon}
\newcommand {\Z} {\mathbbm{Z}}
\newcommand {\R} {\mathbbm{R}}
\newcommand {\N} {\mathbbm{N}}
\newcommand {\ang} {\measuredangle}
\newcommand {\e} {{\rm{e}}}
\newcommand {\rank} {{\rm{rank}}}
\newcommand {\Span} {{\rm{span}}}
\newcommand {\card} {{\rm{card}}}
\newcommand {\OO} {\mathcal{O}}
\newcommand {\nr} {\mathcal{N}}
\newcommand {\Sn}[1] {\mathcal{S}^{#1}}
\newcommand {\range} {\mathcal{R}}
\newcommand {\kernel} {\mathcal{N}}
\newcommand{\one}{\mathbb{1}}
\renewcommand{\thefootnote}{\fnsymbol{footnote}}

\newcommand{\todo}[1]{\par\vspace{2mm} \noindent
\marginpar{todo}
\framebox{\begin{minipage}[H]{0.75\textwidth}
\tt #1 \end{minipage}}\vspace{2mm}\par}


\title{\bf Continuation and collapse of homoclinic tangles}

\author{Wolf-J\"urgen Beyn\footnotemark[1]\qquad
Thorsten H\"uls\footnotemark[1]\\\normalsize
Department of Mathematics, Bielefeld University\\\normalsize
POB 100131, 33501 Bielefeld, Germany\\\normalsize
\texttt{beyn@math.uni-bielefeld.de}\qquad
\texttt{huels@math.uni-bielefeld.de}
}
\footnotetext[1]{Supported by
CRC 701 'Spectral Structures and Topological Methods in Mathematics'.}
\maketitle


\begin{abstract}
By a classical theorem transversal homoclinic points of maps lead
to shift dynamics on a maximal invariant set, also referred to as
a homoclinic tangle. In this paper we study the fate of 
homoclinic tangles in parameterized systems from the viewpoint of
numerical continuation and bifurcation theory. The bifurcation result
shows that the maximal invariant set near a homoclinic tangency, where
two homoclinic tangles collide, can be characterized by a system
of bifurcation equations that is indexed by a symbolic sequence.
For the H\'{e}non family we investigate in detail the bifurcation 
structure of multi-humped orbits originating from several tangencies.
The homoclinic network found by numerical continuation is explained
by combining our bifurcation result with graph-theoretical arguments.

\end{abstract}

\noindent \textbf{Keywords:} Homoclinic tangency, symbolic dynamics, numerical continuation, bifurcation of homoclinic orbits. 

\noindent \textbf{AMS Subject Classification:} 37N30, 65P20, 65P30


\section{Introduction}
\label{S1}

We consider parameter dependent, discrete time dynamical systems of
the form 
\begin{equation}\label{a1}
x_{n+1} = f(x_n,\lambda),\quad n\in \Z,
\end{equation}
where $f(\cdot,\lambda),\lambda\in \R$ are smooth diffeomorphisms 
in $\R^k$. We assume that the system \eqref{a1} has a smooth
branch of hyperbolic fixed points and our main interest is in branches of homoclinic orbits that return to these fixed points. Generically
one finds turning points on these branches which correspond to homoclinic
tangencies where stable and unstable manifolds  of the fixed point
intersect nontransversally, see \cite{kl98a}, \cite{bhkz04}
for the precise relation.
While the dynamics near transversal intersections are well understood
through the celebrated Smale-Shilnikov-Birkhoff Theorem
(see \cite{sm67},\cite{sh67},\cite{gh90},\cite{p00}), the picture 
near homoclinic tangencies still seems to be far from being complete.
Among the many references, we mention the monograph \cite{pt93}
which contains a detailed geometrical study of the bifurcations that
occur near tangencies, the work \cite{gts03} which supports the
generic occurrence of homoclinic tangencies of all orders,
and the paper \cite{kn06} which proves shift dynamics in arbitrarily
small neighborhoods of the tangency.
We also mention that homoclinic orbits with a tangency can be computed
numerically in a robust way by solving boundary value problems on  a finite interval, and that the errors caused by this approximation have been completely analyzed, see \cite{kl97, kl98a, bhkz04}.

In this paper, we consider bifurcations of tangential homoclinic orbits
from a local as well as from a global viewpoint. 

The local study determines the elements of the maximal invariant
set in a neighborhood of the tangent orbit and of the  critical parameter 
from a set of bifurcation equations. Using the same shift space
as in the transversal case, we associate with any sequence of symbols a 
bifurcation equation that describes those branches of orbits
which have the return pattern of the symbolic sequence.
Such a result does not fully resolve the dynamics
near tangencies, but reduces the problem to a set of perturbed bifurcation equations for which the unperturbed form is known (similar to Liapunov-Schmidt reduction). For example, multi-humped homoclinic orbits that enter
and leave a neighborhood of the fixed point several times, relate to
a perturbed system of hilltop bifurcations, cf.\ \cite{gs85}.
Our main results will be stated in Section \ref{S2} with the
proofs deferred to Sections \ref{S5}, \ref{S6}.

The global viewpoint asks for possible bifurcations of
multi-humped orbits that are known to emerge from the tangencies. 
We take the H\'{e}non family as a model equation for a detailed numerical
study of the homoclinic network that arises from a total of $4$
primary homoclinic tangencies. It turns out that the connected components
of this network are by no means arbitrary. Rather, they
follow certain rules governing the bifurcations of multi-humped orbits. 
Combining these rules with graph-theoretical and combinatorial arguments
allows to predict the structure to a large extent. Only some fine 
details are left to numerical computations as will be demonstrated in
Sections \ref{S3} and \ref{S4}. 


\section{Setting of the problem and main results}
\label{S2}
The aim of this section is to state our main result on bifurcation
equations near homoclinic tangencies.
We first describe the setting and state our assumptions:
\begin{itemize}
\item [\textbf{A1}] $f\in\mathcal{C}^\infty(\R^k\times \Lambda_0, \R^k)$
for some open set $\Lambda_0 \subset \R$  and 
$f(\cdot,\lambda)$ is a diffeomorphism for all $\lambda \in \Lambda_0$,
\item [\textbf{A2}] $f(\xi(\lambda),\lambda) = \xi(\lambda)$ 
for some smooth branch $\xi(\lambda) \in \R^k, \lambda\in \Lambda_0$,
\item [\textbf{A3}] $f_x(\xi(\lambda),\lambda) \in \R^{k,k}$ is
  hyperbolic for all 
  $\lambda \in \Lambda_0$.
\end{itemize}
Clearly, if $\xi_0$ is a hyperbolic fixed point of 
$f(\cdot,\lambda_0)$ for some $\lambda_0\in \R$ then \textbf{A2} and 
\textbf{A3} follow for some neighborhood $\Lambda_0$ of  $\lambda_0$.
Replacing $f$ by $g(x,\lambda) =
f(x+\xi(\lambda+\lambda_0),\lambda+\lambda_0) - \xi(\lambda+\lambda_0)$
shows that \textbf{A2}, \textbf{A3} can be assumed to hold for the trivial 
branch $\xi(\lambda)=0$ and for a neighborhood $\Lambda_0$ of zero. This will
be our standing assumption throughout Sections \ref{S5} and \ref{S6}. 

It is well known that transversal homoclinic orbits lead to chaotic dynamics
on a nearby invariant set commonly referred to as a homoclinic tangle. 
Let us first assume that this situation occurs at some parameter value
 $\tilde \lambda\in \Lambda_0$. 
\begin{itemize}
\item [\textbf{A4}] For some $\tilde \lambda \in \Lambda_0$ there exists a
  nontrivial homoclinic orbit $\tilde x_\Z =(\tilde x_n)_{n\in \Z}$,
  i.e.\ $\lim_{n \to \pm \infty} \tilde x_n = \tilde \xi:=\xi(\tilde \lambda)$
and $\tilde x_n \neq \tilde \xi$ for some $n \in \Z$.
  This orbit is \textit{transversal} 
  in the sense that the variational equation
  \begin{equation}\label{var-eq1}
  y_{n+1} = f_x(\tilde x_n,\tilde \lambda) y_n,\quad n \in \Z
  \end{equation}
  has no nontrivial bounded solution on $\Z$.
\end{itemize}

In this case the stable and the unstable manifold of $\tilde \xi$ intersect
transversally at each $\tilde{x}_n$ and the set
$
\tilde H = \{\tilde x_n\}_{n\in\Z} \cup \{\tilde \xi\}
$
is hyperbolic, cf.\ \cite{pa88}. Moreover, there exists 
an open neighborhood $U$ of $\tilde H$ such that the dynamics on the
maximal invariant set 
\begin{equation} \label{maxinv}
M(U,\tilde \lambda) = \{x\in U: f^n(x,\tilde \lambda) \in U\ \forall n
\in \Z\}
\end{equation}
is conjugate to a subshift of finite type (see 
 the Smale-Shilnikov-Birkhoff Homoclinic Theorem in
\cite{gh90}  and \cite[Chapter 5]{p00} for a proof). 
To be precise, let $N\ge 2$ and let 
$$
S_N = \{0,1,\dots,N-1\}^\Z
$$
be the shift space with $N$ symbols which is compact w.r.t.\ the metric
\begin{equation} \label{defmetric}
d(s,t) = \sum_{j \in \Z} 2^{-|j|} |s_j - t_j|,\quad s = (s_j)_{j\in\Z}
\in S_N.
\end{equation}
Let $\beta$ be the Bernoulli shift
$$
\beta(s)_i = s_{i+1},\quad i \in \Z,\quad s \in S_N.
$$
Consider a special subshift of finite type, see \cite{lm95} 
$$
\Omega_N = \{s\in S_N : A^{(N)}_{s_i,s_{i+1}} = 1 \ \forall i \in \Z\}
$$
generated by the $N\times N$ binary matrix
\small $$
A^{(N)} = 
\begin{pmatrix}
1 & 1 & 0 & \cdots & 0\\
0 & 0 & 1 & \ddots & \vdots\\  
\vdots && \ddots & 1 & 0\\
0 &&&\ddots&1\\
1 & 0 &\cdots& \cdots& 0
\end{pmatrix}
\in \{0,1\}^{N\times N}.
$$ \normalsize
Then there exists a neighborhood $U$ of $\tilde H$, an integer $N \ge 2$ and a
homeomorphism $h:\Omega_N \to M(U,\tilde \lambda)$ such that 
\begin{equation}\label{1.12}
f(\cdot,\tilde \lambda) \circ h = h \circ \beta \quad \text{in } \Omega_N.
\end{equation}

A continuation of the transversal homoclinic orbit w.r.t.\ the
parameter $\lambda$ leads to a curve of homoclinic orbits
that typically exhibits turning points. As an example we refer
to Figure \ref{fig1} for the H\'{e}non map. Parts of the branch that
can be parametrized by $\lambda$ belong to transversal homoclinic
orbits while (quadratic) turning points correspond to homoclinic
orbits with a (quadratic) tangency, see Theorem \ref{Th1.2} for
a precise statement. In this case we replace Assumption \textbf{A4}
by
\begin{itemize}
\item [\textbf{B4}] For some $\bar \lambda \in \Lambda_0$ there exists a
  nontrivial homoclinic orbit $\bar x_\Z =(\bar x_n)_{n\in \Z}$ converging
towards $\bar{\xi}=\xi(\bar{\lambda})$.
    The orbit is \textit{tangential} 
  in the sense that the variational equation
  \begin{equation} \label{vareq}
  y_{n+1} = f_x(\bar x_n,\bar \lambda) y_n,\quad n \in \Z
  \end{equation}
  has a non-trivial solution 
 $u_\Z=(u_n)_{n\in \Z}$ that is unique up to constant multiples.
\end{itemize}
Since the fixed point stays hyperbolic we have exponential decay for both
the orbit and the solution of \eqref{vareq}, i.e.\ for some
$\alpha,C_e >0$
\begin{equation} \label{exdec}
\|\bar{x}_n - \bar{\xi}\| +\|u_n\| \le C_e \e^{-\alpha |n|}, \quad n\in \Z.
\end{equation}
Therefore,  we may normalize 
\begin{equation} \label{unormalize}
\|u_\Z\|^2_{\ell^2} = \langle u_\Z, u_\Z\rangle^2_{\ell^2} 
= \sum_{n\in\Z} u_n^T u_n = 1.
\end{equation}
In the following we use $\langle\cdot,\cdot \rangle$ to denote the inner
product in $\ell^2$.
The assumption on \eqref{vareq} in  \textbf{B4} holds if and 
only if the tangent spaces of the stable and unstable manifold have
a one-dimensional intersection, i.e.\
$$
T_{\bar x_n} W^s(\bar \xi) \cap T_{\bar x_n} W^u(\bar \xi) =
\Span(\bar u_n),\quad n \in \Z.
$$
We refer to Theorem \ref{Th1.2} and to \cite[Appendix]{kl98a} for a more general statement.

Consider open neighborhoods  $U\subset \R^k$  of 
$H = \{\bar x_n\}_{n\in\Z} \cup \{\bar \xi\}$ and $\Lambda \subset \Lambda_0$
of $\bar \lambda$, respectively.
Our main interest is in  the dynamics on the
 maximal invariant set
$$
 M(U,\Lambda) = \{(x,\lambda)\in U\times \Lambda: f^n(x,\lambda)
\in U\ \forall n \in \Z\}.
$$ 
As in the transversal case, we will still work with the subshift
$(\Omega_N, \beta)$ but the conjugacy \eqref{1.12} will be replaced by
a set of bifurcation equations. 
For any $s \in \Omega_N$ define the index set
\begin{equation} \label{indexset}
I(s) = \{n\in\Z: s_n = 1\},
\end{equation}
and note that $I:\Omega_N\to \Z(N) \subset 2^\Z$ is bijective, where
\begin{equation} \label{zndef}
\Z(N) = \{J \subset \Z: |j-k| \ge N\ \forall j,k\in J, j\neq k\}.
\end{equation}
With any $s\in \Omega_N$ we associate the Banach space
$$
\ell^\infty(s) = \{\tau \in \R^{I(s)}: \|\tau\|_\infty < \infty\},
$$
$$
\|\tau\|_\infty = \sup_{\ell \in I(s)}|\tau(\ell)|,\quad
B_\rho = \{\tau \in \ell^\infty(s): \|\tau\|_\infty \le \rho\}.
$$

Our aim is to determine the elements of $M(U,\Lambda)$ from a set of
bifurcation equations
\begin{equation}\label{1.15}
g_s(\tau,\lambda) = 0, \quad \tau\in B_{\rho_{\tau}},\lambda\in \Lambda,
\end{equation}
where $s \in \Omega_N$, $\rho_\tau > 0$ is independent of $s$ and 
$$
g_s:
\begin{array}{rcl}
B_{\rho_\tau}\times \Lambda & \to & \ell^\infty(s)\\
(\tau,\lambda) & \mapsto & g_s(\tau,\lambda)  
\end{array}
$$
is a sufficiently smooth map. Note that \eqref{1.15} constitutes a finite or
 an infinite
system of equations depending on the cardinality of  $I(s)$.

In order to formulate the precise statement we define the pseudo
orbits 
\begin{equation}\label{1.16}
p_n(s) = \bar{\xi}+ \sum_{\ell  \in I(s)} (\bar
x_{n-\ell}-\bar{\xi}),\quad n \in \Z.
\end{equation}
Equation \eqref{exdec} shows that $p_\Z(s)$ is a bounded sequence, in particular
\begin{equation} \label{pseudoest}
\|p_n(s)-\bar{\xi}\| \le \bar{C}= C_e \frac{1+\e^{-\alpha}}{1-\e^{-\alpha}}, \quad n\in \Z.
\end{equation} 
Setting $\bar \xi_n = \bar \xi$ for all $n\in \Z$ we write 
\eqref{1.16} more formally as
$$
p_\Z(s) = \bar{\xi}_\Z + \sum_{\ell \in I(s)} \beta^{-\ell}(\bar
x_\Z-\bar{\xi}_\Z).
$$
Here and it what follows we use the symbol $\beta$  to denote the shift
of sequences in $\R^k$. Thus $\beta$ acts as an operator in sequence spaces
such as  $\ell^{p}(\R^k), 1\le p \le \infty$. 

Similarly, for every $\tau\in
\ell^\infty(s)$ we define the bounded sequence 
\begin{equation}\label{1.17}
v_\Z(s,\tau) = \sum_{\ell \in I(s)} \tau_\ell \beta^{-\ell}u_\Z .
\end{equation}
Note that the sequence $p_\Z(s)$  has humps at the positions
defined by $I(s)$ and that $p_\Z(s)$ is a pseudo orbit of
$f(\cdot,\bar \lambda)$ with a small error, see Lemma \ref{L5.3}.
 The term $v_\Z(s,\tau)$
shifts the solution of the variational equation to the positions defined
by $I(s)$ and combines them linearly.

\begin{theorem}\label{Th1.1}
Let assumptions {\bf A1 - A3} and {\bf B4} hold.
Then there exist constants $0<r_{\tau}\le\rho_\tau$, $N \in \N$ and 
neighborhoods $U$
of $H$, $\Lambda$ of $\bar \lambda$ and for any $s \in \Omega_N$ smooth
functions 
\begin{eqnarray*}
g_s &:& B_{\rho_\tau} \times \Lambda \to \ell^\infty(s),\\
x_{\Z,s} &:& B_{\rho_\tau} \times \Lambda \to \ell^\infty(\R^k)  
\end{eqnarray*}
with the following properties.
\begin{itemize}
\item[(i)] For any point $(y_0,\lambda) \in
M(U,\Lambda)$ with orbit $y_{n} = f^n(y_0,\lambda)$, $n \in \Z$ there
exists an index  $\nu \in \Z$ and
 elements $s \in \Omega_N$, $\tau \in B_{\rho_\tau} \subset
\ell^\infty(s)$  such that 
\begin{align}  \ \label{1.20}
\beta^\nu y_\Z &= x_{\Z,s}(\tau,\lambda) + p_\Z(s) + v_\Z(s,\tau),\\
g_s(\tau,\lambda) &= 0. \label{1.21}
\end{align}  

\item[(ii)]
Conversely, if $s \in \Omega_N$, $\tau \in B_{r_\tau}\subset \ell^{\infty}(s)$, $\lambda
\in \Lambda$ satisfy \eqref{1.21}, 
then there exists $\nu \in \Z$ such that $(y_n,\lambda)_{n\in \Z}$, 
with $y_\Z$ given by \eqref{1.20}, belongs to $M(U,\Lambda)$.
\end{itemize}
\end{theorem}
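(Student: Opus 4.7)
The plan is to prove Theorem~\ref{Th1.1} by a Lyapunov--Schmidt reduction applied to the orbit equation at the tangent homoclinic orbit. Substituting the ansatz~\eqref{1.20} into $y_{n+1}=f(y_n,\lambda)$ leads to an operator equation $F_s(x_\Z,\tau,\lambda)=0$ on $\ell^\infty(\R^k)$, with $s\in\Omega_N$ and the time shift $\nu\in\Z$ treated as discrete parameters. At the base point $(x_\Z,\tau,\lambda)=(0,0,\bar\lambda)$ the residual reduces to the pseudo-orbit defect $p_{n+1}(s)-f(p_n(s),\bar\lambda)$. A short computation using the exponential decay~\eqref{exdec} and the separation $|j-k|\ge N$ for distinct $j,k\in I(s)$ shows that this defect is small in $\ell^\infty(\R^k)$, uniformly in $s\in\Omega_N$, and can be forced arbitrarily small by enlarging $N$.

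Next I would analyze the linearization $\mathcal{L}_s:=D_{x_\Z}F_s(0,0,\bar\lambda)$ acting on $\ell^\infty(\R^k)$. Away from the hump positions, $\mathcal{L}_s$ is a small perturbation of the hyperbolic shift $x_\Z\mapsto\beta x_\Z-f_x(\bar\xi,\bar\lambda)x_\Z$, which has an exponential dichotomy on $\Z$. Near each $\ell\in I(s)$ it equals, up to small terms, a shifted copy of the variational operator $\tilde L$ appearing in~\textbf{B4}. By~\textbf{B4} and Palmer--Fredholm theory for such operators, $\tilde L$ has a one-dimensional kernel $\Span(u_\Z)$ and a one-dimensional cokernel $\Span(\psi_\Z)$, both exponentially decaying. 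A Palmer-style cut-and-paste argument then shows that $\mathcal{L}_s$ has an approximate kernel parameterized by the map $\tau\mapsto v_\Z(s,\tau)$ and an approximate cokernel of the same dimension, both naturally indexed by $\ell^\infty(s)$.

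The Lyapunov--Schmidt split is now forced: I impose on the correction $x_\Z$ the orthogonality conditions $\langle x_\Z,\beta^{-\ell}u_\Z\rangle=0$ for every $\ell\in I(s)$, so that the kernel contribution to the ansatz is entirely absorbed by $v_\Z(s,\tau)$. On this complement the restricted operator is uniformly invertible in $s$, with operator-norm bound independent of $|I(s)|$; this is the technical heart of the argument. A contraction-mapping / implicit-function step then produces a unique smooth $x_{\Z,s}(\tau,\lambda)\in\ell^\infty(\R^k)$ solving the range equation on $B_{\rho_\tau}\times\Lambda$. Projecting the full residual onto the approximate cokernel via the pairings $\langle\cdot,\beta^{-\ell}\psi_\Z\rangle$, $\ell\in I(s)$, defines the $\ell^\infty(s)$-valued bifurcation map $g_s(\tau,\lambda)$ of~\eqref{1.21}.

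Part~(i) then follows by extracting from any $(y_0,\lambda)\in M(U,\Lambda)$ the set $J\subset\Z$ of times at which the orbit visits a neighborhood of $\bar x_0$: choose $\nu$ so that $0\in J-\nu\in\Z(N)$, define $s\in\Omega_N$ by $I(s)=J-\nu$, and recover $\tau$ and $x_{\Z,s}$ from uniqueness of the Lyapunov--Schmidt decomposition. Part~(ii) is essentially automatic from the construction: any $(s,\tau,\lambda)$ solving~\eqref{1.21} yields a true orbit through the ansatz, and choosing $r_\tau\le\rho_\tau$ small enough keeps it inside $U$. The main obstacle I expect is establishing the uniform invertibility of the restricted $\mathcal{L}_s$ with constants independent of $s\in\Omega_N$, in particular when $|I(s)|=\infty$; this is what forces $N$ to be chosen large enough so that successive humps decouple exponentially, and is exactly where the decay~\eqref{exdec} and the combinatorial structure of $\Omega_N$ enter essentially.
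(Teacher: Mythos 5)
Your reduction step is essentially the paper's: the orthogonality conditions $\langle x_\Z,\beta^{-\ell}u_\Z\rangle=0$, the approximate kernel spanned by the shifted copies of $u_\Z$, the uniform (in $s$) invertibility of the restricted linearization via exponential decoupling of humps for large $N$, and the definition of $g_s$ through the adjoint solution are all exactly what the paper does (it phrases the cokernel projection as a bordered system $F(\cdot)+\sum_\ell g_\ell\beta^{-\ell}w_\Z=0$ rather than a pairing, but that is cosmetic). You also correctly identify the uniform invertibility as the technical heart of the reduction; the paper devotes Section 6 (Lemma \ref{Linv}, Lemma \ref{lsing}, the weighted norms $\ell^\infty_\omega$) to precisely this.

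The genuine gap is in your treatment of parts (i) and (ii), which you compress to "extract the visit times and use uniqueness" and "essentially automatic." Neither is. For (i), it is not enough to record when the orbit visits a neighborhood of $\bar x_0$: you must guarantee that consecutive visit times are separated by at least $N$, and that between visits the orbit is quantitatively close to the superposition $\sum_{m\in I(s)}\bar x_{n-m}$, so that the recovered $(\tau,x_\Z)$ actually lands in the small balls $B_{\rho_\tau}\times B_{\rho_x}$ where the uniqueness statement of the reduction applies. The paper achieves this by building $U$ as a chain of neighborhoods $V_0,\dots,V_\varkappa$ with forced transition properties (so that any orbit in $U$ traverses the hump in order and then stays near the fixed point for at least $N_*$ steps), and by a finite-interval shadowing estimate near the fixed point (Lemma \ref{L1}) that controls the orbit between humps. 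For (ii), the converse direction requires showing that the orbit produced by the ansatz stays inside this same $U$ for all time; this is again a quantitative argument (the paper's Step 4) using Lemma \ref{L1} and the bound \eqref{Ginvest}, and it is exactly here that the restriction to a possibly smaller radius $r_\tau\le\rho_\tau$ originates. Without some version of the neighborhood construction and the intermediate shadowing estimate, both directions of the theorem remain unproved.
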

\begin{remark}
Theorem \ref{Th1.1} reduces the study of $M(U,\Lambda)$ to the set of
bifurcation equations \eqref{1.21} with a symbolic index $s \in \Omega_N$. 
It may be regarded as a type of Liapunov-Schmidt reduction though we have
not formally put it into this framework. 
The construction of the neighborhood $U\times \Lambda$ uses some features 
from the transversal case \cite[Theorem 5.1]{p00}, but is
considerably more involved, see Sections \ref{S5} and \ref{S6}.
We also note
that we were not able to prove that one can take 
$r_{\tau}=\rho_{\tau}$ which would give a complete characterization of
 $M(U,\Lambda)$ in terms of \eqref{1.20},\eqref{1.21}.
Another issue which has not yet been resolved, is continuous dependence of the 
functions $x_{\Z,s}$ and $g_s$ on the symbolic sequence $s$ with respect to the metric \eqref{defmetric}.
 
\end{remark}
The functions $g_s$ and $x_{\Z,s}$ have several properties that we
discuss next. \\
Due to \textbf{B4} the adjoint equation 
\begin{equation}\label{1.22}
y_{n+1}^T f_x(\bar x_{n+1},\bar \lambda) = y_n^T,\quad n \in \Z
\end{equation}
has a non-trivial solution $w_\Z$ that is unique up to constant multiples,
cf.\ \cite[Section 2]{pa88}.
It decays exponentially as in \eqref{exdec} and can thus be normalized such
that $\|w_\Z\|_{\ell^2} = 1$. Without loss of generality we take $C_e$ in 
\eqref{exdec} such that 
\begin{equation} \label{exdectrans}
\|w_n \| \le C_e \e^{-\alpha |n|}, \quad n \in \Z.
\end{equation}

As is shown in \cite{kl98a} the quantities
\begin{equation}\label{cdef}
c_\lambda = \langle w_\Z, (f_\lambda(\bar x_n,\bar \lambda))_{n\in\Z}
\rangle, \quad  
c_x = \frac{1}{2} \langle w_\Z,(f_{xx}(\bar x_n,\bar \lambda)
u_n^2)_{n\in\Z}\rangle
\end{equation}
characterize the behavior of the branch of homoclinic orbits which
passes through $(\bar x_\Z,\bar \lambda)$.

\begin{theorem}\label{Th1.2}
The operator $F:\ell^\infty(\R^k)\times \R \to \ell^\infty(\R^k)$
defined by
\begin{equation} \label{Fdef}
F(x_\Z,\lambda) = \big( (x_{n+1} - f(x_n,\lambda))_{n\in\Z}\big) 
\end{equation}
has a limit point at $(\bar x_\Z,\bar \lambda)$ in the sense that 
$F(\bar x_\Z,\bar \lambda) = 0$ and 
$$
\nr(D_x F(\bar x_\Z,\bar \lambda)) = \Span\{u_\Z\}.
$$
The limit point is transversal, i.e.\
$$
D_\lambda F(\bar x_\Z,\bar \lambda) \notin \range(D_x F(\bar x_\Z,\bar
\lambda)) \quad \text{if and only if}\quad c_\lambda \neq 0. 
$$ 
Moreover, it is a quadratic turning point, i.e.\
$$
D^2_x F(\bar x_\Z,\bar \lambda) u_\Z^2 \notin \range(D_x F(\bar
x_\Z,\bar \lambda)) \quad \text{if and only if} \quad c_x \neq 0.
$$
\end{theorem}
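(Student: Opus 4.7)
The plan is to verify the three assertions by computing the relevant Fr\'echet derivatives of $F$ and then reading off the transversality conditions from a Fredholm-alternative characterization of $\range(D_x F(\bar x_\Z,\bar\lambda))$ via the adjoint solution $w_\Z$. The equation $F(\bar x_\Z,\bar\lambda)=0$ is immediate from \textbf{B4}. A direct differentiation gives
\begin{equation*}
D_x F(\bar x_\Z,\bar\lambda)\, y_\Z = \bigl(y_{n+1} - f_x(\bar x_n,\bar\lambda) y_n\bigr)_{n\in\Z},
\end{equation*}
so $y_\Z \in \kernel(D_x F(\bar x_\Z,\bar\lambda))$ iff $y_\Z$ is a bounded solution of \eqref{vareq}, which by \textbf{B4} means $y_\Z \in \Span\{u_\Z\}$.

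The heart of the proof is the Fredholm identity
\begin{equation*}
\eta_\Z \in \range(D_x F(\bar x_\Z,\bar\lambda)) \quad \Longleftrightarrow \quad \langle w_\Z, \eta_\Z\rangle = 0,
\end{equation*}
well defined for every $\eta_\Z \in \ell^\infty(\R^k)$ thanks to the exponential decay \eqref{exdectrans} of $w_\Z$. The direction ``$\Rightarrow$'' is a telescoping calculation: if $\eta_n = y_{n+1} - f_x(\bar x_n,\bar\lambda) y_n$ with $y \in \ell^\infty(\R^k)$, then using the adjoint equation \eqref{1.22} in the shifted form $w_n^T f_x(\bar x_n,\bar\lambda) = w_{n-1}^T$ one gets $\sum_n w_n^T \eta_n = \sum_n w_n^T y_{n+1} - \sum_n w_{n-1}^T y_n = 0$ after reindexing, with absolute convergence guaranteed by $\|w_n\|\,\|y_n\| \le C_e \|y\|_\infty \e^{-\alpha|n|}$. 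The direction ``$\Leftarrow$'' requires, for each $\eta_\Z$ satisfying the orthogonality condition, the construction of a bounded solution of $y_{n+1} - f_x(\bar x_n,\bar\lambda) y_n = \eta_n$. This is the discrete Fredholm alternative for linear difference equations whose coefficient matrices admit exponential dichotomies on $\Z^+$ and $\Z^-$, and I would invoke the corresponding results of \cite{pa88} rather than reprove them. The necessary dichotomies are present because \textbf{A3} makes $f_x(\bar\xi,\bar\lambda)$ hyperbolic and \textbf{B4} gives $\bar x_n \to \bar\xi$, so roughness yields dichotomies for the full variational equation on both half-lines.

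Once the range is so characterized, the transversality and turning point assertions reduce to inner product computations. Differentiation yields $D_\lambda F(\bar x_\Z,\bar\lambda) = -(f_\lambda(\bar x_n,\bar\lambda))_{n\in\Z}$, so by \eqref{cdef} one has $\langle w_\Z, D_\lambda F(\bar x_\Z,\bar\lambda)\rangle = -c_\lambda$, and hence $D_\lambda F(\bar x_\Z,\bar\lambda) \notin \range(D_x F(\bar x_\Z,\bar\lambda))$ iff $c_\lambda \neq 0$. Similarly
\begin{equation*}
D_x^2 F(\bar x_\Z,\bar\lambda)\, u_\Z^2 = -\bigl(f_{xx}(\bar x_n,\bar\lambda) u_n^2\bigr)_{n\in\Z},
\end{equation*}
whence $\langle w_\Z, D_x^2 F(\bar x_\Z,\bar\lambda) u_\Z^2\rangle = -2 c_x$, giving the quadratic turning point characterization. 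The only real obstacle is the surjectivity half of the Fredholm alternative in $\ell^\infty(\R^k)$, where one must use the dichotomies to construct a bounded right inverse of $D_x F(\bar x_\Z,\bar\lambda)$ on the annihilator of $w_\Z$; the kernel and range formulas, and hence both ``$c\neq 0$'' criteria, are automatic consequences of that step.
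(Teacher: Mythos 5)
Your proposal is correct and follows the same route the paper relies on: the paper does not spell out a proof of Theorem \ref{Th1.2} but attributes it to \cite{kl98a}, and the key ingredient you use --- the Fredholm characterization $\kernel(\mathcal{L})=\Span\{u_\Z\}$, $\range(\mathcal{L})=\{r_\Z:\langle w_\Z,r_\Z\rangle=0\}$ obtained from exponential dichotomies on both half-lines via \cite{pa88} --- is exactly the fact recorded as \eqref{rangechar} in Section \ref{S6}. Your telescoping argument for the inclusion $\range(\mathcal{L})\subset\{\langle w_\Z,\cdot\rangle=0\}$ and the two inner-product computations giving $-c_\lambda$ and $-2c_x$ are all correct.
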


\begin{remark}
For transversal homoclinic orbits ($\lambda \neq \bar \lambda$), the
Sacker-Sell spectrum, cf.\ \cite{ss78}, of the variational equation
\eqref{var-eq1} is a pure point spectrum 
$$
\Sigma_{\mathrm{ED}}(\lambda) = \{|\mu| : \mu \in
\sigma(f_x(\xi(\lambda),\lambda))\}.
$$
At a turning point, we find a spectral explosion to a continuous
Sacker-Sell spectrum 
$$
\Sigma_{\mathrm{ED}}(\bar \lambda) = \{|\mu| : \mu \in
\sigma(f_x(\bar \xi,\bar \lambda))\} \cup [\mu_s,\mu_u],
$$
where
$$
\mu_s = \max\{|\mu| < 1: \mu \in \sigma(f_x(\bar \xi,\bar
\lambda))\},\quad 
\mu_u = \min\{|\mu| > 1: \mu \in \sigma(f_x(\bar \xi,\bar
\lambda))\}.
$$
\end{remark}

Our second result shows that the constants $c_\lambda$, $c_x$ 
play an important role in the behavior of the bifurcation function
$g_s(\tau, \lambda)$.

\begin{theorem}\label{Th1.3}
Let the assumptions of Theorem \ref{Th1.1} hold. Then the functions
$x_{\Z,s}$, $g_s$ have the following properties 
\begin{itemize}
\item [(i)]
  \begin{align} \label{xshift}
   x_{\Z,\beta s} (\beta \tau,\lambda) &= \beta
   x_{\Z,s}(\tau,\lambda), \quad \text{where} \; I(\beta s)=I(s)-1,
\\ \label{gshift}
   g_{\beta s} (\beta \tau,\lambda) &= \beta g_s(\tau,\lambda). 
  \end{align}
\item [(ii)] For some constants $C>0$, $\alpha>0$, independent of
  $s,N$ and $\ell \in I(s)$
\begin{equation} \label{bifest}
\big| g_s(\tau,\lambda)_{\ell} - (c_\lambda(\lambda-\bar \lambda) + c_x
\tau^2_\ell)\big| \le C\big((\lambda-\bar{\lambda})^2 + 
|\lambda-\bar{\lambda}| \|\tau\|_\infty + \|\tau\|^3_\infty +
\e^{-\alpha N/2}\big).
\end{equation}
\end{itemize}
\end{theorem}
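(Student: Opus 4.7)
The plan is to lean on the Lyapunov--Schmidt-type construction that underlies Theorem \ref{Th1.1} (to be carried out in Sections \ref{S5}, \ref{S6}), and to read off both properties from the specific form of the bifurcation function. Recall that within the ansatz $y_{\Z} = p_{\Z}(s) + v_{\Z}(s,\tau) + x_{\Z,s}(\tau,\lambda)$ the correction $x_{\Z,s}$ solves a range equation projected away from the approximate kernel $\Span\{\beta^{-\ell}u_{\Z}:\ell\in I(s)\}$, while $g_{s}(\tau,\lambda)$ is obtained by testing the residual $F(y_{\Z},\lambda)$ against the shifted adjoint sequences $\beta^{-\ell}w_{\Z}$, $\ell\in I(s)$, where $w_{\Z}$ is the normalized solution of \eqref{1.22}.

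For part (i), I would use uniqueness of this decomposition. A direct computation from \eqref{1.16} and \eqref{1.17} shows $p_{\Z}(\beta s)=\beta p_{\Z}(s)$ and $v_{\Z}(\beta s,\beta\tau)=\beta v_{\Z}(s,\tau)$, since $I(\beta s)=I(s)-1$ merely relabels the sum by $\ell\mapsto\ell-1$. The operator $F$ in \eqref{Fdef} commutes with $\beta$, and the projection onto $\Span\{\beta^{-\ell}w_{\Z}:\ell\in I(s)\}$ transforms covariantly under the shift. Hence $(\beta x_{\Z,s}(\tau,\lambda),\beta g_{s}(\tau,\lambda))$ satisfies exactly the construction that defines $(x_{\Z,\beta s}(\beta\tau,\lambda),g_{\beta s}(\beta\tau,\lambda))$, and uniqueness yields \eqref{xshift}--\eqref{gshift}.

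For part (ii), I would expand $g_{s}(\tau,\lambda)_{\ell}$ component-by-component. After renormalizing $\beta^{-\ell}w_{\Z}$, the definition gives, schematically,
\begin{equation*}
g_{s}(\tau,\lambda)_{\ell} = \bigl\langle \beta^{-\ell}w_{\Z},\, F\bigl(p_{\Z}(s)+v_{\Z}(s,\tau)+x_{\Z,s}(\tau,\lambda),\lambda\bigr)\bigr\rangle + R_{\ell},
\end{equation*}
where $R_{\ell}$ collects normalization and projection artefacts that I would show are of the same order as the remainder in \eqref{bifest}. Now I would shift by $\beta^{\ell}$ and write $p_{\Z}(s)=\bar x_{\Z}+\pi_{\Z}^{(\ell)}(s)$, where $\pi_{\Z}^{(\ell)}(s)$ gathers the humps at positions $\ell'\in I(s)\setminus\{\ell\}$. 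By \eqref{exdec} and the gap condition in \eqref{zndef}, $\|\pi_{n}^{(\ell)}(s)\|\lesssim e^{-\alpha(N/2)}\cdot e^{-\alpha|n|/2}$ uniformly in $s$ and $\ell$; the same splitting of the exponential applies to the off-diagonal contributions in $v_{\Z}(s,\tau)-\tau_{\ell}\beta^{-\ell}u_{\Z}$. Taylor expanding $F$ in state and parameter around $(\bar x_{\Z},\bar\lambda)$, using $F(\bar x_{\Z},\bar\lambda)=0$ and $D_{x}F(\bar x_{\Z},\bar\lambda)u_{\Z}=0$, the linear-in-$\tau_{\ell}$ term drops out; pairing with $w_{\Z}$ and invoking \eqref{cdef} produces the leading terms $c_{\lambda}(\lambda-\bar\lambda)+c_{x}\tau_{\ell}^{2}$. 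The quadratic, cubic and cross remainders of the Taylor series, together with the a priori bound $\|x_{\Z,s}(\tau,\lambda)\|_{\infty}\lesssim |\lambda-\bar\lambda|+\|\tau\|_{\infty}^{2}+e^{-\alpha N/2}$ supplied by the implicit-function step in Theorem \ref{Th1.1}, yield exactly the estimate \eqref{bifest}.

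The main obstacle I anticipate is making the off-diagonal and interaction bounds uniform in both $s\in\Omega_{N}$ and $\ell\in I(s)$, since $I(s)$ may be infinite. The trick I would rely on is the factorization $e^{-\alpha|\ell-\ell'|}=e^{-\alpha|\ell-\ell'|/2}\cdot e^{-\alpha|\ell-\ell'|/2}$: the first factor absorbs into a geometric series over $\ell'\in I(s)\setminus\{\ell\}$ using $|\ell-\ell'|\ge N$, while the second factor produces the uniform prefactor $e^{-\alpha N/2}$ appearing in \eqref{bifest}. Combined with the hump-wise Taylor expansion above, this controls precisely the coupling between widely separated humps and closes the estimate.
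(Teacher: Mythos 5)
Your part (i) is essentially the paper's own argument: the equivariance of $p_\Z$, $v_\Z$, $w(s,\cdot)$, $F$ and the kernel conditions under $\beta$, combined with the uniqueness statement of the Reduction Theorem \ref{Th5.1}; that part is fine. For part (ii) you take a genuinely different route. The paper never expands $g_s$ directly: it builds an approximate solution by inserting $h_\ell=c_\lambda\lambda+c_x\tau_\ell^2$ plus a correction $(y_\Z,\gamma)$ obtained from a linear solve, shows the residual of $G_s$ there is $\OO(\e^{-\alpha N/2}+|\lambda|\|\tau\|_\infty+\lambda^2+\|\tau\|_\infty^3)$, and concludes from the Lipschitz stability estimate \eqref{Ginvest}. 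Your direct projection route is viable in principle, but note that in \eqref{defop} the residual $F$ at the solution equals $-w(s,g_s)$, so recovering $g_\ell$ as an inner product requires inverting the near-identity Gram matrix of the shifted $w_\Z$'s, and the sign matters: since $F_\lambda=-f_\lambda$ and $F_{xx}=-f_{xx}$ in \eqref{Fdef}, you need $g_\ell\approx-\langle\beta^{-\ell}w_\Z,F\rangle$ to land on $+c_\lambda(\lambda-\bar\lambda)+c_x\tau_\ell^2$.

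The genuine gap is the term of the Taylor expansion that is \emph{linear} in the correction $x_{\Z,s}$. After pairing with $w_\Z$ it reads $\langle w_\Z, D_xF(\bar x_\Z,\bar\lambda)\beta^{\ell}x_{\Z,s}(\tau,\lambda)\rangle$ and is a priori only $\OO(\|x_{\Z,s}\|_\infty)=\OO(|\lambda-\bar\lambda|+\|\tau\|_\infty^2+\e^{-\alpha N/2})$ --- the same order as the leading terms you want to isolate, and not absorbed by the right-hand side of \eqref{bifest}. You record only the cancellation $D_xF(\bar x_\Z,\bar\lambda)u_\Z=0$, which handles the $v_\Z$ contribution, and then file $x_{\Z,s}$ under ``quadratic, cubic and cross remainders''; as written the estimate does not close. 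The missing ingredient is the cokernel characterization \eqref{rangechar}: $\langle w_\Z,\cdot\rangle$ annihilates $\range(D_xF(\bar x_\Z,\bar\lambda))$, so the linear term vanishes exactly up to the commutator $\langle w_\Z,(D_xF(\beta^{\ell}p_\Z(s),\bar\lambda)-D_xF(\bar x_\Z,\bar\lambda))\beta^{\ell}x_{\Z,s}\rangle=\OO(\e^{-\alpha N/2}\|x_{\Z,s}\|_\infty)$, which is admissible. This is exactly where the paper invokes \eqref{rangechar} to obtain the improved estimate \eqref{gammaimprove}. A smaller slip: the claimed pointwise bound $\|\pi_n^{(\ell)}(s)\|\lesssim\e^{-\alpha N/2}\e^{-\alpha|n|/2}$ is false for $|n|$ comparable to the hump separation (take $n=\ell'-\ell$, giving $\|\bar x_0\|=\OO(1)$); what your split-exponential trick actually yields, and what suffices, is the bound after weighting with the exponentially decaying $w_\Z$ and summing over $n$.
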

\begin{remark} In order to apply unfolding theory to the bifurcation
equations \eqref{1.21} one also needs estimates of
derivatives in \eqref{bifest}. Though the functions $g_{s}(\cdot,\cdot)$
come out smoothly from Theorem \ref{Th5.1}, estimating their derivatives
seems to be quite involved (cf.\ the proof of \eqref{bifest} in Section \ref{S6}) and has not yet been done.
\end{remark}  
If we consider a homoclinic symbol $s \in \Omega_N$ with $K =
\card(I(s)) < \infty$ humps, then the theorem shows that the
bifurcation equations are small perturbations of a set of $K$
identical turning point equations
$$
0 = c_\lambda (\lambda - \bar \lambda) + c_x \tau_\ell^2,\quad \ell
\in I(s).
$$
If $c_{\lambda},c_x\neq 0$ one can shift $\bar \lambda$ to zero and scale $\lambda$ and
$\tau_\ell$ such that one obtains a set of hilltop bifurcations of
order $K$, cf.\ \cite[Ch. IX, \S3]{gs85},
\begin{equation} \label{hilltop2}
0 = \lambda - \tau^2_\ell,\quad \ell \in I(s).
\end{equation}
For two-humped orbits the set $I(s)$ contains two elements and the solution
curves of \eqref{hilltop2} are shown in Figure \ref{fig5}. 
This case will be crucial for understanding the global behavior of 
homoclinic curves in the next sections.


\section{Homoclinic orbits and their continuation}
\label{S3}
A typical example, which plays the role of a normal form for quadratic
two-dimensional mappings, is the famous H\'enon map, cf.\ 
\cite{he76, mi87, de89, hk91} which is defined as
$$
f(x,\lambda) = 
\begin{pmatrix}
1+x_2 - \lambda x_1^2\\
1.4 x_1  
\end{pmatrix}.
$$
This map has fixed points
$$
\xi_\pm(\lambda) = 
\begin{pmatrix}
\nu(\lambda)\\1.4 \nu(\lambda)  
\end{pmatrix}
,\quad\text{where }
\nu(\lambda) = \frac 1{5\lambda} \left( 1 \pm \sqrt{1 +
    25\lambda}\right), 
$$
and for $\tilde \lambda = 0.35$ a transversal 
homoclinic orbit $x_\Z(\tilde \lambda)$ 
w.r.t.\ the fixed point $\xi_+(\tilde \lambda)$ exists, satisfying
Assumption \textbf{A4}. 

For numerical computations, we approximate an infinite homoclinic orbit 
$x_\Z(\tilde \lambda)$  
by a finite orbit segment $x_J$, where $J=[n_-,n_+]\cap \Z$.
The segment is determined as 
a zero of the boundary value operator 
$$
\Gamma_J(x_J,\tilde \lambda) = 
\begin{pmatrix}
x_{n+1} - f(x_n,\tilde \lambda),& \quad n=n_-,\dots,n_+-1\\
b(x_{n_-},x_{n_+})  
\end{pmatrix}.
$$
Here $b:\R^{2k}\to \R^k$ defines a boundary condition, for example 
$$
b_{\text{per}}(x,y) = x-y,\quad\text{or}\quad 
b_{\text{proj}}(x,y) =
\begin{pmatrix}
B_s (x - \bar \xi)\\
B_u (x-\bar \xi)   
\end{pmatrix},
$$
in case of periodic and projection boundary conditions, where $B_s$
and $B_u$ yield linear approximations of the stable and the
unstable manifold. Due to our hyperbolicity assumption,
$\Gamma_J(\cdot,\tilde \lambda)$ has for $J$ sufficiently large 
a unique zero in a neighborhood of the exact solution. Moreover 
approximation errors decay at an exponential rate
that depends on the type of boundary condition, cf.\ \cite{bk97b}. 

For H\'enon's map, we solve the corresponding boundary value problem,
obtain in this way an approximation of $x_\Z(\tilde \lambda)$ and continue this
orbit w.r.t.\ the parameter $\lambda$, using the method of pseudo arclength
continuation, cf.\ \cite{k77, ag90, go00b}.
In Figure \ref{fig1}, we plot the amplitude of these orbits 
$\text{amp}(x_J(\lambda)) := \left(\sum_{n\in J}\|x_n(\lambda) -
  \xi_+(\lambda)\|^2\right)^{\frac 12}$
versus the parameter. 

\begin{figure}[hbt]
  \begin{center}
   \psfrag{l0}{$0$}
   \psfrag{l1}{$1$}
   \psfrag{l2}{$2$}
   \psfrag{l3}{$3$}
   \psfrag{l30}{$\ell_{3,0} = (x_\Z^{3,0},\lambda^{3,0})$}
   \psfrag{l12}{$\ell_{1,2} = (x_\Z^{1,2},\lambda^{1,2})$}
   \psfrag{r01}{$r_{0,1} = (x_\Z^{0,1},\lambda^{0,1})$}
   \psfrag{r23}{$r_{2,3} = (x_\Z^{2,3},\lambda^{2,3})$}
   \psfrag{la}{$\lambda$}
   \psfrag{amp}{$\text{amp}$} 
   \epsfig{width=12cm, file=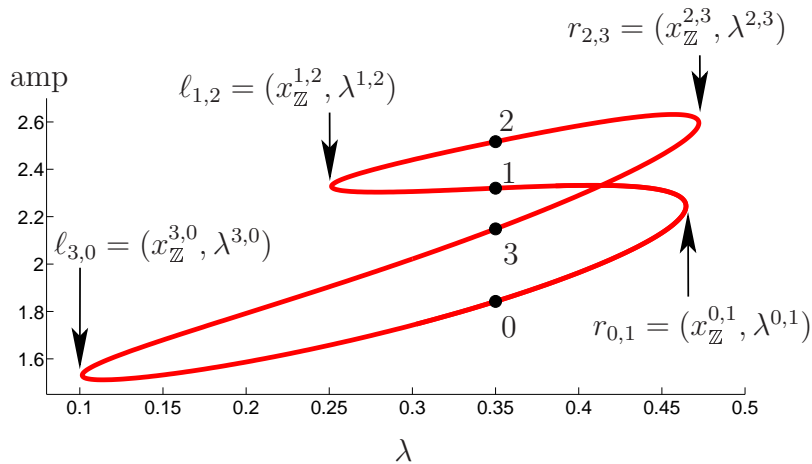}
  \end{center}
  \caption{\textit{Continuation of homoclinic H\'enon orbits. 
    At the parameter $\tilde \lambda = 0.35$ four distinct orbits
    $x_\Z^{i}$, $i\in\{0,\dots,3\}$ exist that turn into each
    other via left or right turning points. \label{fig1}}}
\end{figure}

At the value $\tilde \lambda = 0.35$ four distinct homoclinic orbits
occur that we denote by $x_\Z^{i}$,  $i\in\{0,\dots,3\}$. We choose their
index by following the order given by the continuation routine. The
orbit $x_\Z^0$ is shown in Figure \ref{fig2} together with parts of
the stable and the unstable manifold of the fixed point $\xi_+(\tilde
\lambda)$. The enlargement in this figure shows where the four orbits
lie in the intersection of manifolds.

\begin{figure}[hbt]
  \begin{center}
   \psfrag{l0}{$0$}
   \psfrag{l1}{$1$}
   \psfrag{l2}{$2$}
   \psfrag{l3}{$3$}
   \psfrag{xi}{$\xi_+$}
   \psfrag{x0}{$x_\Z^0$}
  \epsfig{width=14cm, file=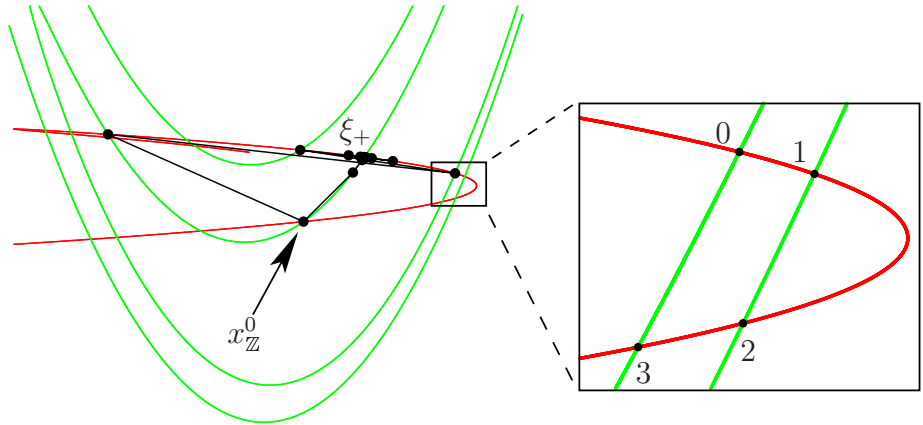}
  \end{center}
  \caption{\textit{Primary homoclinic orbit $x_\Z^0$ w.r.t.\ the fixed
      point $\xi_+$, 
    and parts of the stable manifold (green) and  
    the unstable manifold (red). The enlargement shows the
    intersections of manifolds that lead to the four homoclinic orbits
    in Figure  \ref{fig1}. \label{fig2}}}
\end{figure}

At each turning point, two orbits collide;  
with $r$ and $\ell$, we distinguish right and left turning points.
Figure \ref{fig3}
illustrates intersections of stable and unstable manifolds at these
four turning points.

\begin{figure}[hbt]
  \begin{center}
   \psfrag{l30}{$\ell_{3,0}$}
   \psfrag{l12}{$\ell_{1,2}$}
   \psfrag{r01}{$r_{0,1}$}
   \psfrag{r23}{$r_{2,3}$} 
   \psfrag{0}{$0$}
   \psfrag{1}{$1$}
   \psfrag{2}{$2$}
   \psfrag{3}{$3$}
    \psfrag{0,1}{$0,1$}
   \psfrag{1,2}{$1,2$}
   \psfrag{2,3}{$2,3$}
   \psfrag{3,0}{$3,0$}
  \epsfig{width=14cm, file=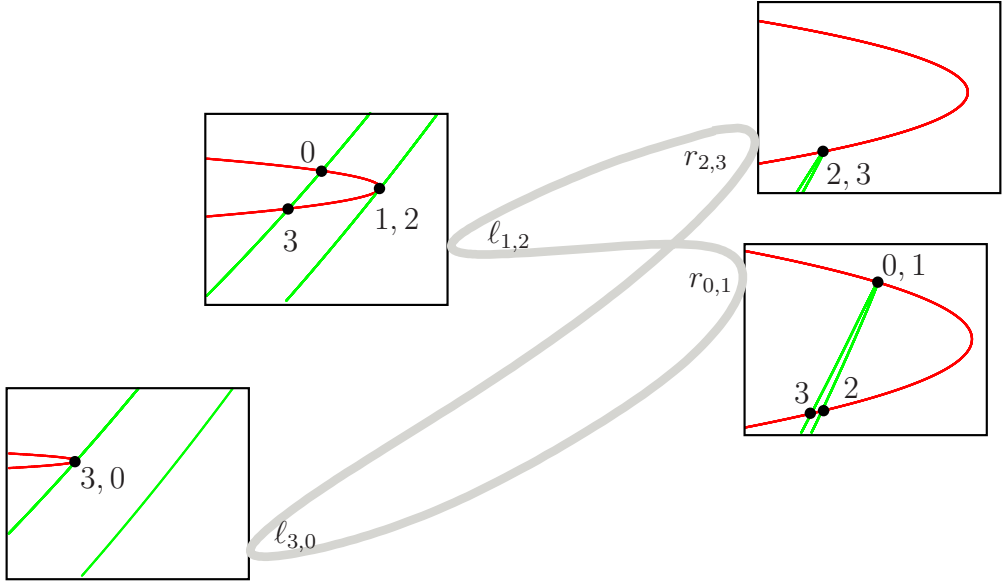}
  \end{center}
  \caption{\textit{Intersections of stable and unstable manifolds 
    at the four turning points in the
    cutout region from Figure \ref{fig2}.\label{fig3}}}
\end{figure}

Errors of turning point calculations for finite approximations of
homoclinic orbits decay exponentially fast w.r.t.\ the length of the
computed orbit segment, cf.\ \cite[Theorem 5.1.1]{kl97}.

\section{Connected components of multi-humped orbits}
\label{S4}
For H\'enon's map, we find 
four distinct transversal homoclinic orbits $x_\Z^s$, $s\in
\{0,\dots,3\}$ at $\tilde \lambda = 0.35$ and we identify $x_\Z(s)$
with its symbol $s$. Note that the orbits 
$0,1,2,3,0$ pass into each other via left (L) and right (R) turning points 
$r_{0,1}$, $\ell_{1,2}$, $r_{2,3}$, $\ell_{3,0}$, see Figure \ref{fig3}. 
The graph in Figure \ref{fig10} gives an alternative illustration 
of these transitions.

\begin{figure}[H]
  \begin{center}
    \psfrag{0}{$0$}
    \psfrag{1}{$1$}
    \psfrag{2}{$2$}
    \psfrag{3}{$3$}
    \psfrag{R}{$R$}
    \psfrag{L}{$L$}
    \epsfig{width=6cm, file=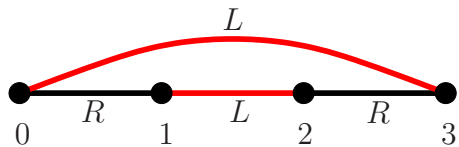}
  \end{center}
  \caption{\textit{Transition graph for one-humped orbits}.\label{fig10}}
\end{figure}

For the
construction of an $n$-humped orbit, we choose a sufficiently long
interval $J=[n_-,n_+]$ around zero and a sequence $s\in \Sn{n} :=
\{0,\dots,3\}^n$. 
We define the pseudo orbit
\begin{equation}\label{pseudo2}
\tilde x_\Z[s] := x_{(-\infty,n_+]}^{s_1} x_J^{s_2} \dots x_J^{s_{n-1}}
x_{[n_-,\infty)}^{s_n},
\end{equation}
see Figure \ref{fig4}.
Since the collection of single orbits $x_\Z^r$, $r\in\{0,\dots,3\}$
forms a hyperbolic set, the 
Shadowing-Lemma, cf.\ \cite{py99, p00} shows that the 
pseudo orbit $\tilde x_\Z(s)$ lies close to a true $n$-humped
$f$-orbit which we denote by $x_\Z(s)$.  
In $\Sn{n}$ there are $4^n$ different symbols and thus 
we expect to find $4^n$ different $n$-humped orbits $x_\Z(s)$.  
We identify these orbits with their symbol.

Note that the construction of pseudo orbits in \eqref{pseudo2}
slightly differs from \eqref{1.16}, where we add up shifted orbits. 
With both approaches, we expect to find the same shadowing orbit for
sufficiently large intervals $J$.

\begin{figure}[H]
  \begin{center}
  \psfrag{x0}{$x_\Z^0$}
   \psfrag{x1}{$x_\Z^1$}
   \psfrag{x2}{$x_\Z^2$}
   \psfrag{x3}{$x_\Z^3$}
   \psfrag{x01}{$x_\Z[01]$}
   \psfrag{x123}{$x_\Z[123]$}
  \epsfig{width=14cm, file=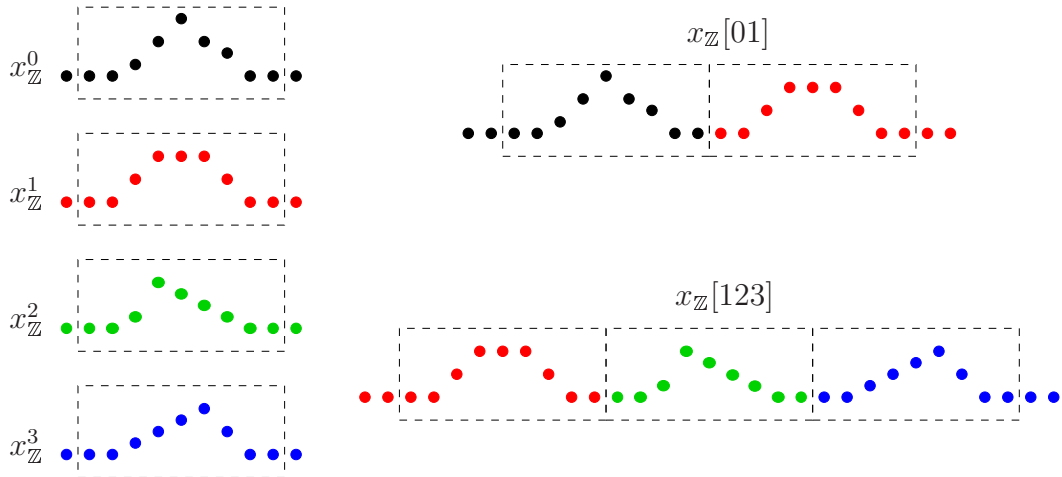}
  \end{center}
  \caption{\textit{Construction of multi-humped orbits.\label{fig4}}}
\end{figure}

Given two symbols $s,\bar s \in \Sn{n}$,
we analyze whether the $n$-humped
orbits $x_\Z(s)$ and $x_\Z(\bar s)$ can turn into each other via
continuation. 

Let us first look at the two-humped case.

\subsection{Bifurcation of two-humped orbits}\label{S41}
The continuation of two-humped orbits exhibits three closed curves of
homoclinic orbits, cf.\ Figure \ref{fig6a}, and at the parameter value
$\tilde \lambda$, $16$ different homoclinic orbits $x_\Z(s)$, $s\in
\Sn{2}$ exist.

\begin{figure}[hbt]
  \begin{center}
    \psfrag{l}{$\lambda$}
    \psfrag{amp}{amp}
    \epsfig{width=14cm, file=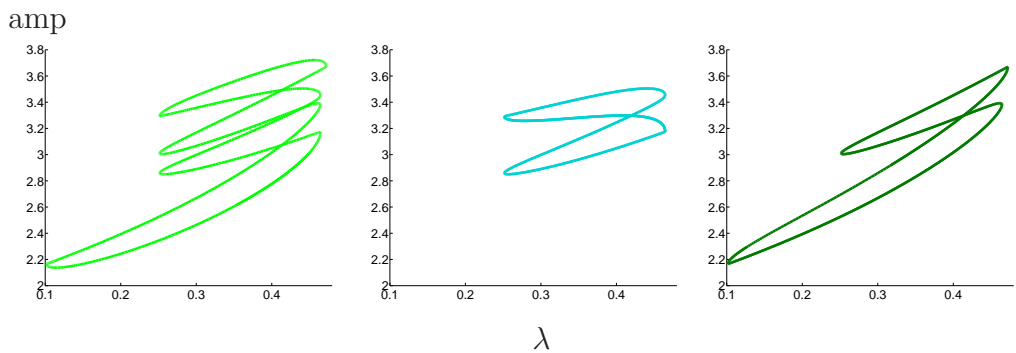}
  \end{center}
  \caption{\textit{Continuation of two-humped orbits of length $n_- =
      -20$, $n_+ = 21$. \label{fig6a}}}
\end{figure}

One observes that at each turning point in Figure \ref{fig6a}, exactly
one component of the symbol changes. For example, the symbol $(1,1)$
changes at a left turning point into the symbol $(2,1)$. Due to the
combination of humps with finite length, a perturbed hilltop bifurcation
\textit{decides}, whether $(1,1)$ bifurcates into the symbol $(2,1)$ or into
$(1,2)$, see Figure \ref{fig5}.  

For two-humped orbits, the system \eqref{hilltop2}
is a set of two equations in three variables $\lambda,
\tau_1, \tau_2$, called the hilltop normal form, cf.\ \cite{gs85}
\begin{equation} \label{hilltop2rep}
\lambda = \tau_1^2,\quad \lambda = \tau_2^2.
\end{equation}
Figure \ref{fig5} (left) shows the solution curves of \eqref{hilltop2rep}
while the red curves in  Figure \ref{fig5} (right) indicate
 the generic solution picture of a perturbed equation. 
Here we neglect more detailed bifurcation diagrams
which take into account small 
hysteresis effects w.r.t.\ the parameter $\lambda$,
see  \cite[Ch. IX, \S3]{gs85} for the unfolding theory.

\begin{figure}[hbt]
  \begin{center}
    \psfrag{l2}{2}
    \psfrag{l1}{1}
    \psfrag{a}{$(2,1)$}
    \psfrag{b}{$(1,1)$}
    \psfrag{c}{$(1,2)$}
    \psfrag{d}{$(2,2)$}
    \psfrag{t1}{$\tau_1+\tau_2$}
    \psfrag{t2}{$\tau_1-\tau_2$}
    \psfrag{l}{$\lambda$}
    \epsfig{width=12cm, file=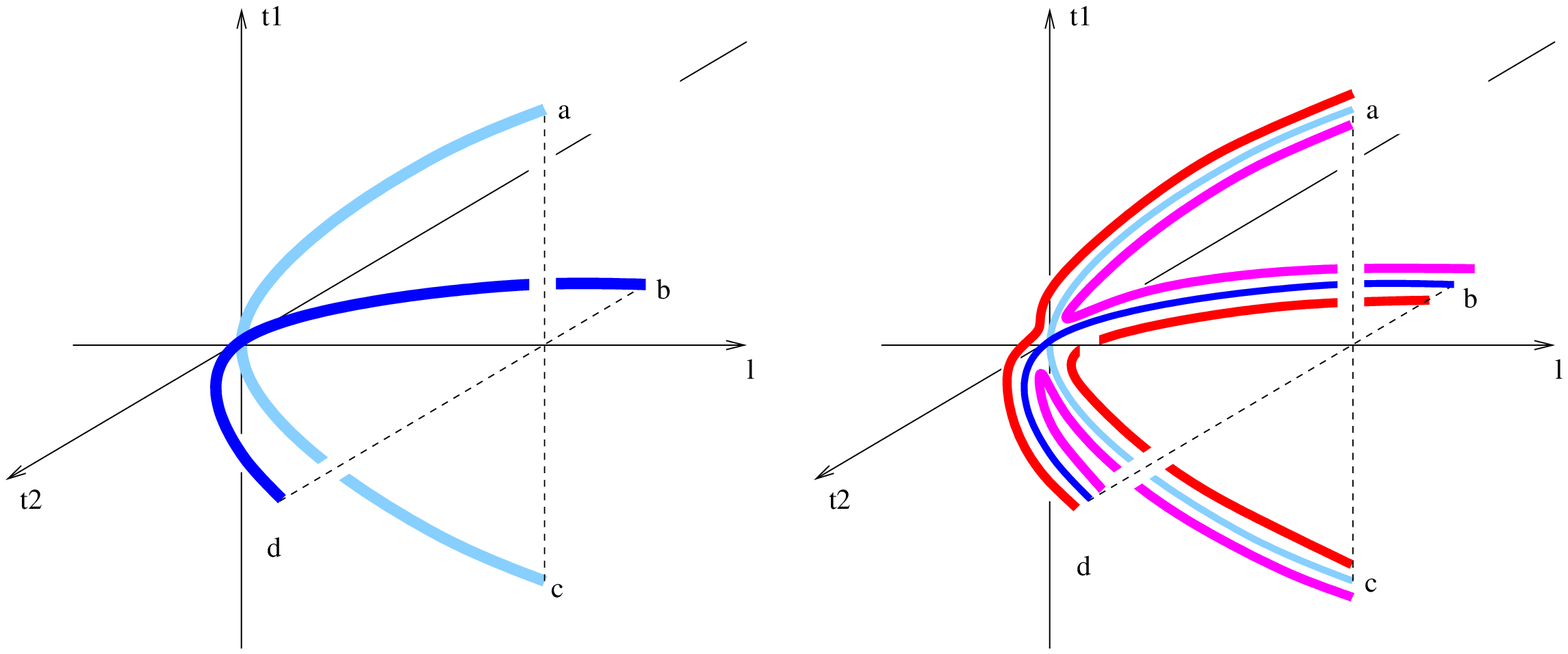}
  \end{center}
  \caption{\textit{Unperturbed (left) and perturbed
      hilltop-bifurcation (right) at the turning point 
      $\ell_{1,2}$.\label{fig5}}}
\end{figure}

\subsection{Connected components and equivalent symbols }
Homoclinic orbits that lie on a common 
closed curve define a connected component of 
$$
\mathcal{H} := \big\{(y_\Z,\lambda)\in \ell^\infty(\R^k)\times \R:
y_{n+1} = f(y_n,\lambda)\ \forall n \in \Z, \lim_{n\to \pm \infty} y_n
= \xi(\lambda)\big\}.
$$
More precisely, let  $s \in \Sn{n}$ and denote 
by $C(s)\subset \mathcal{H}$ 
the connected component that satisfies $(x_\Z(s),\tilde \lambda) \in
C(s)$.  

Then, we obtain an equivalence relation by 
identifying two sequences $s, \bar s \in \Sn{n}$, if the
corresponding orbits lie in the same component i.e.\
\begin{equation}\label{eq}
s\cong \bar s \quad  \Leftrightarrow \quad (x_\Z(\bar s),\tilde
\lambda) \in C(s). 
\end{equation}

In the following, we discuss how to find 
these equivalence classes. Particularly, we show
under some generic assumptions that each equivalence class has at
least four elements, and for $n$-humped orbits it turns out that one class
has at least $4n$ elements. 

For this task, we introduce a labeled graph $G$ with vertices
$s\in\Sn{n}$. Two vertices $s$ and $\bar s \in \Sn{n}$ are connected
with an $L$ or $R$-edge, if $x_\Z(s)$ bifurcates into $x_\Z(\bar s)$
via a left or right turning point. Since we do not know the effect
of the perturbed hilltop bifurcation a priori, we put an
edge, if the transition is possible for at least one perturbation.
For example, the
vertices $(1,1)$ and $(2,1)$ as well as $(1,1)$ and $(1,2)$ are
connected with $L$-edges in case $n=2$, see Section \ref{S41}.
Precise rules for constructing this graph are stated in Section
\ref{S43}.

Our hypothesis is that the desired equivalence classes
correspond to \textit{a special} decomposition of this graph
into disjoint $LR$-cycles.

In case of one-humped orbits, the only $LR$-cycle is $01230$, see
Figure \ref{fig10}. Consequently, all symbols lie in the same
equivalence class, which  matches the fact that all one-humped orbits lie
on the same closed curve and thus, in the same connected component of
$\mathcal{H}$. 

\subsection{Graph structure of homoclinic network}\label{S43}
In this section, we give precise rules for defining the labeled graph
$G$ which we identify with its adjacency tensor with entries $R$ and $L$.

First, we assume that only one of the $n$ humps can turn into a
neighboring hump at a turning point.
\begin{itemize}
\item [\textbf{R1}] There is no edge from $s\in \Sn{n}$ to $\bar s \in
  \Sn{n}$ if
$$
s = \bar s \quad \text{or}\quad 
\|s-\bar s\|_1 = \sum_{i=1}^nd(s_i,\bar s_i)  \ge 2,
$$
where $d$ is the distance on the cycle $01230$.
\end{itemize}

Now let $s, \bar s \in \Sn{n}$ and assume $\|s-\bar s\|_1 = 1$, 
then there exists 
a unique $j$ such that $s_j \neq \bar s_j$.

From $\lambda^{2,3} > \lambda^{0,1}$ we conclude that  
the right transition at $r_{2,3}$ can only occur   
if the orbit contains no $0$ and no $1$ hump. Therefore, we define $R$-edges in
$G$ according to the following rule.
\begin{itemize}
\item [\textbf{R2}] $G(s,\bar s) = R$ if
  \begin{itemize}
  \item [$\bullet$] $\{s_j, \bar s_j\} = \{0, 1\}$,
  \item [$\bullet$] $\{s_j, \bar s_j\} = \{2, 3\}$ and 
    $s_i \in \{2,3\}$ for all $i = 1,\dots,n$.
  \end{itemize}
\end{itemize}

Similarly from $\lambda^{1,2} > \lambda^{3,0}$ we conclude that the
left transition at $\ell_{3,0}$ can only occur if the orbit contains no
 $1$ and no $2$ hump. Our rules for $L$-edges are:
\begin{itemize}
\item [\textbf{R3}] $G(s,\bar s) = L$ if
  \begin{itemize}
  \item [$\bullet$] $\{s_j, \bar s_j\} = \{1, 2\}$,
  \item [$\bullet$] $\{s_j, \bar s_j\} = \{0, 3\}$ and 
    $s_i \in \{0,3\}$ for all $i = 1,\dots,n$.
  \end{itemize}
\end{itemize}

We expect a connected component to correspond to an $LR$-cycle in this
graph, i.e. a cycle on which $L$ and $R$-edges alternate. A precise statement
of our hypothesis is as follows. 

\begin{hyp}\label{hyp1}
The connected components of $n$-humped orbits and thus the
equivalence classes \eqref{eq} are in one to one correspondence to a 
partition of $G$ into disjoint $LR$-cycles.  
\end{hyp}

In case $n=2$, the $L$ and $R$-edges are shown in the left and center
picture of Figure \ref{fig6}. The right diagram additionally shows the
$LR$-cycles that correspond to the connected components from Figure
\ref{fig6a}. 

Similar diagrams for $n=3$ are shown in Figure \ref{fig7}.

\begin{figure}[hbt]
  \begin{center}
    \psfrag{l0}{$0$}
    \psfrag{l1}{$1$}
    \psfrag{l2}{$2$}
    \psfrag{l3}{$3$}
    \epsfig{width=15cm, file=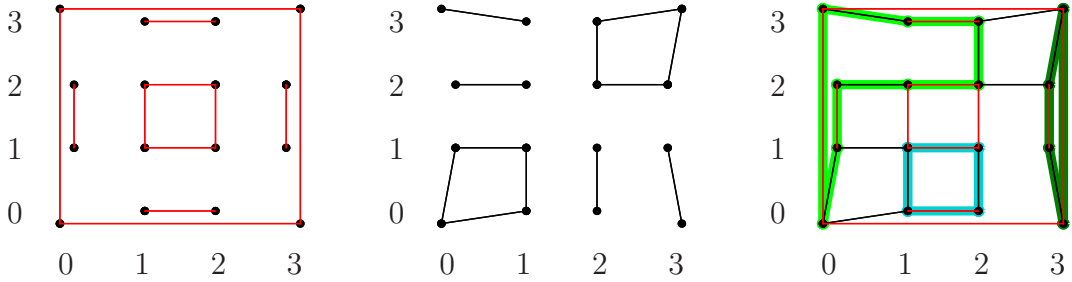}
  \end{center}
  \caption{\textit{$L$-edges of $G$ (left) and $R$-edges (center) for
      two-humped orbits. The
      cycles in the right figure correspond to the closed curves that are
      computed numerically in Figure \ref{fig6a}.  \label{fig6}}}
\end{figure}

\begin{figure}[hbt]
  \begin{center}
    \psfrag{000}{$000$}
    \psfrag{130}{$130$}
    \psfrag{430}{$430$}
    \psfrag{203}{$203$}
    \epsfig{width=14cm, file=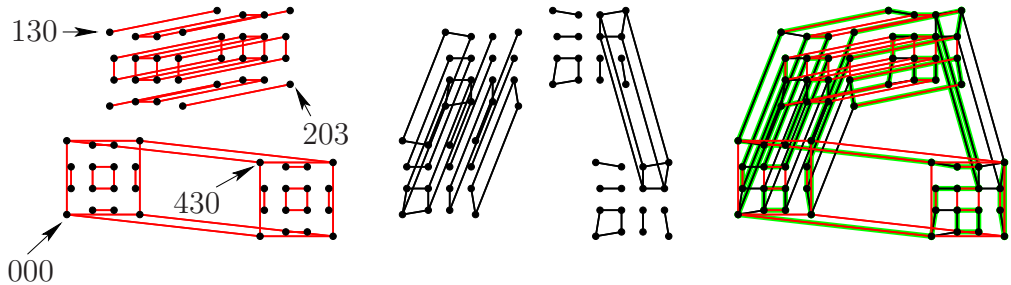}
  \end{center}
  \caption{\textit{$L$-edges of $G$ (left) and $R$-edges (center) for
      three-humped orbits. The
      green lines in the right figure show the cycles that are
      computed numerically.  \label{fig7}}}
\end{figure}

We continued $n$-humped orbits numerically for H\'enon's map up to
$n=5$. Table \ref{tab1} summarizes the number of cycles and their lengths
found in the computation.

\begin{table}[hbt]
\begin{center}
\begin{tabular}{c||c|c|c|c|c}
 &\multicolumn{5}{c}{length of cycle}\\
$n$ & 4 & 8 & 12 & 16 &20 \\\hline\hline
1& 1 \\\hline
2& 2 & 1\\\hline
3& 9 & 2 & 1\\\hline
4& 45 & 3 & 3 & 1\\\hline
5& 205 & 6 & 6 & 4 &1
\end{tabular}
\end{center}
 \caption{\textit{Continuation of $n$-humped orbits -- length of
     occurring cycles in numerical experiments. \label{tab1}}} 
\end{table}

These experiments for $n$-humped orbits of H\'enon's map 
suggest that the length of
occurring cycles is a multiple of $4$. Furthermore one cycle exists of
at least length $4n$.

\begin{theorem}\label{P1}
Fix $n \in \N$ and assume that Hypothesis \ref{hyp1} holds true.
Then, all $n$-humped orbits 
lie on  cycles whose length is a multiple of $4$. 

For the specific symbols
$s^0 = (0,\dots,0)$ and $s^2=(2,\dots,2) \in \Sn{n}$, the
corresponding orbits 
$x_\Z(s^0)$ and $x_\Z(s^2)$ lie on a common cycle 
of at least length $4n$. 
\end{theorem}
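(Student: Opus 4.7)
Every edge in $G$ changes a single coordinate according to one of the four admissible types $R01$, $R23$, $L12$, $L03$ allowed by rules \textbf{R2}--\textbf{R3}. For the divisibility claim I introduce two integer invariants
\[
\mu(s)=\card\{i: s_i\in\{1,2\}\},\qquad \rho(s)=\card\{i: s_i\in\{2,3\}\}.
\]
Each $R$-edge keeps $\rho$ fixed and changes $\mu$ by $\pm 1$, while each $L$-edge keeps $\mu$ fixed and changes $\rho$ by $\pm 1$. Around any closed cycle both invariants return to their initial values, so the numbers of $R$- and $L$-edges are each even. In an $LR$-cycle these two counts both equal half the length, so the length is a multiple of $4$.

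For the second statement I first exhibit an explicit $LR$-cycle of length $4n$ passing through $s^0$ and $s^2$. The \emph{forward half} processes coordinates $i=1,\dots,n$ in order, pushing the $i$-th coordinate from $0$ to $2$ via $0\xrightarrow{R01}1\xrightarrow{L12}2$; this contributes $2n$ alternating edges $R,L,R,L,\dots,R,L$ and ends at $s^2$. The \emph{backward half} starts with $R23$ on coordinate~$1$ (admissible at $s^2$ since every coordinate lies in $\{2,3\}$), then undoes coordinates $i=n,n-1,\dots,2$ via $2\xrightarrow{L12}1\xrightarrow{R01}0$, and finishes with $L03$ on coordinate~$1$ at $(3,0,\dots,0)$ (admissible since every coordinate is then in $\{0,3\}$), returning to $s^0$. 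The labels alternate throughout, no vertex is revisited (coordinate~$1$ stays in $\{0,1,2\}$ during the forward half and equals $3$ during the backward half except at the endpoints), and the total edge count is $2n+1+2(n-1)+1=4n$.

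It remains to argue that this graph cycle is the partition cycle of Hypothesis~\ref{hyp1} that contains $s^0$, and therefore also contains $s^2$. Here I plan to combine a local forcing argument with the parity identities from the first part. At any vertex with exactly one coordinate in $\{1,2\}$ the unique admissible $L$-edge is $L12$ on that coordinate, so for instance the $L12$-edge joining $(1,0,\dots,0)$ and $(2,0,\dots,0)$ must appear in every valid $LR$-partition; the $R$-analogue at vertices with exactly one coordinate in $\{0,1\}$ is symmetric. Iterating this forcing, the partition cycle through $s^0$ must successively lift each coordinate through the value $2$; combined with the parity identity from Part~1 and the admissibility constraints on $R23$ and $L03$ this leaves no choice but to pass through $s^2$ and return through a vertex of type $(3,0,\dots,0)$. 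Consequently $s^2\in C(s^0)$ and $\mathrm{length}(C(s^0))\ge 4n$.

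The principal obstacle I foresee is making this iterated forcing globally rigorous for arbitrary $n$. At vertices where several coordinates already lie in $\{1,2\}$ the $L$-edge in the partition is genuinely non-unique in the graph, so Hypothesis~\ref{hyp1} does not a priori single out one admissible partition. A cleaner but heavier alternative is to show that in \emph{every} admissible $LR$-partition of $G$ the cycle containing $s^0$ also contains $s^2$ and has length at least $4n$; this reduces to a finite case analysis of the free matchings within the sub-cubes $\{0,1\}^n$ and $\{2,3\}^n$, where rules \textbf{R2}--\textbf{R3} leave the most flexibility, constrained by the parity identities of Part~1. This combinatorial case analysis, rather than the explicit construction, is where I expect the technical weight of the proof to concentrate.
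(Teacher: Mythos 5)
Your parts (i) and (ii) are essentially fine. The invariant argument with $\mu(s)=\card\{i:s_i\in\{1,2\}\}$ and $\rho(s)=\card\{i:s_i\in\{2,3\}\}$ is a clean substitute for the paper's per-coordinate count of $L$-edges (the paper counts, for each $j$, the number $l_j$ of $L$-edges acting on coordinate $j$ and argues each $l_j$ is even), and your explicit length-$4n$ cycle through $s^0$ and $s^2$ is the same one the paper constructs. You should still state explicitly why any cycle containing both $s^0$ and $s^2$ has length at least $4n$ (their graph distance is $2n$, since each coordinate must change at least twice in each direction), but that is a one-line addition.

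The genuine gap is your step (iii). The theorem needs that in \emph{every} admissible $LR$-partition the cycle containing $s^0$ also contains $s^2$, and you leave this open: you concede that the local forcing fails once two coordinates lie in $\{1,2\}$, and the proposed ``finite case analysis of free matchings'' is neither carried out nor obviously uniform in $n$. The missing idea is a global obstruction on where the symbol $3$ can first appear. By \textbf{R2}--\textbf{R3}, a coordinate can reach the value $3$ only via an $R23$ edge at a vertex with \emph{all} coordinates in $\{2,3\}$, or via an $L03$ edge at a vertex with \emph{all} coordinates in $\{0,3\}$; if the vertex currently has no coordinate equal to $3$, these conditions force it to be $s^2$ or $s^0$, respectively. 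Now suppose some partition cycle contains $s^2$ but not $s^0$. Every $R$-edge at $s^2$ lands in $V_3=\{s:\exists j,\ s_j=3\}$, so the cycle must return to $V_3$; traversing it from $s^2$ in the $L$-direction, the first edge is an $L12$ producing a $1$ and no $3$, and the path must create a $3$ before reaching $V_3$ without revisiting $s^2$ -- which by the observation above can only happen at the excluded vertex $s^0$. This contradiction shows the unique partition cycle through $s^2$ contains $s^0$, hence $C(s^0)=C(s^2)$. Without an argument of this type your proposal establishes the divisibility claim and the existence of \emph{some} length-$4n$ cycle in $G$, but not the second assertion of the theorem.
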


\begin{remark}
Table \ref{tab1} shows that there is exactly one orbit of length $4n$
and all other orbits are shorter. Hence, the orbit of length $4n$
contains $s^0$ and $s^2$.  
\end{remark}

\begin{proof}
By assuming Hypothesis \ref{hyp1} we see that it suffices to analyze
$LR$-cycles of the graph $G$. More precisely, 
we prove Theorem \ref{P1} along the following steps.
\begin{itemize}
\item [(i)] Each $LR$-cycle in $G$ has length $4m$, $m \ge 1$, $m \in \N$.
\item [(ii)] There exists an $LR$-cycle from $s^0$ to $s^2$ of length $4n$
  and each $LR$-cycle that contains $s^0$ and $s^2$ has at least length $4n$. 
\item [(iii)] Each $LR$-cycle that contains $s^0$ also contains $s^2$.
\end{itemize}

\begin{itemize}
\item [(i)] Let the vertices $v^1,\dots,v^m, v^{m+1}=v^1$ form an $LR$-cycle in
  $G$. 
Fix $j\in\{1,\dots,n\}$
  and let 
$
l_j = \#\big\{i\in\{1,\dots,m\} :v^i_j \neq v^{i+1}_j,\ G(v^i,v^{i+1}) =
L\big \}
$
be the number of $L$-edges for which the corresponding vertices only
differ in the $j$th component.

From \textbf{R1} it follows that $l_j$ is an even number and 
$ \max_{j=1,\dots,n} \{l_j\}\ge 2$, otherwise the cycle cannot be
closed in the $j$th component. Furthermore, an $LR$-cycle has the
same number of $L$ and $R$-edges. 
Thus, the cycle has length
 $$
  m=\sum_{i=1}^n 2l_i = 4 \sum_{i=1}^n \frac{l_i}2 = 4p
  \quad \text{with}\quad p = \sum_{i=1}^n \frac{l_i}2 \ge 1.
  $$
\item [(ii)] We explicitly construct an $LR$-cycle in $G$ from $s^0$ to $s^2$:
$(0\cdots 0) \mapsto (10\cdots 0)$ $ \mapsto (20\cdots 0) \mapsto (210\cdots
0)\mapsto (220\cdots 0)\mapsto \cdots \mapsto (2\cdots 2) \mapsto (32\cdots 2)
\mapsto (312\cdots 2)\mapsto (302\cdots 2)\mapsto (3012\cdots 2)\mapsto
(3002\cdots 2)\mapsto (30\cdots 0) \mapsto (0\cdots 0)$ which has length
$4n$.
Note that the distance from  $s^0$ to $s^2$ on the full quadratic grid
is $2n$ and consequently, each cycle containing these two points has at
least length $4n$. 
\item[(iii)] For proving that each $LR$-cycle with $s^0$ also contains
  $s^2$, assume that an $LR$-cycle exists that contains $s^2$ but not
  $s^0$. This cycle lies in the subgraph that we obtain by deleting 
  the vertex $s^0$ with its corresponding edges.

  Note that the $LR$-cycles start with an $L$ and end with an
  $R$-edge, and it follows from \textbf{R2} that all $R$-edges that
  start at $s^2$ end in  
  $$
  V_3:=\{s\in \Sn{n}:\ \exists
  j\in\{1,\dots,n\}:\ s_j=3\}.
  $$
  We define the graph $\tilde G$ by deleting the $R$-edges of
  $s^2$ from the remaining graph. Figure \ref{fig8} illustrates this
  construction in case $n=2$.

\begin{figure}[hbt]
  \begin{center}
    \psfrag{l0}{$0$}
    \psfrag{l1}{$1$}
    \psfrag{l2}{$2$}
    \psfrag{l3}{$3$}
    \psfrag{s0}{$s_0$}
    \psfrag{s2}{$s_2$}
    \epsfig{width=6cm, file=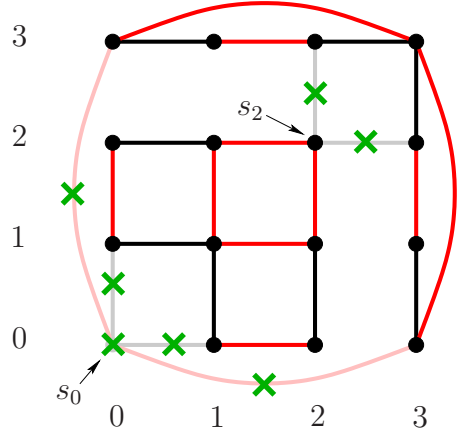}
  \end{center}
  \caption{\textit{Transition graph in case $n=2$. $L$-edges
      and $R$-edges are plotted in red and black, respectively. Vertices
      and edges that are deleted in the proof of Theorem 
      \ref{P1} are marked by green crosses. \label{fig8}}}
\end{figure}

If $\tilde G$ breaks into two components 
$V_3$ and $\tilde G\setminus V_3$, then we get a contradiction to the
above assumption and an
$LR$-cycle in the original graph $G$ that contains $s^2$ but not $s^0$
cannot exist.

To finish the proof, we show that an $LR$-path in $\tilde
G$ from $s^2$ to $V_3$ does not exist.

From $s^2$ we cannot go directly 
via an $R$-edge to $V_3$, since the corresponding
edges are deleted in $\tilde G$.
Thus without loss of generality, we get:
$(2\cdots 2) \mapsto (12\cdots 2) \mapsto (02\cdots 2)$.
Denote by $v^m$ the $m$th vertex on this path.
A $3$-component can only be achieved by the left transition $\ell_{3,0}$ or by
the right transition $r_{2,3}$, see \textbf{R2} and \textbf{R3}. 

If a $j$ exists with $v^m_j \in\{0,1\}$, then the $r_{2,3}$
transition is impossible by \textbf{R2}.

If a $j$ exists with $v^m_j \in\{1,2\}$,  then the $\ell_{3,0}$
transition is impossible by \textbf{R3}.

Thus, we only obtain a $3$ component via the vertex $s^0=(0\dots 0)$ which is
deleted in $\tilde G$.
\end{itemize}
\end{proof}

\section{Bifurcation analysis near homoclinic tangencies}
\label{S5}
In this section we prove  the main Theorem \ref{Th1.1} and Theorem \ref{Th1.3}(i)
by using an existence and uniqueness result for a suitable operator equation
in spaces of bounded sequences. In the following we use the notation
 $B_{\rho}(x)$ and $B_\rho = B_\rho(0)$ to denote closed balls of radius $\rho$ in some
Banach space.

\subsection{The operator equation}
\label{ss5.1}

First recall the operator $F:\ell^\infty(\R^k)\times \R \to
\ell^\infty(\R^k)$ from \eqref{Fdef} and the normalization 
$\bar{\lambda}=0$ and $\xi(\lambda)=0$ for $\lambda$ close to $\bar{\lambda}$,
see \textbf{A2}, \textbf{A3} in Section \ref{S2}.
Then for any $s \in \Omega_N$ define the operator 
\begin{equation*} 
G_s: \ell^\infty(\R^k) \times \ell^\infty(s)\times \ell^\infty(s)
\times \R \to \ell^\infty(\R^k) \times \ell^\infty(s)
\end{equation*}
by
\begin{equation} \label{defop}
G_s(x_\Z,g,\tau,\lambda)  =
\begin{pmatrix}
F(p_\Z(s)+x_\Z+v_\Z(s,\tau),\lambda) + w(s,g)\\
\langle \beta^{-\ell} u_\Z, x_\Z\rangle,\ \ell \in I(s)  
\end{pmatrix}.
\end{equation}
Here $p_\Z$, $v_\Z$ are defined in \eqref{1.16}, \eqref{1.17} and
$w(s,g)$ is given by (recall $w_{\Z}$ from \eqref{1.22})
$$
w(s,g) = \sum_{\ell \in I(s)} g_\ell \beta^{-\ell} w_\Z,\quad g \in
\ell^\infty(s).
$$
Our aim is to derive the functions $x_{\Z,s}$, $g_s$ in Theorem
\ref{Th1.1} by solving
\begin{equation}\label{5.4}
G_s(x_\Z,g,\tau,\lambda) = 0
\end{equation}
for $\|\tau\|_\infty$, $|\lambda|$ sufficiently small and for all 
$s \in \Omega_N$. 
 More precisely,
we prove in Section \ref{S6} the following \textit{Reduction Theorem}.
\begin{theorem}\label{Th5.1}
There exist constants  $C_0,\rho_x,\rho_g,\rho_{\tau},\rho_{\lambda} > 0$ 
and a number $N_0 \in \N$ such 
that for all $N\ge N_0$ and for all $s \in \Omega_N$
the following statements hold. 
For all $\tau \in B_{\rho_\tau}$, $\lambda \in B_{\rho_\lambda}$ the
system \eqref{5.4} has a unique solution
$$
g = g_s(\tau,\lambda) \in B_{\rho_g} \subset \ell^{\infty}(s), \quad 
x_\Z = x_{\Z,s}(\tau,\lambda) \in B_{\rho_x}\subset \ell^{\infty}(\R^k).
$$
Moreover, the following estimate is satisfied:
\begin{equation} \label{Ginvest}
\max(\|g-\tilde g\|_\infty, \|x_\Z-\tilde x_\Z\|_\infty) \le C_0
\|G_s(x_\Z,g,\tau,\lambda) - G_s(\tilde x_\Z,\tilde
g,\tau,\lambda)\|
\end{equation}
for all $g,\tilde g \in B_{\rho_g}$, $x_\Z,\tilde x_\Z \in
B_{\rho_x}$, $\tau \in B_{\rho_\tau}$, $\lambda \in B_{\rho_\lambda}$.
\end{theorem}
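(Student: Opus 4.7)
The plan is to apply a quantitative implicit function theorem (equivalently, a uniform Newton-Kantorovich / contraction argument) to $G_s$ at the base point $(x_\Z,g,\tau,\lambda)=(0,0,0,0)$. First I would bound the residual $G_s(0,0,0,0)$: its second component vanishes, and the first component is $F(p_\Z(s),0)$. Because $p_\Z(s)-\bar\xi$ is a sum of well-separated shifts $\beta^{-\ell}(\bar x_\Z-\bar\xi)$ with minimum spacing $N$, and each $\beta^{-\ell}\bar x_\Z$ is an exact $f(\cdot,\bar\lambda)$-orbit, the discrepancy $F(p_\Z(s),0)$ comes only from nonlinear cross-terms among distinct humps. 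Combining the exponential decay \eqref{exdec}, the uniform bound \eqref{pseudoest} and smoothness of $f$ from \textbf{A1}, one gets $\|F(p_\Z(s),0)\|_\infty \le C\,\e^{-\alpha N}$ uniformly in $s\in\Omega_N$.

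Next I would analyze the linearization $L_s:=D_{(x_\Z,g)}G_s(0,0,0,0)$, which sends $(y_\Z,h)$ to
\begin{equation*}
\bigl(y_{n+1}-f_x(p_n(s),0)\,y_n + w(s,h)_n,\ \langle\beta^{-\ell}u_\Z,y_\Z\rangle\bigr)_{n\in\Z,\ \ell\in I(s)}.
\end{equation*}
Near a hump at position $\ell\in I(s)$, the coefficients $f_x(p_n(s),0)$ agree with those of the variational equation \eqref{vareq} along the shifted orbit $\beta^{-\ell}\bar x_\Z$ up to an $\OO(\e^{-\alpha N})$ error, while far from all humps they equal the hyperbolic matrix $f_x(0,0)$ (by \textbf{A3}). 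By \textbf{B4}, the variational operator along $\bar x_\Z$ has one-dimensional kernel $\Span\{u_\Z\}$ and one-dimensional cokernel $\Span\{w_\Z\}$. The scalar unknowns $g_\ell$ and the orthogonality constraints $\langle\beta^{-\ell}u_\Z,\cdot\rangle=0$ are tailored precisely to kill this kernel and cokernel at each hump. A local-to-global construction, inverting on bounded hump-intervals by a Lyapunov-Schmidt reduction and on the connecting intervals by the exponential dichotomy at the hyperbolic fixed point, then yields an inverse $L_s^{-1}$ whose norm is bounded by a constant $C_0$ independent of $s$ and $N$, provided $N\ge N_0$ is large enough that the gluing errors of size $\OO(\e^{-\alpha N/2})$ are absorbed by a Neumann series.

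Once the residual bound $\|G_s(0,0,0,0)\|\le C\,\e^{-\alpha N}$ and the uniform invertibility $\|L_s^{-1}\|\le C_0$ are in hand, the standard contraction-mapping implicit function theorem produces a unique fixed point $(x_{\Z,s}(\tau,\lambda),g_s(\tau,\lambda))\in B_{\rho_x}\times B_{\rho_g}$ for every $(\tau,\lambda)\in B_{\rho_\tau}\times B_{\rho_\lambda}$. The radii may be chosen independent of $s$ because the second-derivative bounds on $G_s$ (controlled by \textbf{A1}, \eqref{pseudoest}, and the $\ell^2$-normalization of $u_\Z$) are $s$-uniform. The Lipschitz estimate \eqref{Ginvest} then follows from a mean-value expansion of $G_s$ composed with $L_s^{-1}$ together with a perturbation argument that the linearization at any point of $B_{\rho_x}\times B_{\rho_g}\times B_{\rho_\tau}\times B_{\rho_\lambda}$ differs from $L_s$ by a small operator. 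The hard part is clearly the second step: producing a single uniform bound $C_0$ for $\|L_s^{-1}\|$ across the entire shift space $\Omega_N$, which is where the careful patching of exponential-dichotomy estimates across arbitrarily long gaps between humps must be carried out.
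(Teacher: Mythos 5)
Your proposal is correct and follows essentially the same route as the paper: a uniform residual bound (the paper's Lemma \ref{L5.3}), a uniform bound on $D_{(x,g)}G_s(0,0,0,0)^{-1}$ obtained by patching bordered local inverses of the variational equation across the gaps between humps and absorbing the exponentially small gluing errors by a Neumann-series argument (the paper's Lemma \ref{Linv}), and finally a quantitative Lipschitz inverse mapping theorem (the paper's Theorem \ref{Va2}) giving existence, uniqueness and \eqref{Ginvest}. The only cosmetic difference is that the paper's local problems are full-line bordered systems controlled in weighted $\ell^\infty$ norms with plateaus over each hump's interval, rather than finite hump-intervals glued through the connecting pieces via the fixed point's dichotomy, but the underlying mechanism is identical.
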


\subsection{Preparatory Lemmata}
\label{ss5.2}
In order to construct the neighborhoods $U$ and $\Lambda$ in Theorem
\ref{Th1.1} we need several lemmata.

\begin{lemma}\label{L5.2}
There exists $N_1 \in \N$, such that for all $N \ge N_1$, $s \in
\Omega_N$ the linear system
\begin{equation}\label{5.6}
\sum_{k \in I(s)} \langle \beta^{-\ell} u_\Z,\beta^{-k} u_\Z\rangle
\tau_k = r_\ell,\quad \ell \in I(s),\ r \in \ell^\infty(s)
\end{equation}
has a unique solution $\tau \in \ell^\infty(s)$ and
\begin{equation}\label{5.7}
\|\tau\|_\infty \le 2 \|r\|_\infty.
\end{equation}
\end{lemma}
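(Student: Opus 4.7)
\noindent\textbf{Proof proposal for Lemma \ref{L5.2}.}
The plan is to view the system \eqref{5.6} as a matrix equation $M\tau=r$ on $\ell^\infty(s)$, where
$$
M_{\ell,k} = \langle \beta^{-\ell} u_\Z,\beta^{-k} u_\Z\rangle,\quad \ell,k\in I(s),
$$
and to show that $M$ is a small perturbation of the identity. The diagonal entries satisfy $M_{\ell,\ell}=\langle u_\Z,u_\Z\rangle = \|u_\Z\|_{\ell^2}^2 = 1$ thanks to the normalization \eqref{unormalize}, and the inner products are translation-invariant, so $M_{\ell,k}$ depends only on $m:=k-\ell$. Splitting $M=I+E$ with $E_{\ell,\ell}=0$, the claim will follow from a Neumann series argument once we show $\|E\|_\infty\le \tfrac{1}{2}$ for all sufficiently large $N$.

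The key technical step is an off-diagonal decay estimate for $|M_{\ell,k}|$. Using the substitution $j=n-\ell$ and the exponential bound \eqref{exdec} on $u_\Z$,
$$
|M_{\ell,k}| = \Bigl|\sum_{j\in\Z} u_j^T u_{j-m}\Bigr| \le C_e^2 \sum_{j\in\Z} \e^{-\alpha|j|}\e^{-\alpha|j-m|}.
$$
I would split the summation at $j=\lfloor m/2\rfloor$: for indices on the far side from $0$ one factor is at most $C_e \e^{-\alpha|m|/2}$, and the remaining $\ell^1$-sum of $\|u_j\|$ is bounded by $2C_e/(1-\e^{-\alpha})$. This yields a constant $C_1>0$ (depending only on $C_e$, $\alpha$) with
$$
|M_{\ell,k}| \le C_1 \e^{-\alpha|k-\ell|/2}\quad \text{for } k\ne \ell.
$$

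Now I would exploit the sparsity of $I(s)$. Because $I(s)\in\Z(N)$, the points of $I(s)\setminus\{\ell\}$ arranged by distance from $\ell$ lie at separations $\ge N, \ge 2N, \ge 3N, \dots$ on either side. Hence
$$
\sum_{k\in I(s),\,k\ne\ell}|E_{\ell,k}| \le 2C_1 \sum_{j\ge 1} \e^{-\alpha j N/2} = \frac{2C_1\, \e^{-\alpha N/2}}{1-\e^{-\alpha N/2}}.
$$
Choosing $N_1\in\N$ large enough so that the right-hand side is $<\tfrac{1}{2}$ for all $N\ge N_1$, we obtain $\|E\|_\infty\le \tfrac{1}{2}$ uniformly in $s\in\Omega_N$. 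The Neumann series then yields invertibility of $M$ on $\ell^\infty(s)$ with $\|M^{-1}\|_\infty \le (1-\|E\|_\infty)^{-1}\le 2$, which gives existence, uniqueness, and the bound \eqref{5.7}.

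The only real obstacle is the off-diagonal decay estimate; everything else is bookkeeping. Once that estimate is in hand, the separation property of $\Z(N)$ makes the $\ell^\infty\to\ell^\infty$ row-sum norm of $E$ geometrically small in $N$, and the Banach-Neumann argument delivers the constant $2$ directly without any further work.
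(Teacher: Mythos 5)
Your proposal is correct and follows essentially the same route as the paper: the paper rewrites \eqref{5.6} as the fixed point equation $\tau = P\tau + r$ with $P=-E$ in your notation, uses the normalization \eqref{unormalize} for the unit diagonal, bounds the off-diagonal row sums by a geometric series in $\e^{-\alpha N}$ thanks to the $N$-separation of $I(s)$, and concludes by contraction — which is the same as your Neumann series argument. Your treatment of the off-diagonal decay (splitting the convolution sum at $m/2$ to get the rate $\alpha/2$) is in fact slightly more careful than the paper's, which states the decay rate as $\e^{-\alpha jN}$ without accounting for the polynomial factor in the convolution of two exponentials.
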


\begin{proof}
Rewrite \eqref{5.6} as fixed point equation
$$
\tau = P \tau + r,\quad 
(P\tau)_\ell = - \sum_{k\in I(s), k\neq \ell} \langle \beta^{-\ell} u_\Z,
\beta^{-k} u_\Z\rangle \tau_k
$$ 
and note
\begin{eqnarray*}
\|P\tau\|_\infty &\le &
\|\tau\|_\infty \sup_{\ell \in I(s)} \sum_{k\in I(s), k\neq \ell}
|\langle u_\Z, \beta^{\ell -k} u_\Z\rangle |\\
&\le & \|\tau\|_\infty C \sum_{j \ge 1} \e^{-\alpha j N} 
= \frac {C \e^{-\alpha N}}{1-\e^{-\alpha N}} \|\tau\|_\infty.
\end{eqnarray*}
Thus, we choose $N_1$ such that $\frac {C \e^{-\alpha
    N_1}}{1-\e^{-\alpha N_1}}\le \frac 12$. Then $P$ is contractive and
\eqref{5.7} follows. 
\end{proof}

\begin{lemma}\label{L5.3}
There exist $N_2 \in \N$, $C_2 > 0$ such that 
\begin{equation}\label{5.9}
\|F(p_\Z(s) + v_\Z(s,\tau),\lambda)\|_\infty \le C_2(|\lambda| +
\|\tau\|_\infty^2 + \e^{-\alpha N/2})
\end{equation}
for all $N \ge N_2$, $s\in \Omega_N$, $|\lambda| \le 1$, $\tau \in B_1
\subset \ell^\infty(s)$.
\end{lemma}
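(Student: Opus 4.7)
The plan is to decompose the defect as
\[
F(p_\Z(s) + v_\Z(s,\tau), \lambda) = F(p_\Z(s), 0) + T_1 + T_2,
\]
where $T_1 = F(p_\Z(s), \lambda) - F(p_\Z(s), 0)$ captures the parameter perturbation and $T_2 = F(p_\Z + v_\Z, \lambda) - F(p_\Z, \lambda)$ the variational correction, and to bound each piece in $\ell^\infty$. The bound on $T_1$ is immediate from the mean value theorem applied to $\mu \mapsto f(p_n, \mu)$ on a bounded set (note $\|p_n\| \le \bar C$ by \eqref{pseudoest}), giving $\|T_1\|_\infty \le C|\lambda|$.

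For $T_2$, Taylor expanding $f(\cdot, \lambda)$ at $p_n$ yields
\[
T_{2,n} = v_{n+1} - f_x(p_n, \lambda) v_n + O(\|v_n\|^2).
\]
Since $\|v_n\| \le C\|\tau\|_\infty$ via a geometric-series bound using \eqref{exdec} and the separation $|j-k|\ge N$ on $I(s)$, the quadratic remainder is $O(\|\tau\|_\infty^2)$. Inserting the variational equation $u_{n+1-\ell} = f_x(\bar x_{n-\ell}, 0) u_{n-\ell}$ rewrites the linear part as
\[
\sum_{\ell \in I(s)} \tau_\ell \bigl[f_x(\bar x_{n-\ell}, 0) - f_x(p_n, \lambda)\bigr] u_{n-\ell}.
\]
Using $\|f_x(y, \lambda) - f_x(y', 0)\| \le C(\|y-y'\| + |\lambda|)$, selecting the nearest hump position $\ell^*(n)\in I(s)$, and bounding the $\ell=\ell^*$ contribution via $\|p_n - \bar x_{n-\ell^*}\|$ (small by separation plus exponential decay) and the $\ell \neq \ell^*$ contributions via $\|u_{n-\ell}\|$ (itself small for the same reason), the whole sum reduces to $C\|\tau\|_\infty(|\lambda| + \e^{-\alpha N/2})$.

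The delicate piece is $F(p_\Z(s), 0)$, which measures how far the sum of shifted orbits is from being an orbit itself. The orbit equation $\bar x_{n+1-\ell} = f(\bar x_{n-\ell}, 0)$ rewrites
\[
F(p_\Z(s), 0)_n = \sum_{\ell \in I(s)} f(\bar x_{n-\ell}, 0) - f\Bigl(\sum_{\ell \in I(s)} \bar x_{n-\ell}, 0\Bigr),
\]
the commutator between summation and the nonlinear map $f(\cdot, 0)$. Expanding the right-hand side around the nearest summand $\bar x_{n-\ell^*}$ and using the linearization $f(y, 0) = f_x(0,0) y + O(\|y\|^2)$ for the far summands (valid since $f(0,0)=0$), the principal terms cancel, leaving residuals controlled by $\|[f_x(0,0) - f_x(\bar x_{n-\ell^*}, 0)] R_n^p\|$, $\|R_n^p\|^2$, and $\sum_{\ell \ne \ell^*}\|\bar x_{n-\ell}\|^2$, where $R_n^p := \sum_{\ell \ne \ell^*} \bar x_{n-\ell}$. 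The same overlap estimate shows each is $O(\e^{-\alpha N/2})$. Combining the three bounds and absorbing mixed terms $|\lambda|\|\tau\|_\infty$ and $\|\tau\|_\infty \e^{-\alpha N/2}$ via AM--GM (using $|\lambda|, \|\tau\|_\infty \le 1$) yields the stated estimate, with $N_2$ chosen so that $\sum_{k \ge 1} \e^{-\alpha k N_2}$ sums to a controlled constant. The hard part is the commutator residual: neither factor $f_x(0,0) - f_x(\bar x_{n-\ell^*}, 0)$ nor $R_n^p$ is small on its own when $n$ sits near a hump, so one must pair the Lipschitz estimate for $f_x$ with the separation-induced smallness of the far-away contributions in $R_n^p$.
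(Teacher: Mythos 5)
Your proof is correct and follows essentially the same route as the paper: Taylor-expand around the pseudo-orbit, use the orbit and variational equations to rewrite the shifted terms, single out the nearest hump $\ell^*$, and exploit the separation $|n-\ell|\ge N/2$ for $\ell\neq\ell^*$ together with the exponential decay \eqref{exdec} to get the $\e^{-\alpha N/2}$ residual. The only cosmetic difference is that the paper bounds the commutator term $\sum_\ell f(\bar x_{n-\ell},0)-f(p_n(s),0)$ directly via the orbit relation and a Lipschitz constant of $f$, rather than via a second-order expansion around $\bar x_{n-\ell^*}$ and around $0$, but both give the same estimate.
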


\begin{proof}
We estimate
\begin{eqnarray*}
&& p_{n+1}(s) + v_{n+1}(s,\tau) - f(p_n(s)+v_n(s,\tau),\lambda)\\
&=& \sum_{\ell \in I(s)}(\bar x_{n+1-\ell} + \tau_\ell u_{n+1-\ell}) -
f(p_n(s) + v_n(s,\tau),0) + \OO(|\lambda|)\\
&=& \sum_{\ell \in I(s)} \big( f(\bar x_{n-\ell},0) + \tau_\ell
  f_x(\bar x_{n-\ell},0) u_{n-\ell}\big)\\
&& - f\big(\sum_{\ell \in
    I(s)} \bar x_{n-\ell} + \sum_{\ell \in I(s)} \tau_\ell u_{n-\ell}
  ,0\big) +\OO(|\lambda|)\\
&=& \sum_{\ell \in I(s)} f(\bar x_{n-\ell},0) +  \sum_{\ell \in I(s)}
\tau_\ell f_x(\bar x_{n-\ell},0) u_{n-\ell} \\
&& - f\big(\sum_{\ell \in I(s)} \bar x_{n-\ell},0\big) 
- f_x\big(\sum_{j \in I(s)} \bar x_{n-j},0\big) \sum_{\ell \in
  I(s)} \tau_\ell u_{n-\ell} + \OO(|\lambda| + \|\tau\|_\infty^2).
\end{eqnarray*}
For $n\in\Z$, choose $\tilde \ell \in I(s)$ such that $|n-\tilde \ell|
\le |n-\ell|$ for all $\ell \in I(s)$. Then $|n-\ell|\ge \frac{N}{2}$ for
all $\ell \neq \tilde{\ell}$ and, therefore, by \eqref{exdec},
\begin{equation} \label{pseudofest}
\begin{aligned}
&&\Big\|\sum_{\ell \in I(s)} f(\bar x_{n-\ell},0) - f\big(\sum_{\ell \in
  I(s)} \bar x_{n-\ell},0\big)\Big\|\\
&\le& \Big\|\sum_{\ell \in I(s),\ell \neq \tilde \ell} f(\bar
x_{n-\ell},0)\Big\| + \Big\|f(\bar x_{n-\tilde \ell},0) -
f\big(\sum_{\ell \in I(s)} \bar x_{n-\ell},0\big) \Big\|\\
&\le & \Big\| \sum_{\ell \in I(s),\ell \neq \tilde \ell} \bar
x_{n+1-\ell}\Big\| + L  \Big\| \sum_{\ell \in I(s),\ell \neq \tilde
  \ell} \bar x_{n-\ell}\Big\| \le C_2 \e^{-\alpha N /2}.
\end{aligned}
\end{equation}
In a similar way, 
\begin{equation} \label{pseudouest}
\sum_{\ell \in I(s)} \tau_\ell f_x\big(\sum_{j \in I(s)} \bar
x_{n-j},0\big)u_{n-\ell} = 
\sum_{\ell \in I(s)} \tau_\ell f_x(\bar
x_{n-\ell},0)u_{n-\ell} + \OO(\e^{-\alpha N/2}\|\tau\|_{\infty}). 
\end{equation}
Combining these estimates, we obtain \eqref{5.9}.
\end{proof}

\begin{lemma}\label{L1}
Assume \textbf{A1}, \textbf{A2} and let $\bar x_\Z$ be a homoclinic
$f(\cdot, 0)$-orbit with respect to the hyperbolic fixed point $0$.
Then, there exist zero neighborhoods $U_3\subset \R^k$,
$\Lambda_3 \subset \Lambda_0$ and constants $N_3, n_0 ,\alpha>0, C_3\ge 1$
 such that the following statement holds for all $K \ge N_3$,
$-n_-,n_+ \ge n_0$:

If $x_{n+1} = f(x_n,\lambda)$ for $n \in \tilde J:=[0,K-1]$, 
$\lambda\in \Lambda_3$, and if $x_n
\in U_3$ for all $n \in J:=[0,K]$ 
then we have the estimate
$$
\sup_{j \in J} \|x_j - \bar x_{n_-+1-K+j} - \bar x_{n_+-1+j}\|
\le C_3 \left(\|x_K - \bar x_{n_-+1}\| + \|x_{0} - \bar x_{n_+-1}\| + |\lambda|+
  \e^{-\alpha \frac K2}\right).
$$ 
Furthermore we obtain in case $K = \infty$:
\begin{equation}\label{unendl}
\sup_{j \ge 0} \|x_j - \bar x_{n_+-1+j}\| \le C_3 (\|x_0 - \bar
  x_{n_+-1}\| + |\lambda|).
\end{equation}
\end{lemma}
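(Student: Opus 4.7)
The plan is a discrete shadowing argument around the pseudo-orbit
\[
\tilde x_j := \bar x_{n_-+1-K+j} + \bar x_{n_+-1+j}, \qquad j \in J,
\]
formed from two exponentially decaying tails of $\bar x_\Z$. First I would verify the pseudo-orbit defect bound
\[
\|\tilde x_{j+1} - f(\tilde x_j,0)\| \le C\,\e^{-\alpha K}, \qquad j \in \tilde J,
\]
in the spirit of \eqref{pseudofest}. Writing the defect as $f(\bar y,0)+f(\bar z,0) - f(\bar y+\bar z,0)$ with $\bar y=\bar x_{n_-+1-K+j}$, $\bar z=\bar x_{n_+-1+j}$, and using $f(0,0)=0$, the integral form of Taylor's theorem yields the bilinear estimate $C\|\bar y\|\,\|\bar z\|$, and the exponential decay $\|\bar y_j\|\,\|\bar z_j\| \le C \e^{-\alpha(K-j)}\e^{-\alpha j} = C \e^{-\alpha K}$ handles the rest uniformly in $j$.

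Next, set $e_j := x_j - \tilde x_j$ and subtract $\tilde x_{j+1}=f(\tilde x_j,0)+\delta_j$ from $x_{j+1}=f(x_j,\lambda)$. A Taylor expansion of $f$ in both arguments gives
\[
e_{j+1} = f_x(\tilde x_j,0)\, e_j + \rho_j, \qquad \|\rho_j\| \le C\bigl(|\lambda| + \e^{-\alpha K} + \|e_j\|^2\bigr).
\]
For $U_3$ small and $n_0$ large, $\tilde x_j$ lies in an arbitrarily small neighborhood of the hyperbolic fixed point $0$, so $\{f_x(\tilde x_j,0)\}_{j\in\tilde J}$ is a small perturbation of the constant hyperbolic matrix $f_x(0,0)$. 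The roughness theorem for discrete exponential dichotomies then supplies a uniform exponential dichotomy on $\tilde J$ with projections $P_j^s,P_j^u$ and with rate and constant independent of $K$.

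The central step is to use this dichotomy to control $e_j$ by the boundary data and $\rho$. Variation of constants splits $e_j$ via the dichotomy projections,
\begin{align*}
P_j^s e_j &= \Phi^s(j,0)\,P_0^s e_0 + \sum_{k=0}^{j-1} \Phi^s(j,k+1)\, P_{k+1}^s \rho_k,\\
P_j^u e_j &= \Phi^u(j,K)\,P_K^u e_K \;-\; \sum_{k=j}^{K-1} \Phi^u(j,k+1)\, P_{k+1}^u \rho_k.
\end{align*}
The exponential bounds on $\Phi^{s,u}$ collapse the two sums to a uniform-in-$K$ multiple of $\|\rho\|_\infty$, while the boundary terms contribute at most $C(\|e_0\|+\|e_K\|)$. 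Combining with $\|e_0\| \le \|x_0 - \bar x_{n_+-1}\| + \|\bar x_{n_-+1-K}\|$ and the analogous bound for $\|e_K\|$ (where each extra tail term is $\le C\e^{-\alpha K}$), and absorbing the $\|e\|_\infty^2$ contribution of $\rho$ by shrinking $U_3$ and $\Lambda_3$ in a bootstrap, yields the claimed inequality with a suitable $C_3 \ge 1$ (the rate $\alpha$ may be halved to match $\e^{-\alpha K/2}$ in the statement).

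The main obstacle is the quantitative uniformity in $K$ of the roughness step: the diameter of $U_3$ and the lower bound $n_0$ must be tied to the perturbation size tolerated by the discrete dichotomy roughness theorem so that the resulting dichotomy constants on $\tilde J$ remain bounded independently of $K$. For the half-line case $K=\infty$ the same formula applies with the pseudo-orbit reduced to the single tail $\bar x_{n_+-1+j}$: the defect $\delta_j$ vanishes identically, the unstable boundary term $\Phi^u(j,K)P_K^u e_K$ drops out since $\|\Phi^u(j,K)\|\le C\e^{-\alpha(K-j)}\to 0$ as $K\to\infty$ while $\|e_K\|$ stays bounded, and $\|e_0\|$ is controlled directly by $\|x_0-\bar x_{n_+-1}\|$; this gives \eqref{unendl}.
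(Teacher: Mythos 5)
Your proposal is correct and follows essentially the same route as the paper: the same two-tail pseudo-orbit $p_j=\bar x_{n_-+1-K+j}+\bar x_{n_+-1+j}$, a quadratically small defect, an exponential dichotomy on $[0,K]$ obtained from the roughness theorem as a perturbation of the constant hyperbolic matrix $f_x(0,0)$ (uniformly in $K$ once $n_0$ is large and $U_3$, $\Lambda_3$ are small), and Green's-function bounds to control the error by the boundary data plus $\|\rho\|_\infty$. The only cosmetic difference is that the paper packages the nonlinear step as a boundary value operator $\Gamma_J$ with projection boundary conditions and invokes the quantitative Lipschitz inverse mapping theorem (Theorem \ref{Va2}), whereas you inline the same estimates via variation of constants and an absorption/bootstrap of the $\|e\|_\infty^2$ term; both versions hinge on identical dichotomy estimates, and your handling of the $K=\infty$ case likewise matches the paper's one-sided operator argument.
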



\begin{proof}
Consider the pseudo-orbit $p_j:= \bar x_{n_+-1+j} + \bar x_{n_-+1-K+j}$,
$j\in J$ which is a zero of the boundary value operator 
$$
\Gamma_J(y_J,\lambda) :=
\begin{pmatrix}
y_{n+1} - f(y_n, \lambda)-\rho_n,\quad n \in \tilde J\\
b_K(y_{0},y_K)  
\end{pmatrix},
$$
at $\lambda= 0$, where
\begin{eqnarray*}
\rho_n &:=& f(\bar x_{n_+-1+n},0) + f(\bar x_{n_-+1-K+n},0) - 
f(p_n,0),\quad n \in J,\\
b_K(y_{0},y_K) &:=&
\begin{pmatrix}
P_s(y_{0} - p_{0})\\ P_u(y_K-p_K)  
\end{pmatrix}.
\end{eqnarray*}
Here $P_s$ and $P_u$ are the stable and unstable projectors of the
fixed point $0$.


Let $b$ be the bound from Theorem
\ref{rough} for the difference equation 
$$
u_{n+1} = (f_x(0,0) + B_n)u_n,\quad B_n = f_x(p_n,\lambda) -
f_x(0,0),\quad n\in \tilde J.
$$
For sufficiently large $-n_-, n_+ \ge n_0$ and $\lambda \in \Lambda_3$
sufficiently small, we get $\|B_n\| \le b$ for all $n \in \tilde
J$. Consequently 
$$
u_{n+1} = f_n(p_n,\lambda) u_n ,\quad n \in \tilde J
$$
has an exponential dichotomy on $J$ with projectors $P_n^s,P_n^u$ and an exponential rate $\alpha$
that is independent of $n_-$, $n_+$, $\lambda$ and $K$. 

As in the proof of \cite[Theorem 4]{hu11}, we show that for
$\lambda \in \Lambda_3$, $n_-, n_+ \ge n_0$ and $K \ge N_3$ we have a
uniform bound 
\begin{equation}\label{inverseest}
\|D_1\Gamma_J(p_J,\lambda)^{-1}\|_\infty \le \sigma^{-1}.
\end{equation}

In order to see this, consider the inhomogeneous difference equation 
\begin{eqnarray}
u_{n+1} - f_x(p_n,\lambda) u_n &=& r_n,\quad n = 0,\dots K-1,\label{inhom1}\\
P_s u_0 + P_u u_K &=& \gamma.  \label{inhom2}  
\end{eqnarray}
Denote by $\Phi$ the solution operator of the homogeneous equation and
let $G$ be the corresponding Green's function, cf.\ \eqref{greenplus}. 
The general solution of \eqref{inhom1} is given by
\begin{equation}\label{lsg1}
u_n = \Phi(n,0) v + \sum_{m \in \tilde J} G(n,m+1) r_m,
\end{equation}
where
$$
v = v_- + \Phi(0,K) v_+,\quad v_- \in \range(P_0^s),\quad v_+ \in
\range(P_K^u).
$$ 
Inserting \eqref{lsg1} into \eqref{inhom2}, it remains to solve
$$
P_s \big(v_- + \Phi(0,K)v_+\big) + 
P_u \big(\Phi(K,0)v_- + v_+\big) = R,
$$
with 
$$
R = \gamma - P_s \sum_{m\in \tilde J} G(0,m+1)r_m 
 - P_u  \sum_{m\in \tilde J} G(K,m+1)r_m. 
$$
This finite-dimensional system has a unique solution for $K\ge N_3$ sufficiently large
since $\|P_s - P_0^s\| \to 0$ and $\|P_u-P_K^u\| \to 0$ as $K\to \infty$.
Therefore, the system \eqref{inhom1}, \eqref{inhom2} also has a unique solution $u_J$
for $K$ large and the dichotomy estimates lead to a bound
$$
\|u_J\|_\infty \le \sigma^{-1} (|\gamma| + \|r_J\|_\infty),
$$
i.e. \eqref{inverseest} holds.

We apply Theorem \ref{Va2} with the space 
$Y=\ell^{\infty}_{J}=\{(y_n)_{n\in J}: y_n \in \R^k \}$ of finite sequences 
and with $Z=\ell^{\infty}_{\tilde{J}}\times \R^k$,  both endowed with the sup-norm. 
We take $y_0 =p_J$ and use uniform data for all $\lambda \in \Lambda_3$. 
For $\delta$ sufficiently small we have 
$$
\|D_1\Gamma_J(x_J,\lambda) - D_1\Gamma_J(p_J, \lambda)\|_\infty \le
\frac \sigma 2 \text{ for all sequences} \; x_J \in 
B_\delta(p_J), \lambda \in \Lambda_3,
$$
and by choosing the neighborhood $\Lambda_3$ sufficiently small we get 
\begin{eqnarray*}
\|\Gamma_J(p_J,\lambda)\|_\infty &=& \sup_{n\in \tilde J}\|p_{n+1} -
f(p_n,\lambda) - \rho_n\| + \|b_K(p_{0},p_{K})\|\\ 
&=& \sup_{n\in\tilde J}\| p_{n+1} - f(p_n,\lambda) + f(p_n,0)\\
&&\qquad  - f(\bar x_{n_+-1 + n},0) - f(\bar x_{n_-+1-K + n},0)\| \\
&=& \sup_{n\in\tilde J}\|f(p_n,0) - f(p_n,\lambda)\| \le \frac \sigma 2 \delta
\quad \text{for } \lambda \in \Lambda_3.
\end{eqnarray*}
Theorem \ref{Va2} applies to $\lambda \in \Lambda_3$ with uniform data, 
and  it follows from \eqref{A2} with some constant $C_3 > 0$ that
\begin{equation}\label{A5}
\|x_J - y_J\|_{\infty} \le C_3 
\|\Gamma_J(x_J,\lambda) - \Gamma_J(y_J,\lambda)\|_{\infty}
 \quad \text{for
  all } x_J, y_J \in B_\delta(p_J), \lambda\in \Lambda_3.
\end{equation}
From \eqref{exdec} we find a number $n_0$ such that
 $\bar x_n \in B_{\frac \delta 2}(0)$ for all $|n|\ge n_0$ 
and also $p_n\in B_{\frac \delta 2}(0), n\in J$
for all $-n_-,n_+ \ge n_0$. Then we take $U_3 := B_{\frac {\delta} {2}}(0)$ 
as our neighborhood and note that
\eqref{A5} holds for any two sequences $x_J, y_J$ in $U_3$.
 For $n\in \tilde J$ and $\lambda \in \Lambda_3$ 
it follows that 
\begin{eqnarray*}
&&f(p_n,\lambda) - p_{n+1}\\ 
&=& f(\bar x_{n_+ -1 + n} + \bar x_{n_- +1 -K + n} , \lambda)
- \bar x_{n_+ -1 + n + 1} - \bar x_{n_- +1 -K + n + 1} \\
&=& f(\bar x_{n_+ -1 + n} + \bar x_{n_- +1 -K + n} ,0)
-f(\bar x_{n_+ -1 + n },0) - f(\bar x_{n_- +1 -K + n},0) + \OO(|\lambda|)\\
&=&
\left\{
\begin{array}{ll}
- f(\bar x_{n_- +1 -K + n},0) + \OO(\|\bar x_{n_-+1-K+n}\|)
&\text{ for } 0\le n \le \frac K2\\[2mm]
- f(\bar x_{n_+ -1 +n},0)+ \OO(\|\bar x_{n_+ -1 + n}\|) &
\text{ for }\frac K2 < n \le K  
\end{array}\right\}
+ \OO(|\lambda|)\\
&=& \OO(\e^{-\alpha \frac K2} + |\lambda|).
\end{eqnarray*}

Now let $x_J$ be a sequence in $U_3$ such that $x_{n+1} = f(x_n,\lambda)$ for all $n \in
\tilde J$, and some $\lambda \in \Lambda_3$. Then 
\begin{eqnarray*}
\|x_J - p_J\|_\infty &\le & C \|\Gamma_J(x_J, \lambda) -
\Gamma_J(p_J,\lambda)\|_\infty \\
&\le& C \Big(\sup_{n\in\tilde J} \|f(p_n,\lambda)-p_{n+1}\| +
  \|b_K(x_{0},x_K)\|\Big) \\
&\le & C \left(\e^{-\alpha \frac K2} +   |\lambda|+ \left\|
  \begin{pmatrix}
   P_s(x_{0} - p_{0}) \\ P_u(x_K - p_K)  
  \end{pmatrix}
  \right\| \right)\\
&\le& C \left( \e^{-\alpha \frac K2} + \|x_{0} -p_{0}\| + \|x_K - p_K\| +|\lambda| 
  \right)\\
&\le& C_3\left( \e^{-\alpha \frac K2} + \|x_0 - \bar x_{n_+-1}\|
  +\|x_K - \bar x_{n_-+1}\| +|\lambda|\right).
\end{eqnarray*}
In case $K = \infty$, one uses the operator 
$$
\tilde \Gamma_\N(y_\N,\lambda) =
\begin{pmatrix}
y_{n+1} - f(y_n),\quad n \in \N\\
P_s(y_0-\bar x_{n_+-1})  
\end{pmatrix},
$$
and it turns out that $D_1\tilde \Gamma_\N(p_\N,\lambda)$ with 
$p_j = \bar x_{n_+-1+j}$ has a uniformly bounded inverse
for $\lambda \in \Lambda_3$ and
sufficiently large $-n_-,\ n_+$. Then the estimate \eqref{unendl}
follows immediately. 

\end{proof}
\subsection{Proof of Main Theorem}
Let us first prove assertion (i) in Theorem \ref{Th1.1}.\\
\label{ss5.3}
{\bf Step 1:} (Construction of neighborhoods $U,\Lambda$) \\
In the following $\Lambda_1\supset \Lambda_2\supset \ldots$ will denote
shrinking neighborhoods of $0$. Let $\rho_g,\rho_x>0$ be given by
Theorem \ref{Th5.1} and note that we can decrease $\rho_{\tau},\rho_{\lambda}$
without changing the assertion of Theorem \ref{Th5.1}.
Introduce the constants (cf.\ \eqref{exdec} and  Lemma \ref{L5.3}, \ref{L1}) 
\begin{equation} \label{const1}
\alpha_*= \e^{-\alpha}, \quad C_4= C_3 +\frac{2C_e}{1-\alpha_*}, 
\quad C_5=C_e \frac{3 - \alpha_*}{1-\alpha_*}, \quad C_{6}=C_0 C_2+
\frac{2C_e}{1-\alpha_*}.
\end{equation}
Let $\rho_{\tau}>0$ be such that
\begin{equation} \label{rhotaudef}
C_5 \rho_{\tau} \le \frac{\rho_x}{4}.
\end{equation}
By Lemma \ref{L1} we can  choose a ball $B_{3\varepsilon_0} \subset U_3$ 
and numbers $n_+,-n_- \geq n_0$ such 
that  $\bar{x}_n \in B_{\varepsilon_0} $ for all $n\ge n_+ -1$ and 
$n \leq n_-+1$.
It is well known that the only full orbit in a small
neighborhood of a hyperbolic fixed point is the fixed point itself. That
is, we can assume w.l.o.g. that $U_3,\Lambda_3$ satisfy
\begin{equation} \label{smallhyp}
y_{n+1}=f(y_n,\lambda), \ y_n\in U_3 (n\in \Z),\ \lambda \in \Lambda_3
\Rightarrow y_{n}=0 \; \text{for all} \; n\in \Z.
\end{equation}
The set $\mathcal{K}=\{0\}\cup \{\bar{x}_n: n\leq n_{-} \; 
\text{or}\; n\geq n_+\} $ 
is compact and satisfies 
\[f(\mathcal{K},0)\subset \mathcal{K}\cup \{\bar{x}_{n_-+1}\}.
\]
Thus we find an $\varepsilon \le \varepsilon_0$  and 
$\Lambda_4 \subset \Lambda_3$ such that the following
properties hold
\begin{equation} \label{defV0}
V_0:=B_{\varepsilon}\cup \bigcup_{n\le n_-,n\ge n_+}B_{\varepsilon}(\bar{x}_n) 
\subset B_{2 \varepsilon_0} \subset U_3,
\end{equation}
the balls
\begin{equation} \label{defBj}
 B_j:=B_{\varepsilon}(\bar x_{n_-+j}),\ j=1,\ldots, \varkappa:=n_+-1-n_-
\end{equation}
are mutually disjoint,
\begin{equation} \label{V0exit}
f(V_0,\Lambda_4) \cap B_j = \emptyset \quad \text{for} \quad j=2,\ldots,
\varkappa,
\end{equation}
\begin{equation} \label{epsconst}
2C_5(2C_3+1)\varepsilon \le \frac{\rho_{\tau}}{3} \quad \text{and} \quad
(2C_3+1)\varepsilon \le \frac{\rho_x}{4},
\end{equation}
see Figure \ref{fig9}.
\begin{figure}[hbt]
  \begin{center}
    \psfrag{xi}{$0$}
    \psfrag{x-}{$\bar{x}_{n_-}$}
    \psfrag{x+}{$\bar{x}_{n_+}$}
    \psfrag{x2}{$\bar{x}_{n_-+1}$}
    \psfrag{x3}{$\bar{x}_{n_+-1}$}
    \psfrag{V0}{\textcolor{blue}{$V_0$}}
    \psfrag{V1}{\textcolor{blue}{$V_1$}}
    \psfrag{V2}{\textcolor{blue}{$V_2$}}
    \psfrag{VJJ}{\textcolor{blue}{$V_{\varkappa-1}$}}
    \psfrag{VJ}{\textcolor{blue}{$V_\varkappa$}}
    \psfrag{B1}{$B_1$}
    \psfrag{B2}{$B_2$}
    \psfrag{BJ}{$B_\varkappa$}    
    \psfrag{BJJ}{$B_{\varkappa-1}$}
    \epsfig{width=8cm, file=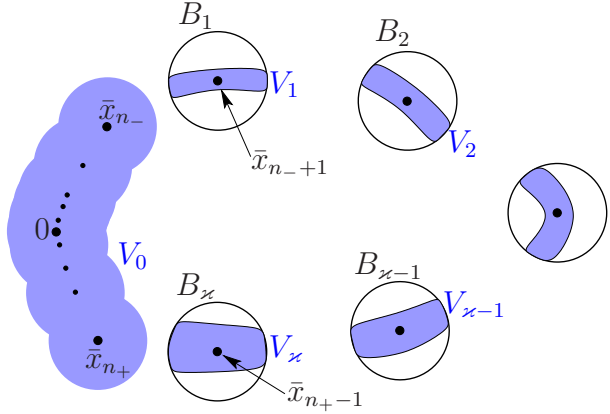}
  \end{center}
  \caption{\textit{Construction of neighborhoods. \label{fig9}}}
\end{figure}

Next we take $N_4\ge \max(N_1,N_2,N_3)$  (see Lemmata \ref{L5.2}, \ref{L5.3},
\ref{L1}) such that
\begin{equation} \label{Nconst}
2C_5C_4 \e^{-\alpha N_4} \le \frac{\rho_{\tau}}{3}, \quad
C_4 \e^{-\alpha N_4} \le \frac{\rho_x}{4}, \quad (2(C_e+C_3)+ 18C_3C_6)\e^{-\alpha N_4}\le
\frac{\varepsilon}{3}.
\end{equation}
We can also find $\Lambda_5 \subset \Lambda_4$ such that for all
$\lambda\in \Lambda_5$
\begin{equation} \label{lambdaconst}
2C_5C_3 |\lambda| \le  \frac{\rho_{\tau}}{3}, \quad 
 C_3 |\lambda| \le  \frac{\rho_x}{4}, \quad |\lambda|\le \rho_{\lambda},
\quad 6 C_3 |\lambda|(3C_0C_2+1) \le \varepsilon.
\end{equation}
Finally, we define
\begin{equation} \label{Ndef}
N= \varkappa + N_*, \quad \text{where} \quad N_*=2 N_4.
\end{equation}
Then we choose $\Lambda_6\subset \Lambda_5$  such that the following 
settings define neighborhoods $V_j$ of $\bar{x}_{n_-+j},j=\varkappa,\ldots,1$ 
recursively (cf.\ Figure \ref{fig9}):
\begin{equation} \label{vjdef}
\begin{aligned}
V_{\varkappa}&= B_{\varkappa}\cap \bigcap_{n=1}^{N_*+1}f^{-n}(V_0,\Lambda_6), \\
V_j & = B_j \cap f^{-1}(V_{j+1},\Lambda_6), \quad \text{for} \quad
j=\varkappa-1,\ldots,1.
\end{aligned}
\end{equation}
Here we use the notation  $f^{-n}(V_j,\Lambda_6)=
\{x:f^n(x,\lambda)\in V_j \; \text{for all} \; \lambda \in \Lambda_6\}$.

With these settings we consider the maximal invariant set $M(U,\Lambda)$,
cf.\ \eqref{maxinv}, that belongs to 
\begin{equation} \label{Ulamdef}
U = \bigcup_{j=0}^{\varkappa} V_j, \quad \Lambda = \Lambda_6.
\end{equation}
Let us note that our construction \eqref{defBj}, \eqref{V0exit}, 
\eqref{vjdef}, \eqref{Ulamdef}
 implies the following three
assertions for any $f(\cdot,\lambda)$-orbit $y_{\Z}\subset U$, $\lambda \in
\Lambda$
\begin{equation} \label{prop1}
y_{n}\in V_0, y_{n+1} \notin V_0 \Rightarrow y_{n+1} \in V_1,
\end{equation}
\begin{equation} \label{prop2}
y_n \in V_j \quad \text{for some}\quad 1\le j \le \varkappa-1 \Rightarrow
y_{n+1} \in  V_{j+1},
\end{equation}
\begin{equation} \label{prop3}
y_n\in V_{\varkappa} \Rightarrow y_{n+\ell}\in V_0 \quad \text{for} \quad
\ell=1,\ldots,N_*+1.
\end{equation}
{\bf Step 2:}(Construction of symbolic sequence $s$) \\
For some $\lambda \in \Lambda$ consider an orbit $y_{\Z}$ of \eqref{a1}
that lies in $U$. If it lies in $V_0$ then $y_n=0,n\in \Z$ by 
\eqref{smallhyp} and we set $s=0\in \Omega_N$.
Otherwise we have $y_{\tilde{n}}\notin V_0$ for some $\tilde{n}\in \Z$. 
We show that
\[ \tilde{I}(y_{\Z}):=\{ n \in \Z: y_n\in V_1 \}
\]
is nonempty and that there is a unique $s\in \Omega_N$ such that
$\tilde{I}(y_{\Z})= I(s)$, see \eqref{indexset}. By our assumptions
we have $y_{\tilde{n}}\in V_{j_0}$ for some $j_{0} \in \{1,\ldots,\varkappa\}$.
If $j_0=1$ then $\tilde{I}(y_{\Z})\neq \emptyset$ whereas in case
$j_0\ge2$ we obtain $y_{\tilde{n}-m}\in V_{j_0-m},m=0,\ldots,j_0-1$ by
induction from \eqref{prop2} and the fact that the $V_j$ are mutually disjoint.
Therefore, we have $\tilde{\ell}:=\tilde{n}-j_0+1 \in \tilde{I}(y_{\Z})$.
Moreover, from \eqref{prop1} and \eqref{prop3} we obtain 
\begin{equation} \label{ypos}
y_{\tilde{\ell}+j} \in V_{j+1}, j=0,\ldots \varkappa-1, \qquad
y_{\tilde{\ell}+j}\in V_0, j=\varkappa,\ldots, \varkappa+N_*.
\end{equation} This shows that
the difference of two consecutive indices $\tilde{\ell}< \ell$ in 
$\tilde{I}(y_{\Z})$ is at least $N=\varkappa+N_*$. Therefore $\tilde{I}(y_{\Z})$
belongs to $\Z(N)$ (cf.\ \eqref{zndef}) and there is a unique sequence
$s\in \Omega_N$ such that $\tilde{I}(y_{\Z})=I(s)$.

The relations \eqref{ypos} hold whenever $\ell\in I(s)$. By \eqref{defBj}
and \eqref{vjdef} this gives us the estimates
\begin{equation} \label{yinteriorest}
\|y_{p+\ell+\nu}-\bar{x}_p\| \le \varepsilon \quad \text{for} \quad
\ell\in I(s), p=n_-+1,\ldots,n_+-1
\end{equation} 
for the index $\nu=-n_--1$ (cf.\ \eqref{1.20}).
From this we will derive the estimate
\begin{equation} \label{serest}
T_n:=\|y_{n+\nu}- \sum_{m\in I(s)}\bar{x}_{n-m} \| \le C_4 \e^{-\alpha N_4}+C_3 |\lambda| +
(2C_3+1)\varepsilon \quad \text{for} \quad n\in \Z.
\end{equation}
Consider first indices $n=p+\ell$ with $\ell\in I(s)$ and 
$p=n_-+1,\ldots,n_+-1$. Then with \eqref{Ndef} we find
\begin{equation} \label{estmiddle} 
\begin{aligned}
 T_n 
& \le  \|y_{p+\ell+\nu}-\bar{x}_p\| +\sum_{m\in I(s),m\neq \ell} 
\|\bar{x}_{p+\ell-m}\| \\
& \le  \varepsilon + C_e \left(
\sum_{I(s)\ni  m<\ell}\alpha_*^{p+\ell-m}+\sum_{I(s)\ni m >\ell}
\alpha_*^{m-\ell-p}\right) \\
&\le   \varepsilon + C_e \left(
\sum_{\mu\ge 1}\alpha_*^{\mu N+n_-+1}+\sum_{\mu\ge1}\alpha_*^{\mu N -n_++1}\right) \\
& =  \varepsilon + \frac{C_e}{1-\alpha_*^N}\left( 
\alpha_*^{N+n_-+1}+ \alpha_*^{N-n_++1} \right) \\
& \le  \varepsilon + \frac{2C_e}{1-\alpha_*}\alpha_*^{N_*}.
\end{aligned}
\end{equation}
Next consider two consecutive indices $\ell < \tilde{\ell}$ in $I(s)$
and $n=p+\ell$ for $p=n_+-1,\ldots,\tilde{p}=\tilde{\ell}-\ell+n_-+1$.
For these indices we get  
$$
y_{n+\nu}\in 
\left\{
\begin{array}{rl}
V_\varkappa, & \text{ for } p = n_+-1,\\
V_1, & \text{ for } p = \tilde p,\\
V_0,& \text{ otherwise},    
\end{array}
\right.
$$
and we can apply Lemma \ref{L1} 
to this sequence in place of $\tilde{x}_0,\ldots,\tilde{x}_K$,
where  $K=\tilde{\ell}-\ell+2-(n_+-n_-)=\tilde{\ell}-\ell+1-\varkappa
\ge N_*+1=2N_4+1$. With \eqref{yinteriorest} this yields the estimate
 \begin{equation} \label{tricky}
\begin{aligned}
& \sup_{j=0,\ldots,K}\|y_{\ell+\nu+n_+-1+j}-\bar{x}_{n_++j-1}-\bar{x}_{n_-+j-K+1} \|\\
\le & \ C_3\left(\|y_{\ell+\nu+n_+-1}-\bar{x}_{n_+-1}\| +
\|y_{\ell+\nu+n_+-1+K}-\bar{x}_{n_- +1}\| +|\lambda|+\alpha_*^{-K/2} \right) \\
\le & \ C_3 \left( 2 \varepsilon +|\lambda|+ \alpha_*^{N_4}\right).
\end{aligned}
\end{equation}
Finally, we use this to estimate for $n=p+\ell$ and $p=n_+-1,\ldots,\tilde{p}$
\begin{equation*} \label{yfinis}
\begin{aligned}
T_n  & \le  \|y_{n+\nu}-\bar{x}_{n-\ell} -\bar{x}_{n-\tilde{\ell}}\|
+ \sum_{m\in I(s),m\neq \ell,\tilde{\ell}}\|\bar{x}_{n-m} \|.
\end{aligned}
\end{equation*}
The first term is handled by \eqref{tricky}. Further note that 
\begin{equation} \label{restfinis}
 \sum_{\tilde{\ell}<m\in I(s)}\|\bar{x}_{n-m}\| \
\le C_e \sum_{j\ge 1}\alpha_*^{j N_*} \le \frac{C_e \alpha_*^{N_*}}{1-\alpha_*}.
\end{equation}
and the same estimate holds for $\ell>m \in I(s)$.
Collecting estimates \eqref{estmiddle} to \eqref{restfinis} we arrive at
\eqref{serest}:
\begin{equation*}
 T_n \le (2 C_3+1)\varepsilon +C_3 |\lambda| + C_3 \alpha_*^{N_4}
+ \frac{2 C_e}{1-\alpha_*} \alpha_*^{2 N_4} 
\le  C_4 \e^{-\alpha N_4}+ C_3 |\lambda| + (2 C_3+1) \varepsilon.
\end{equation*}
Finally, we note that \eqref{serest} also holds in case $\tilde{\ell}$
is the smallest index in $I(s)$ or $\ell$ is 
the largest index in $I(s)$, respectively.
 Then one repeats the previous arguments
with the formal setting $\ell=-\infty$ resp. $\tilde{\ell}=\infty$
and uses the corresponding one-sided version of Lemma \ref{L1}. 

\noindent
{\bf Step 3:} (Construction and estimate of $\tau$ and $x_{\Z}$)\\
We want to find $x_{\Z}\in \ell^{\infty}(\R^k),\tau \in \ell^{\infty}(s)$
such that \eqref{5.4} holds with $g=0$. The second term of the operator
$G$ in \eqref{defop} vanishes provided we solve
\begin{equation} \label{tausolve}
\sum_{m\in I(s)}\langle \beta^{-\ell} u_{\Z},\beta^{-m}u_{\Z}\rangle \tau_m
= \langle \beta^{-\ell}u_{\Z},\beta^{\nu}y_{\Z} - p_{\Z}(s) \rangle,
\quad \ell \in I(s),
\end{equation} 
and set
\begin{equation} \label{xdef}
x_{\Z} =\beta^{\nu}y_{\Z} - p_{\Z}(s) - v_{\Z}(s,\tau).
\end{equation}
By Lemma \ref{L5.2} the linear system \eqref{tausolve} has a unique solution
$\tau\in \ell^{\infty}(s)$ which satisfies by \eqref{serest}, \eqref{exdec},
\eqref{const1} 
\begin{equation*} 
\begin{aligned}
\|\tau\|_{\infty} & \le 2 \|u_{\Z}\|_{\ell^1} \sup_{n\in \Z}\|y_{n+\nu}-\sum_{m\in I(s)}\bar{x}_{n-m} \|\\
& \le  2 C_e \frac{1+\alpha_*}{1-\alpha_*} \left(C_4 \e^{-\alpha N_4}+C_3 |\lambda| +
(2C_3+1)\varepsilon \right).
\end{aligned}
\end{equation*} 
Using \eqref{epsconst}, \eqref{Nconst}, \eqref{lambdaconst} and 
$C_e \frac{1+\alpha_*}{1-\alpha_*}\le C_5$ we end up with
$\|\tau\|_{\infty} \le \rho_{\tau}$.
Next we estimate $x_{\Z}$ from \eqref{xdef}. Using \eqref{exdec}
it is easy to show that
\begin{equation} \label{usumest}
\|\sum_{\ell \in I(s)}\beta^{-\ell}u_{\Z} \|_{\infty} \le C_e \frac{3-\alpha_*}{1-\alpha_*}= C_5.
\end{equation}
Therefore, when using \eqref{serest} again, we find
\begin{equation*}
\begin{aligned}
\|x_{\Z}\|_{\infty} & \le  \|\beta^{\nu}y_{\Z}-p_{\Z}(s)\|_{\infty} + 
C_5 \rho_{\tau} \\
& \le  C_4 \e^{-\alpha N_4}+C_3 |\lambda| +(2C_3+1)\varepsilon
+ C_5 \rho_{\tau}.
\end{aligned}
\end{equation*}
The estimates from \eqref{rhotaudef}, \eqref{epsconst}, \eqref{Nconst},
\eqref{lambdaconst} then guarantee $\|x_{\Z}\|_{\infty}\le \rho_{x}$.

Therefore, we know that $G_s(x_{\Z},0,\tau,\lambda)=0$ and the tuple
$(x_{\Z},0,\tau,\lambda)$ lies in the balls in which \eqref{5.4}
has a unique solution. By uniqueness we conclude
$g_s(\tau,\lambda)=0$  and  $x_{\Z}=x_{\Z,s}(\tau,\lambda)$ for 
$\lambda\in \Lambda$. Moreover, by the defining equations \eqref{tausolve}
and \eqref{xdef} we  obtain that equality \eqref{1.20} holds.

{\bf Step 4:} (Proof of Theorem \ref{Th1.1} (ii))\\
The radius $r_{\tau}$ will be taken such that
\begin{equation} \label{rtaucond}
18C_3(C_0 C_2 r_{\tau}^2 + C_5 r_{\tau}) \le \varepsilon.
\end{equation}
Let $\tau \in B_{r_{\tau}} \subset \ell^{\infty}(s)$, $\lambda \in \Lambda$
satisfy $g_{s}(\tau,\lambda)=0$ for some $s\in \Omega_N$ and let $x_{\Z,s}$
be given as in Theorem \ref{Th5.1}. Then clearly, the sequence
\begin{equation*} 
y_{\Z}= x_{\Z,s}(\tau,\lambda)+p_{\Z}(s)+v_{\Z}(s,\tau)
\end{equation*}
is an orbit of \eqref{a1}. It remains to show that $y_n\in U$ for
all $n \in \Z$.
Application of \eqref{Ginvest} in Theorem \ref{Th5.1} and of Lemma \ref{L5.3}
yields the estimate
\begin{equation} \label{xfinest}
\begin{aligned}
\|x_{\Z,s}(\tau,\lambda)\|_{\infty} & \le  C_0
\|G_s(x_{\Z,s}(\tau,\lambda),g_s(\tau,\lambda),\tau,\lambda) 
-G_s(0,0,\tau,\lambda)\|_\infty \\
&=  C_0 \|F(p_{\Z}(s)+v_{\Z}(s,\tau),\lambda) \|_{\infty} \\
& \le  C_0 C_2(|\lambda| +\|\tau\|_\infty^2 + \e^{-\alpha N/2}).
\end{aligned}
\end{equation} 
From \eqref{usumest} we find
\begin{equation} \label{taufinest}
\|v_{\Z}(s,\tau) \|_{\infty} \le C_5 r_{\tau}.
\end{equation}
We estimate  the distance of $y_{\Z}$ to the centers of the 
balls $B_j$ in $U$ by showing for $\nu=-n_--1$
\begin{equation} \label{intyfinest}
\|y_{p+\ell+\nu}-\bar{x}_{p}\| \le \frac{\varepsilon}{6  C_3}
\quad \text{for} \quad p=n_-+1,\ldots,n_+-1, \ell \in I(s).
\end{equation}
Note that the right-hand side is less equal $\varepsilon$ since we chose
$C_3\ge1$ in Lemma \ref{L1}.
Using \eqref{xfinest}, \eqref{taufinest} and \eqref{estmiddle} we
obtain for $\ell \in I(s)$
\begin{equation} \label{yfinest}
\begin{aligned}
\|y_{p+\ell+\nu}-\bar{x}_{p}\| & = \|x_{p+\ell,s}(\tau,\lambda) +
\sum_{m\in I(s), m\neq \ell}\bar{x}_{p+\ell-m} + v_{p+\ell}(s,\tau) \| \\
& \le  C_0 C_2(|\lambda| +\|\tau\|_\infty^2 + \e^{-\alpha N/2})+C_5 r_{\tau}
+ \frac{2 C_e}{1-\alpha_*} \alpha_*^{N_4} \\
& \le  C_0C_2 |\lambda| + (C_0C_2r_{\tau}+C_5) r_{\tau} + 
C_6 \alpha_*^{N_4}.
\end{aligned}
\end{equation}
Conditions \eqref{Nconst}, \eqref{lambdaconst} and \eqref{rtaucond}
guarantee that \eqref{intyfinest} is satisfied.

Next we consider two consecutive indices $\ell < \tilde{\ell}$ in $I(s)$
and $n=p+\ell$ for $p=n_+-1,\ldots,\tilde{p}=\tilde{\ell}-\ell+n_-+1$.
In  the first step we 
show that $y_{p+\ell+\nu}\in U_3$.  Using $p\ge n_+-1$ and 
$p+\ell-\tilde{\ell}\le n_- +1$  and \eqref{restfinis} 
we estimate similar to \eqref{yfinest} 
\begin{equation*}
\begin{aligned}
\|y_{p+\ell+\nu}\| &\le \|x_{p+\ell,s}(\tau,\lambda)\| + \|\bar{x}_{p}\|
+\|\bar{x}_{p+\ell- \tilde{\ell}}\| +
\sum_{\ell,\tilde{\ell} \neq m\in I(s)}\|\bar{x}_{p+\ell-m}\| +
 \|v_{p+\ell}(s,\tau) \| \\
& \le  C_0 C_2(|\lambda| +\|\tau\|_\infty^2 + \e^{-\alpha N/2})+C_5 r_{\tau}
+ \frac{2 C_e}{1-\alpha_*} \alpha_*^{N_4} +2 \varepsilon_0 \\
& \le  C_0C_2 |\lambda| + (C_0C_2r_{\tau}+C_5) r_{\tau} + 
C_6 \alpha_*^{N_4} + 2 \varepsilon_0\\
& \le  \frac \varepsilon {6C_3}+2\varepsilon_0 \le 3 \varepsilon_0.
\end{aligned}
\end{equation*}
Since $B_{3\varepsilon_0}\subset U_3$ this proves our assertion.
Now we can invoke Lemma \ref{L1} and find as in \eqref{tricky} 
 \begin{equation} \label{tricky2}
 \sup_{j=0,\ldots,K}\|y_{\ell+\nu+n_+-1+j}-\bar{x}_{n_++j-1}-\bar{x}_{n_-+j-K+1} \|
 \le  C_3 \left( \frac{\varepsilon}{3C_3} +|\lambda|+ \alpha_*^{N_4}\right).
\end{equation}
For $j=0,\ldots,N_4$ we have $n_-+j-K+1\le -N_4+n_-$ and hence
\[
\|\bar{x}_{n_-+j-K+1}\|\le C_e \e^{\alpha(n_-+N_4-K+1)} \le C_e \alpha_*^{N_4},
\]
while for $j=N_4+1,\ldots K$ we have $n_++j-1 \ge n_+ +N_4$ and hence
\[ 
\|\bar{x}_{n_++j-1}\| \le C_e \alpha_*^{n_+ +N_4} \le C_e\alpha_*^{N_4}.
\]
Combining this with \eqref{tricky2} we find 
\begin{equation*} 
\begin{aligned}
\|y_{p+\ell+\nu}-\bar{x}_{p}\| &\le  \frac{\varepsilon}{3} +C_3|\lambda|
+ (C_3+C_e)\alpha_*^{N_4}, \quad p=n_+-1,\ldots,n_+-1+N_4, \\
\|y_{p+\ell+\nu}-\bar{x}_{p-\tilde{\ell}+\ell}\| & \le 
\frac{\varepsilon}{3} +C_3|\lambda|+ (C_3+C_e)\alpha_*^{N_4},  
\quad  p=n_++N_4,\ldots,\tilde{p}.
\end{aligned}
\end{equation*}
We have arranged the constants in \eqref{Nconst} and \eqref{lambdaconst}
such that the right hand side is bounded by $\varepsilon$.

For the final step we note that we just have shown (cf.\ \eqref{defV0})
\[y_{p+\ell+\nu} \in 
\left\{
\begin{array}{rl}
V_\varkappa, & \text{ for } p = n_+-1,\\
V_1, & \text{ for } p = \tilde p =\tilde{\ell}-\ell+n_-+1 ,\\
V_0,& \text{ for }  p=n_+,\ldots,\tilde{\ell}-\ell+n_-    
\end{array}
\right.
\]
for two consecutive indices $\ell < \tilde{\ell}$ in $I(s)$.
On the other hand we know from \eqref{intyfinest} that
 $y_{p+\ell+\nu}\in B_{\varepsilon}(\bar{x}_p), p=n_-+1,\ldots,n_+-1$. 
Since $y_{\Z}$ is an 
$f(\cdot,\lambda)$ orbit we conclude by induction from
the definition \eqref{vjdef} 
\[ y_{p+\ell+\nu}\in V_{p-n_-}, \quad \text{for} \quad
p=n_+-1,\ldots, n_-+1.
\]
Therefore the sequence  $y_{\Z}$ lies in $ U$ which proves our assertion.
  
{\bf Step 5:} (Proof of Theorem \ref{Th1.3} (i))
The proof of \eqref{xshift},\eqref{gshift} is easily accomplished by noting the equivariance relations
\begin{equation*} 
\begin{aligned} 
I(\beta s) &=I(s)-1, \quad p_{\Z}(\beta s) = \beta p_{\Z}(s), \quad
 v_{\Z}(\beta s, \beta \tau) = \beta v_{\Z}(s,\tau),\\
w(\beta s, \beta g) & = \beta w(s,g), \quad F(\beta x_{\Z},\lambda) =
\beta F(x_{\Z},\lambda), \quad \text{and} \\
\langle \beta^{\ell}u_{\Z},x_{\Z}\rangle &= 
\langle\beta^{\ell+1}u_{\Z},\beta x_{\Z} \rangle \quad \text{for} \quad
\ell\in I(\beta s)=I(s)-1.
\end{aligned}
\end{equation*}
The assertion then follows by uniqueness from Theorem \ref{Th5.1} since 
neighborhoods are shift invariant as well.

The proof of Theorem \ref{Th1.3}(ii) will be deferred to the next section.

\section{Proof of Reduction Theorem}\label{S6}
\subsection{Nonlinear estimates}
According to \eqref{defop} the Frechet derivative of $G_s$ w.r.t.\ $x_{\Z},g$
is  given by 
\begin{equation} \label{Gderive}
\begin{aligned}
D_{(x,g)}G_s(x_{\Z},g,\tau,\lambda)(y_{\Z},h) =&
\begin{pmatrix} D_xF(p_{\Z}(s)+x_{\Z}+v_{\Z}(s,\tau),\lambda)y_{\Z} +w(s,h) \\
\langle \beta^{-\ell} u_{\Z}, y_{\Z}\rangle ,\; \ell\in I(s)
\end{pmatrix}, \\
D_xF(x_{\Z},\lambda)y_{\Z}= & (y_{n+1}-f_x(x_n,\lambda)y_{n})_{n\in \Z}.
\end{aligned}
\end{equation} 
The key step in the proof of Theorem \ref{Th5.1} will be a uniform bound
for the inverse of $D_{(x,g)}G_s(0,0,0,0)$.
\begin{lemma} \label{Linv}
There exist constants $\hat{C},\hat{N}>0$ such that for all
 $N\ge \hat{N}$, $s\in \Omega_N$ the operator
 $ D_{(x,g)}G_s(0,0,0,0)$ is invertible and satisfies 
\begin{equation*} 
\|y_{\Z}\|_{\infty}+\|h\|_{\infty} \le \hat{C} \| D_{(x,g)}G_s(0,0,0,0)(y_{\Z},h)
\|, \quad y_{\Z}\in \ell^{\infty}(\R^k), h\in \ell^{\infty}(s).
\end{equation*}
\end{lemma}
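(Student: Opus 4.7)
The plan is to reduce the multi-hump bordered variational operator at $(0,0,0,0)$ to a small perturbation of a direct sum of shifted single-hump operators, and then invert that perturbation via a Neumann series whose convergence is driven by the minimum separation $N$ of humps in $I(s)$. The pseudo-orbit $p_{\Z}(s)$ is a superposition of shifted copies of $\bar{x}_{\Z}$, so near each $\ell\in I(s)$ the coefficient $f_x(p_n(s),0)$ is close to $f_x(\bar{x}_{n-\ell},0)$ up to an error of order $\e^{-\alpha N/2}$ by the decay estimate \eqref{exdec}, exactly as in \eqref{pseudofest}.

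The first step I would carry out is the \emph{single-hump reduction}: show that
$$T_1(z_{\Z},h)=\bigl((z_{n+1}-f_x(\bar{x}_n,0)z_n)_{n\in\Z}+h\,w_{\Z},\ \langle u_{\Z},z_{\Z}\rangle\bigr)$$
is a Banach space isomorphism on $\ell^{\infty}(\R^k)\times\R$. The unbordered part $L_{\bar{x}}z_{\Z}=(z_{n+1}-f_x(\bar{x}_n,0)z_n)_{n\in\Z}$ is Fredholm of index zero on $\ell^{\infty}(\R^k)$ because its coefficient matrix admits exponential dichotomies on $\Z_{\pm}$ that converge at $\pm\infty$ to the hyperbolic dichotomy of $f_x(0,0)$, cf.\ \cite{pa88}. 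By \textbf{B4} its kernel is $\Span(u_{\Z})$, and its cokernel is represented by the adjoint bounded solution $w_{\Z}$ from \eqref{1.22} through the pairing $\langle w_{\Z},\cdot\rangle$. Injectivity of $T_1$ then follows by pairing $L_{\bar{x}}z_{\Z}+h w_{\Z}=0$ with $w_{\Z}$ to force $h=0$ and eliminating the $u_{\Z}$-direction via the scalar constraint; surjectivity follows by choosing $h=\langle w_{\Z},r_{\Z}\rangle$ to restore solvability and adjusting by a multiple of $u_{\Z}$. The open mapping theorem yields a bound $\hat{C}_1$ for $T_1^{-1}$ that depends only on $\bar{x}_{\Z},u_{\Z},w_{\Z}$.

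Next I would assemble a multi-hump approximate inverse $S_s$: given data $(r_{\Z},(c_\ell)_{\ell\in I(s)})$, localize $r_{\Z}$ via a partition of unity $\chi_{\ell}$ supported in a window of width $N$ centered at $\ell\in I(s)$, solve the shifted single-hump problem $T_1(z^{(\ell)}_{\Z},h_{\ell})=(\beta^{\ell}(\chi_{\ell}r_{\Z}),c_{\ell})$, and set $y_{\Z}=\sum_{\ell\in I(s)}\beta^{-\ell}z^{(\ell)}_{\Z}$. Disjointness of the windows together with $\|T_1^{-1}\|\le \hat{C}_1$ yields $\|S_s\|\le 2\hat{C}_1$ uniformly in $s$. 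Applying $D_{(x,g)}G_s(0,0,0,0)$ to this candidate produces the identity plus two kinds of remainders: coefficient errors $f_x(p_n(s),0)-f_x(\bar{x}_{n-\ell},0)$ contributing $\OO(\e^{-\alpha N/2})$ exactly as in \eqref{pseudofest}, and cross-contamination terms $\langle \beta^{-\tilde{\ell}}u_{\Z},\beta^{-\ell}z^{(\ell)}_{\Z}\rangle$ with $|\tilde\ell-\ell|\ge N$ contributing $\OO(\e^{-\alpha N})$ by the same estimate used in the proof of Lemma \ref{L5.2}. Hence $\|D_{(x,g)}G_s(0,0,0,0)\,S_s-I\|\le C\hat{C}_1\e^{-\alpha N/2}$, which is at most $\tfrac12$ for $N\ge \hat{N}$ large, and a Neumann series yields a two-sided inverse with the claimed uniform bound $\hat{C}\le 4\hat{C}_1$. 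The main obstacle is precisely this uniformity across $s\in\Omega_N$: a naive cumulative estimate would let constants depend on $\card(I(s))$, which may be infinite. This is avoided by insisting that the localizing windows be mutually disjoint (so the sup-norm of $y_{\Z}$ is controlled by the supremum, not the sum, of the shifted single-hump norms) and by absorbing every cross-term into the $\e^{-\alpha N/2}$ factor furnished by the exponential decay of $\bar{x}_{\Z}, u_{\Z}, w_{\Z}$ combined with the minimum hump separation $N$ built into $\Omega_N$.
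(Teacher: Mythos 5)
Your overall architecture (single-hump bordered isomorphism, localization over $I(s)$, superposition, perturbation argument uniform in $N$) matches the paper's strategy, but there are two genuine gaps.

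First, you only construct an approximate \emph{right} inverse. The bound $\|D_{(x,g)}G_s^0\,S_s-I\|\le\tfrac12$ shows via a Neumann series that $D_{(x,g)}G_s^0$ is surjective with a bounded right inverse $S_s(D_{(x,g)}G_s^0S_s)^{-1}$; it does \emph{not} give injectivity, and the inequality actually asserted in the lemma, $\|y_{\Z}\|_\infty+\|h\|_\infty\le\hat C\|D_{(x,g)}G_s^0(y_{\Z},h)\|$, is precisely an injectivity-with-bound statement. Since $I(s)$ may be infinite, you cannot appeal to Fredholmness of index $0$ to upgrade surjectivity to bijectivity. The paper therefore verifies the second hypothesis of Lemma \ref{banachlemma} separately, showing $\|B_+D_{(x,g)}G_s^0(y_{\Z},h)-(y_{\Z},h)\|\le\tfrac12\|(y_{\Z},h)\|$ for arbitrary $(y_{\Z},h)$; this is the longer and harder half of the argument, because cutting $D_{(x,g)}G_s^0(y_{\Z},h)$ with the indicator $\one_{J(\ell)}$ produces $O(1)$ boundary terms at the window endpoints (cf.\ \eqref{rightsparse}, where $r_{\ell_--1}=y_{\ell_-}$ is not small), and one must show these cancel against $y_{\Z}$ up to $\mathcal{O}(\e^{-aN_*})$ via the constant-coefficient dichotomy and the estimates \eqref{stronglocal}, \eqref{strongpoint}. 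Your proposal does not address this half at all.

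Second, your mechanism for uniformity in $\card(I(s))$ does not work as stated. Disjointness of the data windows $\chi_\ell r_{\Z}$ does not make the local solutions $z^{(\ell)}_{\Z}$ disjointly supported: each $z^{(\ell)}_{\Z}$ is defined on all of $\Z$, so at a fixed $n$ the superposition $y_n=\sum_{\ell\in I(s)}z^{(\ell)}_{n-\ell}$ receives a contribution from every $\ell$, and with only the unweighted bound $\|z^{(\ell)}_{\Z}\|_\infty\le\hat C_1(\dots)$ this sum need not converge when $I(s)$ is infinite; likewise the cross terms $\langle\beta^{-\tilde\ell}u_{\Z},\beta^{-\ell}z^{(\ell)}_{\Z}\rangle$ are then only $O(1)$, not $O(\e^{-\alpha N})$. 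What rescues the argument in the paper is the weighted version of the single-hump solvability (Lemma \ref{lsing} in the spaces $\ell^\infty_\omega$ with the plateau weights \eqref{weight}, combined with Lemma \ref{plateau}): it guarantees that each local solution decays exponentially outside its own window, which makes the superposition summable with a bound independent of $\card(I(s))$ and renders all cross terms exponentially small in $N$. You would need to prove your isomorphism statement for $T_1$ in these weighted spaces, uniformly over the admissible weights, before the assembly step can be justified.
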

Before proving Lemma \ref{Linv} we finish the proof of Theorem \ref{Th5.1}.

\begin{proof}
Let $L_x$ and $ L_{\lambda}$ be Lipschitz constants of the Jacobian 
$f_x(x,\lambda)$ with respect to $x$ and $\lambda$ in a compact ball
that contains the homoclinic orbit in its interior.
Then formula \eqref{Gderive} and the bound \eqref{usumest} directly
lead to the Lipschitz estimate
\begin{equation} \label{lipDG}
\begin{aligned}
& \|D_{(x,g)}G_s(x^1_{\Z},g_1,\tau_1,\lambda_1)-D_{(x,g)}G_s(x^2_{\Z},g_2,\tau_2,\lambda_2)\|
  \\
\le & \; L_x(\|x_{\Z}^1-x_{\Z}^2\|_{\infty}+C_5 \|\tau_1-\tau_2\|_{\infty}) + 
L_{\lambda}|\lambda_1-\lambda_2|
\end{aligned}
\end{equation}
for all $x_{\Z}^1,x_{\Z}^2\in B_{\rho_{x}}$, $g_1,g_2\in \ell^{\infty}$,
$\tau_1,\tau_2 \in B_{\rho_{\tau}}$, and $\lambda_1,\lambda_2\in \Lambda_0$
with $\rho_{x},\rho_{\tau}$ taken sufficiently small.
From Lemma \ref{Linv} and Lemma \ref{banachlemma} we obtain that 
the operators $D_{(x,g)}G_s(0,0,\tau,\lambda)$ are invertible for 
$\tau\in B_{\rho_{\tau}}, \lambda\in B_{\rho_{\lambda}}$ provided we choose
\[ N\ge \hat{N} \quad \text{and} \quad 
L_xC_5\rho_{\tau}+L_{\lambda}\rho_{\lambda} \le \frac{1}{2 \hat{C}}.\]
Then we have
\begin{equation*} 
\|D_{(x,g)}G_s(0,0,\tau,\lambda)^{-1}\| \le 2 \hat{C},\quad 
\tau\in B_{\rho_{\tau}},\lambda\in B_{\rho_{\lambda}}.
\end{equation*}
Now we apply Theorem \ref{Va2} to every operator $F=G_s(\cdot,\cdot,\tau,\lambda)$ in the spaces $Y=Z=\ell^{\infty}(\R^k)\times\ell^{\infty}(s)$.
Setting $\sigma=\frac{1}{2\hat{C}}$, $y_0=0$ and taking $L_x\rho_x\le \frac{1}{4\hat{C}}$
we find that condition \eqref{invlip} is satisfied with 
$\kappa=\frac{1}{4\hat{C}}$ and $\delta=\rho_x$.
Finally, we obtain from Lemma \ref{L5.3} for all $N\ge N_2$,
\begin{equation*}
\begin{aligned}
\|G_s(0,0,\tau,\lambda)\| &=  \|F(p_{\Z}(s)+v_{\Z}(s,\tau),\lambda)\|_{\infty}\\
& \le  C_2(|\lambda|+ \|\tau\|_{\infty}^2 + \e^{- \alpha N/2}) \\
& \le  C_2 (\rho_{\lambda}+\rho_{\tau}^2 + \e^{-\alpha N/2}).
\end{aligned}
\end{equation*}
Now we select $N_0\ge\max(\hat{N},N_2)$ and $\rho_{\lambda},\rho_{\tau}$
such that $ C_2 (\rho_{\lambda}+\rho_{\tau}^2 + \e^{-\alpha N_0}) \le
\frac{\rho_x}{4 \hat{C}} $. 
Then we find  
$\|G_s(0,0,\tau,\lambda)\|\le
 (\sigma-\kappa)\delta = \frac{\rho_x}{4 \hat{C}},
$
for all $N\ge N_0$, i.e. condition \eqref{V2} is satisfied. 
An application of Theorem \ref{Va2} finishes the proof.
\end{proof}

\begin{remark} If instead of Theorem \ref{Va2} we use a Lipschitz inverse mapping
theorem with smooth parameters (cf.\ \cite[Appendix]{ir02}) then it is
easily seen that the solutions $g_s,x_{\Z,s}$ are smooth functions of
$\tau$ and $\lambda$.
\end{remark}

\begin{proof}
{\bf (Theorem \ref{Th1.3} (ii))} \\
With $c_{\lambda}, c_{x}$ from \eqref{cdef} define  $h=h(\tau,\lambda)\in \ell^{\infty}(s)$ by 
\begin{equation*} 
h_{\ell}= c_{\lambda}\lambda +c_x \tau_{\ell}^2, \quad \ell\in I(s).
\end{equation*}
The idea is to construct elements 
$y_{\Z}=y_{\Z}(\tau,\lambda)\in \ell^{\infty}(\R^k)$ and
$\gamma=\gamma(\tau,\lambda) \in \ell^{\infty}(s)$ such that the residual
\begin{equation*} 
G_s(p_{\Z}(s)+y_{\Z}+v_{\Z}(s,\tau), h+\gamma,\tau,\lambda)
\end{equation*} 
and $\gamma$ are of higher order than $\mathcal{O}(|\lambda|+\|\tau \|^2_{\infty})$.
Then the assertion follows from \eqref{Ginvest} by comparing them
to $x_{\Z,s}(\tau,\lambda),g_s(\tau,\lambda)$.
We find $y_{\Z},\gamma$ by Taylor expansion of $F$
(we abbreviate $F_x^0=F_{x}(p_{\Z}(s),0)$ etc.)
\begin{equation*}
\begin{aligned}
F&(p_{\Z}(s)+y_{\Z}+v_{\Z}(s,\tau),\lambda) = F^0+
F_x^0(y_{\Z}+v_{\Z}(s,\tau)) +F_{\lambda}^0 \lambda \\
+& \; \frac{1}{2}F_{xx}^0(y_{\Z}+v_{\Z}(s,\tau))^2 
+F_{x,\lambda}^0(y_{\Z}+v_{\Z}(s,\tau))\lambda + \frac{1}{2} 
F_{\lambda \lambda}^0\lambda^2\\
+ & \; \mathcal{O}((|\lambda|+\|\tau\|_{\infty}+\|y_{\Z}\|_{\infty})^3).
\end{aligned}
\end{equation*} 
From the estimates \eqref{pseudofest}, \eqref{pseudouest}
in the proof of Lemma \ref{L5.3} we have
 \[ \|F^{0}\|_{\infty} =\mathcal{O}(\e^{-\alpha N/2}),
\quad \|F_x^0 v_{\Z}(s,\tau)\| = 
\mathcal{O}(\e^{-\alpha N/2}\|\tau\|_{\infty}).
\]
In a similar way, using Lipschitz constants for $f_{\lambda}$ and $f_{xx}$ we
find
\begin{equation*}
\begin{aligned}
 (F_{\lambda}^0)_n= & - \sum_{\ell\in I(s)}f_{\lambda}(\bar{x}_{n-\ell},0)
+ \mathcal{O}(\e^{-\alpha N/2}) \\
(F_{xx}^0 (v_{\Z}(s,\tau))^2)_n =& - \sum_{\ell\in I(s)} \tau_{\ell}^2
f_{xx}(\bar{x}_{n-\ell},0)(u_{n-\ell})^2 +
\mathcal{O}(\e^{-\alpha N/2}\|\tau\|_{\infty}).
\end{aligned}
\end{equation*}
Therefore, Taylor expansion of $G_s$ yields
\begin{equation} \label{taylorg}
\begin{aligned}
G_s&(p_{\Z}(s)+y_{\Z}+v_{\Z}(s,\tau), h+\gamma,\tau,\lambda) =  
D_{(x,g)}G_s^0 (y_{\Z},\gamma) +(\varphi_{\Z},0) \\
+& \; \mathcal{O}(\e^{-\alpha N/2}+ \|\tau\|_{\infty} |\lambda|
+\|\tau\|_{\infty} \|y_{\Z}\|_{\infty}  + \lambda^2  + \|y_{\Z}\|_{\infty}^2+  \|\tau\|_{\infty}^3),
\end{aligned}
\end{equation}
where
\[ \varphi_n= 
\sum_{\ell\in I(s)}\lambda(-f_{\lambda}(\bar{x}_{n-\ell},0)+
c_{\lambda} w_{n-\ell}) + \tau_{\ell}^2 (-\frac{1}{2}f_{xx}(\bar{x}_{n-\ell},0)
(u_{n-\ell})^2 +c_x w_{n-\ell}).
\]
This suggests to define $(y_{\Z},\gamma)$ by
\begin{equation} \label{defcorrect}
D_{(x,g)}G_{s}^0 (y_{\Z}, \gamma)  = -(\varphi_{\Z},0)  .
\end{equation}
From this equation and Lemma \ref{Linv} we have the estimate
\begin{equation} \label{ygamest}
 \|y_{\Z}\|_{\infty}+ \|\gamma\|_{\infty} \le \tilde{C} (|\lambda|
+\|\tau\|_{\infty}^2).
\end{equation} 
Taking the inner product of the first coordinate in \eqref{defcorrect} with  
$\beta^{-\ell}w_{\Z},\ell\in I(s)$ and using \eqref{cdef}, \eqref{rangechar}
leads to the improved estimate
\begin{equation}\label{gammaimprove}
\|\gamma\|_{\infty} \le \tilde{C}\e^{-\alpha N/2} (|\lambda|+\|\tau\|_{\infty}^2).
\end{equation}
By \eqref{ygamest}, \eqref{defcorrect} the Taylor expansion
\eqref{taylorg} of $G_s$ assumes the form
\begin{equation}\label{estres}
\|G_s(p_{\Z}(s)+y_{\Z}+v_{\Z}(s,\tau), h+\gamma,\tau,\lambda)\| =
\mathcal{O}(\e^{-\alpha N/2}+ \|\tau\|_{\infty} |\lambda|
 +\lambda^2 +  \|\tau\|_{\infty}^3).
\end{equation}
With \eqref{Ginvest} and \eqref{gammaimprove} this leads us to
the final result
\[\|x_{\Z,s}(\tau,\lambda)-y_{\Z}(\tau,\lambda)\|_{\infty}
+\|g_s(\tau,\lambda)-h(\tau,\lambda)\|_{\infty} = 
\mathcal{O}(\e^{-\alpha N/2}+ \|\tau\|_{\infty} |\lambda|
 +\lambda^2 +  \|\tau\|_{\infty}^3).
\]
\end{proof}

\subsection{Linear estimates} \label{sublinear}
In this subsection we prove Lemma \ref{Linv}.
For any two integers $n_l \le n_r$ and for any number $a \ge 0$
consider the weight function
\begin{equation} \label{weight}
\omega_n = \omega_n(a,n_l,n_r) = \min\left(\e^{a(n-n_l)},1,\e^{a(n_r-n)}\right), \quad n\in \Z.
\end{equation}
Note that $\omega_{\Z}$ has a constant plateau of arbitrary width
 with exponentially decaying tails on both sides. 
We also allow $n_l=-\infty$ and $n_r=\infty$ (but neither $n_l=n_r=-\infty$ nor $n_l=n_r=\infty$), in which case $\omega_{\Z}$ has only one-sided decay
or degenerates to the maximum norm if $n_l=-\infty$ and $n_r=\infty$. 
In the following we will suppress the dependence of the norm on the data
$n_l,n_r,a$, but all our estimates will be uniform with respect to 
\begin{equation} \label{constrestrict}
-\infty \le n_l \le n_r \le \infty \quad 0\le a \le a_0 <\alpha,
\end{equation}
where $0<a_0<\alpha$ is fixed. The following lemma shows that
exponentially decaying kernels preserve the weight.
 \begin{lemma} \label{plateau}
There exists a constant $K_0$, depending only on $a_0,\alpha$, such that
\begin{equation*} 
\sum_{m\in \Z} \e^{-\alpha|n-m-1|}\omega_m \le K_0 \, \omega_n, \quad
\text{for all} \quad n \in \Z
\end{equation*}
and for all weight functions satisfying \eqref{constrestrict}.
\end{lemma}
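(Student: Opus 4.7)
The plan is to factor out the weight at the base point $n$ by exploiting a log-Lipschitz property of $\omega_{\Z}$, and then reduce the remaining sum to a shift-invariant exponential convolution that converges uniformly in the admissible parameters. The key structural observation is that
$$
\log \omega_n = \min\bigl(a(n-n_l),\,0,\,a(n_r-n)\bigr)
$$
is a concave, piecewise linear function of $n$ with slopes in $\{+a,0,-a\}$, and is therefore $a$-Lipschitz in $n$. The same statement holds in the degenerate cases $n_l=-\infty$ or $n_r=\infty$, since one of the three branches is then simply absent. Exponentiating gives the two-point bound
$$
\omega_m \le \e^{a|m-n|}\,\omega_n, \qquad n,m\in \Z.
$$

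First I would establish this inequality carefully, distinguishing (if one wants to avoid invoking piecewise linearity) the cases $m\le n$ and $m\ge n$ and checking each of the nine combinations of which of the three branches $\e^{a(\cdot-n_l)}$, $1$, $\e^{a(n_r-\cdot)}$ realizes the minimum at $n$ and at $m$; in every case the ratio $\omega_m/\omega_n$ is bounded by $\e^{a|m-n|}$ because the exponents differ by at most $a|m-n|$.

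Second, I would plug this into the convolution and substitute $k=n-m$:
$$
\sum_{m\in \Z}\e^{-\alpha|n-m-1|}\omega_m
\le \omega_n \sum_{m\in \Z}\e^{-\alpha|n-m-1|+a|n-m|}
= \omega_n\sum_{k\in \Z}\e^{-\alpha|k-1|+a|k|}.
$$
The final sum no longer depends on $n$, $n_l$, $n_r$, nor on $\omega$. Using $|k-1|\ge |k|-1$ each summand is bounded by $\e^{\alpha}\e^{-(\alpha-a)|k|}\le \e^{\alpha}\e^{-(\alpha-a_0)|k|}$, and since $\alpha-a_0>0$ the geometric series converges to a value $K_0=K_0(\alpha,a_0)$ independent of all remaining data. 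This yields the claim.

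I do not anticipate a real obstacle: the only non-routine step is recognizing that $\log\omega_n$ is $a$-Lipschitz, and after that the bound reduces to a standard convolution estimate of an exponentially decaying kernel against a slower-decaying weight. The hypothesis $a_0<\alpha$ enters exactly once, precisely to guarantee convergence of the geometric series and hence uniformity of $K_0$ in $a\in[0,a_0]$.
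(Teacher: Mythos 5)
Your proof is correct, and it takes a genuinely different route from the paper. The paper proves the lemma by brute force: it fixes the position of $n$ relative to $n_l,n_r$ (writing out only the case $n\le n_l$ and leaving the other two to the reader), splits the sum over $m$ into the four ranges $m\le n-1$, $n\le m\le n_l-1$, $n_l\le m\le n_r-1$, $m\ge n_r$, and evaluates each resulting geometric series explicitly. You instead observe that $\log\omega_n=\min\bigl(a(n-n_l),0,a(n_r-n)\bigr)$ is a minimum of $a$-Lipschitz affine functions and hence $a$-Lipschitz (taking the minimizing branch at $n$ gives $\log\omega_m\le\log\omega_n+a|m-n|$ directly, so the nine-case check is not even needed), which yields $\omega_m\le \e^{a|m-n|}\omega_n$; factoring out $\omega_n$ reduces everything to the single shift-invariant sum $\sum_{k\in\Z}\e^{-\alpha|k-1|+a|k|}\le \e^{\alpha}\bigl(1+\e^{-(\alpha-a_0)}\bigr)/\bigl(1-\e^{-(\alpha-a_0)}\bigr)$, where $a\le a_0<\alpha$ enters exactly where it should. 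Your argument handles the degenerate cases $n_l=-\infty$ or $n_r=\infty$ with no extra work (a branch simply disappears from the minimum), is visibly uniform in all the data of \eqref{constrestrict}, and replaces the paper's casework with one structural observation; the paper's computation, in exchange, produces its constant $K_0$ by direct evaluation. Both constants depend only on $a_0,\alpha$, as required.
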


\begin{proof} We consider
\[\delta_n = \sum_{m\in \Z}\omega_n^{-1}\e^{-\alpha|n-m-1|}\omega_m \]
only for $n\le n_l$ and leave cases $n_l+1\le n \le n_r$ and
$n_r+1 \le n$ to the reader.
\begin{equation*} 
\begin{aligned}
\delta_n = & \sum_{m \le n-1} \e^{a(n_l-n)-\alpha(n-1-m)+a(m-n_l)} 
+  \sum_{m=n}^{n_l-1}\e^{a(n_l-n)-\alpha(m+1-n)+a(m-n_l)} \\
+ & \sum_{m=n_l}^{n_r-1}\e^{a(n_l-n)-\alpha(m+1-n)} 
+  \sum_{m\ge n_r}\e^{a(n_l-n)-\alpha(m+1-n)+ a(n_r-m)}\\
=& \e^{-a} \sum_{m\le n-1}\e^{-(\alpha+a)(n-1-m)}+
\e^{-\alpha}\sum_{m=n}^{n_l-1}\e^{-(\alpha-a)(m-n)} \\
+ &  \e^{-(\alpha-a)(n_l-n)-\alpha} \sum_{m=n_l}^{n_r-1}\e^{-\alpha(m-n_l)}
+ \e^{-(\alpha-a)(n_l-n)+\alpha(n_l-n_r-1)}\sum_{m\ge n_r}\e^{-(\alpha +a)(m-n_r)}\\
\le & \frac{\e^{-a}+\e^{-\alpha}}{1 - \e^{-(\alpha+a)}} +
\e^{-\alpha}\left(\frac{1}{1-\e^{-(\alpha-a)}} + \frac{1}{1- \e^{-\alpha}} \right).
\end{aligned}
\end{equation*}
A suitable constant for all cases is $K_0 = 2\frac{1+\e^{-\alpha}}{1-\e^{-\alpha}}
+ \e^{a_0}+ \frac{\e^{a_0}}{1-\e^{-(\alpha-a_0)}}$.
\end{proof}

With the weights from above we consider the Banach space 
\begin{equation*} 
\ell_{\omega}^{\infty}= \{ y_{\Z}\in (\R^k)^{\Z}:
\|y_{\Z}\|_{\omega} =\sup_{n\in \Z}\|y_n \omega_n^{-1}\|< \infty \}.
\end{equation*} 
Taking the exponent $\alpha$ from \eqref{exdec}, \eqref{exdectrans} we have
 the following result for the variational equation \eqref{vareq}.
\begin{lemma} \label{lsing}
Let conditions {\bf A1 -  A3} and {\bf B4} hold. Then the linear system
\begin{equation} \label{vartaneq}
\begin{aligned}
y_{n+1}-f_x(\bar{x}_n,0)y_n + h w_n & =  r_n, \quad n \in \Z \\
\langle u_{\Z}, y_{\Z}\rangle & =  \gamma 
\end{aligned}
\end{equation}
has a unique solution $(y_{\Z},h)\in \ell_{\omega}^{\infty} \times \R$ for
every $(r_{\Z},\gamma)\in \ell_{\omega}^{\infty} \times \R$. Moreover there
is a constant $C^*$ such that for all weights \eqref{weight} with \eqref{constrestrict},
\begin{equation} \label{singgen}
\|y_{\Z}\|_{\omega}+|h| \le C^* (\|r_{\Z}\|_{\omega}+ |\gamma|\|u_{\Z}\|_{\omega}).
\end{equation}
If  $n_l\le 0\le n_r$, then \eqref{singgen} simplifies to
\begin{equation*}
\|y_{\Z}\|_{\omega} + |h| \le C^* \left( \|r_{\Z}\|_{\omega} +|\gamma|\right).
\end{equation*}
\end{lemma}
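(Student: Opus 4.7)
The plan is to solve \eqref{vartaneq} first on the unweighted space $\ell^\infty(\R^k)\times \R$ by a Lyapunov--Schmidt / Fredholm argument, then to extract a Green's function with exponential decay, and finally to transfer the bound to the weighted space via Lemma \ref{plateau}.

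First I would set
$L(y_\Z,h) = \bigl((y_{n+1}-f_x(\bar x_n,0)y_n + h w_n)_{n\in\Z},\,\langle u_\Z,y_\Z\rangle\bigr)$
as an operator $\ell^\infty(\R^k)\times \R \to \ell^\infty(\R^k)\times \R$; the pairing $\langle u_\Z, y_\Z\rangle$ is well defined since $u_\Z \in \ell^{1}$ by \eqref{exdec}. By Assumption \textbf{B4} and standard Fredholm theory for linear difference equations with asymptotically hyperbolic coefficients (cf.\ Palmer \cite{pa88}), $D_xF(\bar x_\Z,0)$ has one-dimensional kernel $\Span(u_\Z)$ and one-dimensional cokernel characterized by $\langle w_\Z,\cdot\rangle=0$. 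Because $\langle w_\Z,w_\Z\rangle=1$ and $\langle u_\Z,u_\Z\rangle=1$, bordering by $h \mapsto h w_\Z$ on the range side and by $\langle u_\Z,\cdot\rangle$ on the domain side makes $L$ a continuous bijection of $\ell^\infty(\R^k)\times \R$ onto itself, with uniform bound $\|L^{-1}\|\le C$.

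Next I would extract decay. By \textbf{A3} the constant-coefficient limit $y_{n+1}=f_x(0,0)y_n$ is hyperbolic with spectral gap, and by the roughness theorem applied to the perturbation $f_x(\bar x_n,0)-f_x(0,0) = \OO(\e^{-\alpha|n|})$, the nonautonomous equation has exponential dichotomies on both $\N$ and $-\N$ with a uniform rate $\alpha$. Patching these with the bordering $hw_\Z$ and the constraint $\langle u_\Z,y_\Z\rangle=\gamma$ yields a Green's function representation
$y_n \;=\; \sum_{m\in\Z} G(n,m+1)\,r_m \;+\; \gamma\,\psi_n, \qquad h \;=\; \sum_{m\in\Z} \phi_m^T r_m \;+\; \theta\gamma,$
with uniform bounds $\|G(n,m)\|\le K\e^{-\alpha|n-m|}$, $\|\psi_n\|\le K\e^{-\alpha|n|}$, $\|\phi_m\|\le K\e^{-\alpha|m|}$, $|\theta|\le K$. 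In fact $\psi_\Z$ is (up to a correction) a multiple of $u_\Z$, and $\phi_\Z$ is (up to a correction) a multiple of $w_\Z$, cf.\ \eqref{exdectrans}.

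With this representation, Lemma \ref{plateau} immediately gives the weighted estimate: for $0\le a\le a_0<\alpha$,
$\omega_n^{-1}\|y_n\| \;\le\; K \omega_n^{-1} \sum_{m}\e^{-\alpha|n-m-1|}\omega_m\,\|r_\Z\|_\omega + K|\gamma|\,\omega_n^{-1}\e^{-\alpha|n|} \;\le\; KK_0\|r_\Z\|_\omega + K|\gamma|\,\|u_\Z\|_\omega,$
and analogously $|h|\le K K_0\|r_\Z\|_\omega + K|\gamma|$, which together yield \eqref{singgen}. When $n_l\le 0 \le n_r$, the peak of $u_\Z$ lies in the plateau of $\omega_\Z$, so $\|u_\Z\|_\omega \le C_e \sup_n \e^{-\alpha|n|}\omega_n^{-1}$ is bounded by a constant depending only on $a_0,\alpha,C_e$, and the simplified bound follows by absorbing this factor into $C^*$.

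The main obstacle I anticipate is ensuring true uniformity of all constants ($\|L^{-1}\|$, the dichotomy constants from roughness, and the Green's function decay rate $\alpha$) across the full range \eqref{constrestrict} of weights, in particular as the plateau width $n_r-n_l$ tends to infinity and as $n_l,n_r$ drift far from the hump of $\bar x_\Z$. This requires that the dichotomies on $\N$ and $-\N$ as well as the Lyapunov--Schmidt bordering depend only on $f$ and on the exponent $\alpha$ in \eqref{exdec}, and not on any scale set by the weight.
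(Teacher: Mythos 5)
Your proposal is correct and follows essentially the same route as the paper: unweighted solvability of the bordered system via Palmer's Fredholm theory and the bordering lemma, an exponentially decaying Green's function built from the half-line dichotomies, and Lemma \ref{plateau} to transfer the bound to $\ell^\infty_\omega$, with the same resolution of the uniformity issue you flag (all dichotomy and bordering constants depend only on $f$ and $\alpha$, never on $n_l,n_r,a$). One cosmetic point: the coefficient of $\gamma$ in $y_\Z$ is exactly $u_\Z$ (not merely some $\psi_\Z$ with $\|\psi_n\|\le K\e^{-\alpha|n|}$), which is what legitimizes the term $|\gamma|\,\|u_\Z\|_\omega$ in \eqref{singgen}; bounding $\omega_n^{-1}\e^{-\alpha|n|}$ by $\|u_\Z\|_\omega$ as written would require a lower bound on $\|u_n\|$.
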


 \begin{proof} Let us abbreviate $A_n=f_{x}(\bar{x}_n,0)$,
$\mathcal{L}y_{\Z}= (y_{n+1}-A_n y_n)_{n\in \Z}$ and denote by
$\Phi$ the solution operator \eqref{defPhi} of \eqref{app4}.  
From \cite[Section 2]{pa88} (see also \cite{kl98a})
 we obtain the following facts. Equation \eqref{app4}
has an exponential dichotomy for $n\ge 0$ with data $(K,\alpha,P_n^{+s},P_n^{+u})$
and for $n\le 0$ with data $(K,\alpha,P_n^{-s},P_n^{-u})$ (see Definition 
\ref{b14}). Due to {\bf B4} we have $\range(P_0^{+s})\cap \range(P_0^{-u}) =
\mathrm{span}\{u_0\}=:Y_0$ and there exist decompositions
\begin{equation} \label{decompspace}
\range(P_0^{+s}) = Y_0 \oplus Y_+, \quad \range(P_0^{-u})=Y_0 \oplus Y_- ,
\quad Y_+ \cap Y_- = \{ 0\},
\end{equation}
where 
\begin{equation*}
\begin{aligned}
 k_s&=  \mathrm{rank}(P_n^{+s})= \dim(Y_+)+1,\quad (n\ge 0),\\
k_u & =  \mathrm{rank}(P_n^{-u})=\dim(Y_-)+1,\quad (n\le 0), \\
k_u & =  k-k_s, \quad P_n^{+s} + P_n^{+u} = I \,(n\ge 0), \;
P_n^{-s} + P_n^{-u} = I \,(n\le 0).
\end{aligned}
\end{equation*}
The operator $\mathcal{L}:\ell^{\infty}(\R^k)\rightarrow \ell^{\infty}(\R^k)$
is Fredholm of index $0$ with 
\begin{equation} \label{rangechar}
\kernel(\mathcal{L})=\Span\{u_{\Z}\}, \quad \range(\mathcal{L}) = \{ r_{\Z}\in \ell^{\infty}(\R^k): 
\langle w_{\Z},r_{\Z}\rangle=0 \}.
\end{equation}
One can also choose the ranges of $P_0^{+u}$ and
$P_0^{-s}$ such that, in addition to \eqref{decompspace},
\begin{equation} \label{modproj}
\R^k=Y_{0}\oplus Y_+ \oplus Y_- \oplus Y_1, \quad \range(P_0^{+u})=Y_- \oplus Y_1,
\quad \range(P_0^{-s})= Y_+ \oplus Y_1.
\end{equation}
Here $\dim Y_1=1$ and one can take $Y_1=\mathrm{span}\{w_{-1}\}$.
Following \cite[Lemma 2.7]{pa88} the general bounded solution
of $y_{n+1}-A_n y_n = r_n, n\ge 0 $ is given by 
\begin{equation} \label{yplusrep}
 y^+_{n} = \Phi(n,0) \eta_+ + \sum_{m\ge 0}G_+(n,m+1) r_m, \quad n\ge 0,
\quad \eta_+ \in \range(P_0^{+s})
\end{equation}
with the Green's function defined as follows
\begin{equation} \label{greenplus}
G_+(n,m) = \left\{
\begin{array}{cc}
\Phi(n,m) P_m^{+s}, & 0 \le m \le n, \\
-\Phi(n,m) P_m^{+u}, & 0\le n<m.
\end{array}
\right.
\end{equation}
Similarly, all bounded solutions of  $y_{n+1}-A_n y_n = r_n, n\le -1 $
are given by
\begin{equation} \label{yminusrep}
 y^-_{n} = \Phi(n,0) \eta_- + \sum_{m\le -1}G_-(n,m+1) r_m, \quad n\le 0,
\quad \eta_- \in \range(P_0^{-u}),
\end{equation}
where
\begin{equation} \label{greenminus}
G_-(n,m) = \left\{
\begin{array}{cc}
\Phi(n,m) P_m^{-s}, & m \le n \le 0, \\
-\Phi(n,m) P_m^{-u}, &  n<m \le 0.
\end{array}
\right.
\end{equation}
By the exponential dichotomies the Green's functions satisfy
\begin{equation} \label{greenest}
 \| G_{\pm}(n,m)\| \le K \e^{-\alpha|n-m|}, \quad n,m \in \Z_{\pm}.
\end{equation}
With $u_{\Z}^T y_{\Z}:= \langle u_{\Z},y_{\Z}\rangle$ we
 may write \eqref{vartaneq} in block operator form as
\begin{equation} \label{blockop}
\begin{pmatrix} \mathcal{L} & w_{\Z} \\
                u_{\Z}^T & 0 
\end{pmatrix}
\begin{pmatrix} y_{\Z} \\ h \end{pmatrix}
= \begin{pmatrix} r_{\Z} \\ \gamma \end{pmatrix}.
\end{equation}
By the bordering lemma \cite[Appendix]{be90} the block operator is Fredholm 
of the same index $0$ as $\mathcal{L}$ and, using \eqref{rangechar}, it
is a linear homeomorphism in $\ell^{\infty}(\R^k)\times \R$. Since $\ell_{\omega}^{\infty}$
is a closed subspace of  $\ell^{\infty}(\R^k)$ it suffices to prove that
the unique solution $(y_{\Z},h)$  of \eqref{blockop} in $\ell^{\infty}(\R^k)\times \R$  satisfies the estimate \eqref{singgen}
in case $r_{\Z} \in  \ell_{\omega}^{\infty}$.

Take the inner product of the first equation of \eqref{vartaneq} with $w_{\Z}$.
Then \eqref{rangechar} and the normalization $\|w_{\Z}\|_{\ell^2}=1$
show $h = \langle w_{\Z},r_{\Z}\rangle$. Therefore, by \eqref{exdectrans}, 
\begin{equation} \label{tauomegaest}
 |h| \le C_e \|r_{\Z}\|_{\omega} \sum_{n\in \Z}\e^{-\alpha |n|}\omega_n
\le C_e \frac{1+\e^{-\alpha}}{1-\e^{-\alpha}} \|r_{\Z}\|_{\omega} .
\end{equation}
With this $h$ we have $z_{\Z}:=r_{\Z}-h w_{\Z} \in \range(\mathcal{L})$ by
\eqref{rangechar}. Below we will construct a special solution 
$\hat{y}_{\Z}\in \ell^{\infty}(\R^k)$ of $\mathcal{L} \hat{y}_{\Z}=z_{\Z}$ such that
for some constant $C>0$
\begin{equation} \label{solspecial}
\|\hat{y}_{\Z} \|_{\omega} \le C \|z_{\Z} \|_{\omega}.
\end{equation}
By \eqref{rangechar} and \eqref{unormalize} the solution of \eqref{blockop} 
is given by
\begin{equation} \label{solformula}
(y_{\Z},h)= (\hat{y}_{\Z}+c u_{\Z},h), \quad c=\gamma-\langle u_{\Z},\hat{y}_{\Z}\rangle, 
\quad h = \langle w_{\Z},r_{\Z}\rangle.
\end{equation}
  From the exponential decay \eqref{exdec}, \eqref{exdectrans}
and Lemma \ref{plateau}, applied to the kernel 
$\e^{-\alpha(|n|+|m|)} \le \e^{\alpha}\e^{-\alpha|n-m-1|}$, we obtain
with $C'=C_e \e^{\alpha}K_0$,
\[ \|\langle u_{\Z},\hat{y}_{\Z}\rangle u_{\Z}\|_{\omega} \le
C' \|\hat{y}_{\Z}\|_{\omega}, \quad
\|\langle w_{\Z},r_{\Z}\rangle w_{\Z}\|_{\omega} \le
C' \|r_{\Z}\|_{\omega}.
\]
Then \eqref{tauomegaest}-\eqref{solformula} yield the assertion
\begin{equation*}
\begin{aligned}
\|y_{\Z}\|_{\omega} \le  (1+C')\|\hat{y}_{\Z}\|_{\omega}+ |\gamma| \|u_{\Z}\|_{\omega} \le  C(1+C')^2\|r_{\Z}\|_{\omega} + |\gamma| \|u_{\Z}\|_{\omega}.
\end{aligned}
\end{equation*}
It remains to construct $\hat{y}_{\Z}$ with $\mathcal{L}\hat{y}_{\Z}=z_{\Z}$ and
\eqref{solspecial}. We determine $\eta_+ \in \range(P^{+s}_0)$ and
$\eta_- \in \range(P^{-u}_0)$ such that $\hat{y}_n=y^+_n, n\ge 0$ with $y^+_n$ from
\eqref{yplusrep}, and $\hat{y}_n=y^-_n, n\le 0$ with $y^-_n$ from \eqref{yminusrep},
and such that the definitions coincide at $n=0$. The last condition
holds if and only if
\begin{equation} \label{jumpeta}
 \eta_+ - \eta_- = \sum_{m\le-1}G_-(0,m+1)z_m -\sum_{m\ge 0}G_+(0,m+1)z_m
=: \Delta_0.
\end{equation}
By \eqref{greenplus}, \eqref{greenminus}, \eqref{modproj} the first sum on the 
right is in 
$Y_+\oplus Y_1$ and the second sum is in $Y_-\oplus Y_1$ while the left-hand
side is in $Y_0\oplus Y_-\oplus Y_+$. Since $z_{\Z}\in \range(\mathcal{L})$ holds,
equation \eqref{jumpeta} has a solution and thus $\Delta_0 \in Y_+\oplus Y_-$.
We conclude from \eqref{modproj} that $\eta_+=P^{+s}_0 \Delta_0$ and
$\eta_-=- P^{-u}_0 \Delta_0$ are the unique solutions of \eqref{jumpeta}.
 With \eqref{greenest} and Lemma \ref{plateau} we estimate for $n\ge 0$
\begin{equation*}
\|\hat{y}_n\| \omega_n^{-1} \le  K \|z_{\Z}\|_{\omega}
\sum_{m\ge 0}\omega_n^{-1} \e^{-\alpha|n-m-1|}\omega_m 
\le   K K_0 \|z_{\Z}\|_{\omega}.
\end{equation*}
 An analogous estimate holds for $n \le 0$ and this completes the proof.
\end{proof}

\begin{proof} {\bf (Lemma \ref{Linv})}
We use Lemma \ref{banachlemma} and construct an approximate right inverse
$B_+$ of $D_{(x,g)}G_s^0=D_{(x,g)}G_s(0,0,0,0)$. For any $\ell\in I(s)$ we
define the interval
$J(\ell) =  \{\ell_-,\ldots,\ell,\ldots,\ell_+ \}$ where right and left
 neighbors are given by
\begin{equation} \label{lplusdef}
\ell_+  =  \left\{ \begin{array}{cc}
          \infty, & \text{if}\quad \ell=\max I(s),\\ 
                    \ell+ N_*, & \text{otherwise},
                    \end{array}
            \right. 
\end{equation}
\begin{equation}\label{lminusdef}
\ell_-  =  \left\{ \begin{array}{cc}
           
          -\infty, & \text{if}\quad \ell=\min I(s),\\ 
                    \max\{\tilde{\ell}_+: \tilde{\ell} \in I(s),
 \tilde{\ell}<\ell\}+1, & \text{otherwise}.
                    \end{array}
            \right. 
\end{equation}
Note that the sets $J(\ell),\ell\in I(s)$ define a partitioning of $\Z$.
In the following we consider $N\ge \hat{N}=2 N_*+1$ which implies
$\ell-\ell_- \ge \ell_+-\ell \ge N_*$. During the proof $N_*$ will be taken
sufficiently large. 
Given an element
$(r_{\Z},\gamma)\in \ell^{\infty}(\R^k)\times \ell^{\infty}(s)$, we decompose
\[r_{\Z}= \sum_{\ell \in I(s)} \one_{J(\ell)} r_{\Z}, \quad
\text{where} \quad (\one_{J(\ell)})_n =\left\{
\begin{array}{cc} 1, & n\in J(\ell), \\ 0, & \text{else} \end{array}
\right.
\]
 is the characteristic function of $J(\ell)$.
Let $B_0$ denote the solution operator of \eqref{blockop}, then we set
\begin{equation} \label{solvelocal}
(y^{\ell}_{\Z},h_{\ell}) = B_0(\beta^{\ell}\one_{J(\ell)}r_{\Z}, \gamma_{\ell})
\quad \text{for} \quad \ell\in I(s),
\end{equation}
and define $B_+$ as a blockwise inverse via
\begin{equation} \label{bplusdefine}
B_+(r_{\Z},\gamma)= (y_{\Z},h)= (\sum_{\ell\in I(s)}\beta^{-\ell}y^{\ell}_{\Z},
(h_{\ell})_{\ell \in I(s)}).
\end{equation}
Using Lemma \ref{lsing} with the settings $n_l=\ell_- -\ell$, 
$n_r=\ell_+-\ell$ we obtain a bound 
 \begin{equation} \label{ylbound}
\|y_{\Z}^{\ell}\|_{\omega}+ |h_{\ell}| \le C^*\left(\|\beta^{\ell} 
\one_{J(\ell)}r_{\Z}\|_{\omega}+ |\gamma_{\ell}|\right) \le
C^*\left(\|r_{\Z}\|_{\infty}+ |\gamma_{\ell}| \right).
\end{equation}
Let us abbreviate the weights from \eqref{weight},
\begin{equation*}
\omega_{n,\ell}=\omega_{n}(a,\ell_-,\ell_+) , \quad n\in \Z, \ell\in I(s).
\end{equation*} 
 Then equation \eqref{bplusdefine} and \eqref{ylbound} lead to the estimate
\begin{equation*} 
\begin{aligned}
\|y_n\| \le  \|\sum_{\ell\in I(s)} y^{\ell}_{n-\ell} \| 
\le  C^* \left(\|r_{\Z}\|_{\infty}+\|\gamma\|_{\infty}\right) 
\sum_{\ell \in I(s)} \omega_{n,\ell}, 
\end{aligned}
\end{equation*}

For $2\e^{-aN^*}\le 1$  the last sum is
 bounded by $1+4 \e^{-a}$ and \eqref{ylbound} yields
\begin{equation} \label{bplusest}
\|B_+(r_{\Z},\gamma)\|= \|y_{\Z}\|_{\infty}+\|h\|_{\infty}
\le (2C^*+1+4 \e^{-a})(\|r_{\Z}\|_{\infty}+\|\gamma\|_{\infty}).
\end{equation}
In the next step we show for $N^*$ sufficiently large,
\begin{equation} \label{contractright}
\|(r_{\Z},\gamma)-D_{(x,g)}G_s^0B_+(r_{\Z},\gamma)\| \le \frac{1}{2}
(\|r_{\Z}\|_{\infty}+\|\gamma\|_{\infty}),
\end{equation}
which by \eqref{estinvA}, \eqref{bplusest} gives the desired estimate 
\[ \|(D_{(x,g)}G_s^0)^{-1}\| \le 2(2C^*+1+4 \e^{-a}).
\]
We introduce $(\tilde{r}_{\Z},\tilde{\gamma})=
(r_{\Z},\gamma)-D_{(x,g)}G_s^0(y_{\Z},h)$.
From \eqref{bplusdefine} and the variational equation in \eqref{solvelocal} we have
\begin{equation*}
\begin{aligned}
\tilde{r}_n =& r_n - \Big(\sum_{m\in I(s)}y_{n+1-m}^m -f_x(\sum_{p\in I(s)}\bar{x}_{n-p},0)y_n+\sum_{m\in I(s)}h_mw_{n-m} \Big)\\
= & \sum_{m\in I(s)}\Big[ f_x(\sum_{p\in I(s)}\bar{x}_{n-p},0)
-f_x(\bar{x}_{n-m},0) \Big] y_{n-m}^m.
\end{aligned}
\end{equation*}
For $n\in \Z$ there exists a unique $\ell\in I(s)$ such that $n\in J(\ell)$.
Define the neighborhood $\mathcal{U}(\ell)$ of $\ell$ by 
$\mathcal{U}(\ell)=\{\hat{\ell},\ell,\tilde{\ell} \}$ with left neighbor $\hat{\ell}= \sup\{m\in I(s):m<\ell\}$ and 
right neighbor $\tilde{\ell}= \inf\{m\in I(s):m>\ell\}$ (as usual let
$\hat{\ell}=-\infty$ and $\tilde{\ell}=\infty$ if the sets are empty).
Using the Lipschitz constant $L_x$ of $f_x$ 
and  \eqref{pseudoest}, \eqref{ylbound} we obtain
\begin{equation*} 
\begin{aligned}
\|\tilde{r}_n \| &\le  L_x \Big( \sum_{m\in I(s)\setminus \mathcal{U}(\ell)}
 \|y_{n-m}^m\| \sum_{m\neq p\in I(s)} \|\bar{x}_{n-p}\|
 +\sum_{m\in\mathcal{U}(\ell)}
\|y_{n-m}^m\| \sum_{m\neq p\in I(s)}\| \bar{x}_{n-p}\| \Big) \\
& \le  L_{x}C^*(\|r_{\Z}\|_{\infty}+\|\gamma\|_{\infty})
\Big\{ \bar{C} \sum_{m\in I(s)\setminus \mathcal{U}(\ell)}\omega_{n,m}
+ \sum_{\ell\neq p \in I(s)}\|\bar{x}_{n-p}\| \Big. \\ 
& +  \Big. \omega_{n,\tilde{\ell}}\Big(\|\bar{x}_{n-\ell}\|
+\sum_{p\in I(s)\setminus\{\ell,\tilde{\ell}\}}\|\bar{x}_{n-p}\|\Big)
+\omega_{n,\hat{\ell}}\Big(\|\bar{x}_{n-\ell}\|
+\sum_{p\in I(s)\setminus\{\ell,\hat{\ell}\}}\|\bar{x}_{n-p}\|\Big)
\Big\}. 
\end{aligned}
\end{equation*}
We show that the terms in $\{ \ldots\}$ are of order 
$\mathcal{O}(\e^{-a N*})$ so that the contraction estimate \eqref{contractright} holds for the first component if $N^*$ is sufficiently large.
A critical term on the right-hand side is
\begin{equation}\label{critterm}
\omega_{n,\tilde{\ell}}\|\bar{x}_{n-\ell}\| \le  C_e 
\e^{-\alpha|n-\ell|+a(n-\tilde{\ell}_-) }
\le  C_e \e^{a(\ell -\tilde{\ell}_-)} \le C_e \e^{-a N^*}, \quad
\ell_- \le n \le \ell_+.
\end{equation} 
The term $\omega_{n,\hat{\ell}}\|\bar{x}_{n-\ell}\|$ is handled 
analogously. Further, 
\begin{equation*} \label{homsumest}
\begin{aligned}
\sum_{\ell\neq p \in I(s)}\|\bar{x}_{n-p}\| & \le 
C_e\Big( \sum_{\ell>p\in I(s)}\e^{-\alpha(n-p)} + \sum_{\ell<p\in I(s)}
\e^{-\alpha(p-n)} \Big) \\
& \le  C_e\Big( \sum_{\ell>p\in I(s)}\e^{-\alpha(\ell_--p)} + 
\sum_{\ell<p\in I(s)}
\e^{-\alpha(p-\ell_+)} \Big) \le 2 C_e \frac{\e^{-\alpha N_*}}{1- \e^{-\alpha}}.
\end{aligned}
\end{equation*}
The remaining terms allow similar estimates since $n$ always lies in 
an exponential decaying tail of the weights and of the shifted homoclinic orbits. 
Finally, we use \eqref{exdec} and \eqref{solvelocal}, \eqref{ylbound} for $\ell \in I(s)$, 
\begin{equation*} 
\begin{aligned}
|\gamma_{\ell}- \langle \beta^{-\ell}u_{\Z},y_{\Z} \rangle| &= 
|\sum_{n\in \Z}\langle u_{n-\ell},\sum_{\ell \neq m \in I(s)} y_{n-m}^m 
\rangle| \\
& \le 
C_e C^*(\|r_{\Z}\|_{\infty}+\|\gamma\|_{\infty}) \sum_{n\in \Z} \e^{-\alpha|n-\ell|}
\sum_{\ell\neq m\in I(s)}\omega_{n,m}.
\end{aligned}
\end{equation*}
We estimate the remaining sum by using \eqref{critterm}
\begin{equation*} 
\begin{aligned}
\sum_{n\in \Z} \e^{-\alpha|n-\ell|}
\sum_{\ell\neq m\in I(s)}\omega_{n,m}  & \le  
\sum_{n=\ell_-}^{\ell_+}\e^{-\alpha|n-\ell|}\Big(\omega_{n,\tilde{\ell}}+
\omega_{n,\hat{\ell}} + \sum_{m\in I(s)\setminus \mathcal{U}(\ell)} \omega_{n,m}
\Big) \\
& \quad \; +  C \Big(\sum_{n=\ell_+}^{\infty}\e^{-\alpha(n-\ell)} + 
\sum_{n=-\infty}^{\ell_- - 1}\e^{-\alpha(\ell -n)}\Big). 
\end{aligned}
\end{equation*}
The last two terms are $\mathcal{O}(\e^{-\alpha N^*})$ since $\ell_+-\ell \ge N^*$
and $\ell -\ell_- \ge N^*$.
Furthermore, we have for $\ell_- \le n \le \ell_+$ 
\begin{equation*}
\begin{aligned}
 \sum_{m\in I(s)\setminus \mathcal{U}(\ell)} \omega_{n,m} \le &
\sum_{\tilde{\ell}<m\in I(s)} \e^{a(n-m_-)} + \sum_{\hat{\ell}>m \in I(s)}
\e^{a(m_+-n)} \\
\le &\sum_{\tilde{\ell}<m\in I(s)} \e^{a(\ell_- -m_-)} + \sum_{\hat{\ell}>m \in I(s)}
\e^{a(m_+-\ell_+)} \le \frac{2 \e^{-2a N^*}}{1-\e^{-a N^*}}.
\end{aligned}
\end{equation*}
Our final estimate is
\begin{equation*} 
\begin{aligned}
\sum_{n=\ell_-}^{\ell_+} \e^{-\alpha|n-\ell|}\omega_{n,\tilde{\ell}}
 & \le  \sum_{n=\ell_-}^{\ell} \e^{-\alpha(\ell-n)+a(n-\tilde{\ell}_-)} 
+ \sum_{n=\ell+1}^{\ell_+} \e^{-\alpha(n-\ell)+a(n-\tilde{\ell}_-)} \\
& \le  \frac{\e^{-a N^*}}{1-\e^{-\alpha}} + \frac{\e^{-a N^*}}{1-\e^{-(\alpha-a)}}.
\end{aligned}
\end{equation*} 
The term $\sum_{n=\ell_-}^{\ell_+} \e^{-\alpha|n-\ell|}\omega_{n,\hat{\ell}}$
is estimated in a similar way. This finishes the proof of 
\eqref{contractright}.

We take the same $B_-=B_+$ as an approximate left inverse in Lemma
\ref{banachlemma}. It is convenient to require
$0<2a\le a_0<\alpha$ instead of \eqref{constrestrict}.
  Given $(y_{\Z},h)\in \ell^{\infty}(\R^k)\times 
\ell^{\infty}(s)$, we  set
\begin{equation*} 
(\eta_{\Z},t)= B_+ D_{(x,g)}G_s^0 (y_{\Z},h),
\end{equation*}
then it is sufficient to show
\begin{equation} \label{smallcontract}
\|\eta_{\Z}-y_{\Z}\|_{\infty} + \|t-h\|_{\infty} \le 
\mathcal{O}(\e^{-(\alpha-a)N_*})(\|y_{\Z}\|_{\infty} + \|h \|_{\infty}).
\end{equation}
For a fixed $\ell \in I(s)$ we consider first the case
\begin{equation} \label{case1}
y_n= 0 \quad (n\le \ell_- \;\text{and}\; n> \ell_+),
\quad  h_m=0 \quad (m \neq \ell),
\end{equation}
and prove the stronger estimate (recall $\hat{N}=2 N_* +1$)
\begin{equation} \label{stronglocal}
\|\eta_{\Z}-y_{\Z}\|_{\omega} +|t_{\ell}-h_{\ell}|+
 \sup_{\ell\neq m\in I(s)}|t_m|
\e^{\alpha(|m-\ell|-\hat{N})} \le 
\mathcal{O}(\e^{-\alpha N_*})(\|y_{\Z}\|_{\omega} + |h_{\ell}|),
\end{equation}
where $\omega_n=\omega_n(a,\ell_-,\ell_+)$, see \eqref{weight}.
Then we consider the case
\begin{equation} \label{case2}
y_n=0 \quad (n\ne \ell_-), \quad h = 0
\end{equation}
and prove the estimate
\begin{equation} \label{strongpoint}
\|\eta_{\Z}-y_{\Z}\|_{\omega} + \sup_{m\in \Z}|t_m |
\e^{a|m-\ell_-|} \le 
\mathcal{O}(\e^{-aN_*})\|y_{\Z}\|_{\omega},
\end{equation}
where $\omega_n=\omega(a,\ell_-,\ell_-)$.
Let us first show that the general case \eqref{smallcontract} follows from \eqref{stronglocal} and \eqref{strongpoint}.
With $J^{0}(\ell)=\{\ell_-+1,\ldots,\ell_+\}$, $J^1(\ell)=\{\ell_-\}$ we 
decompose
\[ (y_{\Z},h) = \sum_{m\in I(s)}\left[ (\one_{J^0(m)}y_{\Z}, h_m \one_{\{m\}}) + (\one_{J^1(m)}y_{\Z},0) \right]
\]
and define
\[ (\eta_{\Z},t) = \sum_{m\in I(s)}B_+D_{(x,g)}G^0_s \left[ (\one_{J^0(m)}y_{\Z}, h_m \one_{\{m\}}) + (\one_{J^1(m)}y_{\Z},0) \right].
\]
Similar to the proof of \eqref{contractright} we combine the local estimates
\eqref{stronglocal} and \eqref{strongpoint},
\begin{equation*} 
\begin{aligned}
\|y_n-\eta_n\| &=  \mathcal{O}(\e^{-aN_*})
 \sum_{m\in I(s)} \left[\omega_n(a,m_-,m_+)(\|\one_{J^0(m)}y_{\Z}\|_{\omega}
+ |h_m|) \right. \\
 & \qquad \qquad \qquad \qquad + \left. \omega_n(a,m_-,m_-) \|\one_{\{m_-\}}y_{\Z}\|_{\omega}\right] \\
& =  \mathcal{O}(\e^{-aN_*})(\|y_{\Z}\|_{\infty}+\|h\|_{\infty})
 \sum_{m\in I(s)} \left[\omega_n(a,m_-,m_+)+ \omega_n(a,m_-,m_-)\right].\\
& = \mathcal{O}(\e^{-aN_*})(\|y_{\Z}\|_{\infty}+\|h\|_{\infty}).
\end{aligned}
\end{equation*}
Using the exponential weights of $t_m$ in \eqref{stronglocal} and \eqref{strongpoint} leads to the same estimate for $\|h-t\|_{\infty}$
and \eqref{smallcontract} is proved.

For the proof of \eqref{stronglocal} let 
$\gamma_m=\langle \beta^{-m}u_{\Z},y_{\Z}\rangle$ and note that \eqref{case1} implies $r_n=0$  for $n \notin J(\ell)$. Hence
$\eta_{\Z}^{\ell}=\beta^{\ell}y_{\Z}$ satisfies 
\begin{equation*} \eta_{n+1}^{\ell}-f_x(\bar{x}_n,0)\eta_n^{\ell} + h_{\ell}w_n
=r_{n+\ell} + \left[ f_x(p_{n+\ell}(s),0)-f_x(\bar{x}_n,0)\right]\eta_n^{\ell}.
\end{equation*}
Using Lemma \ref{lsing} and a Lipschitz estimate for $f_x$ we can compare with 
the solution 
$(y^{\ell}_{\Z},t_{\ell})=B_0 (\beta^{\ell}\one_{J(\ell)}r_{\Z},\gamma_{\ell})$ of \eqref{solvelocal}. This leads to
\[
\|\beta^{-\ell}y^{\ell}_{\Z}-y_{\Z}\|_{\omega} +|t_{\ell}-h_{\ell}|
= \mathcal{O}(\e^{-\alpha N^*}) (\|y_{\Z}\|_{\omega}+|h_{\ell}|).
\]
For the solutions $(y^{m}_{\Z},t_{m})=B_0 (\beta^{m}\one_{J(m)}r_{\Z},\gamma_{m})=B_0(0,\gamma_m)$ with $m\neq \ell$
Lemma \ref{lsing} gives
\[\|\beta^{-m}y^{m}_{\Z}\|_{\omega}+ |t_m| \le C^* |\gamma_m|
\le C_e \|y_{\Z}\|_{\omega} \sum_{n=\ell_-+1}^{\ell_+}\e^{-\alpha |n-m|}
= \mathcal{O}(\e^{-\alpha(|m-\ell|-N^*)})\|y_{\Z}\|_{\omega}.
\] 
Collecting the last two estimates we find that 
$(\eta_{\Z},t)=(\sum_{m\in I(s)}\beta^{-m}y^m_{\Z},(t_m)_{m\in I(s)})$
(cf.\ \eqref{bplusdefine}) satisfies inequality \eqref{stronglocal}.

In case condition \eqref{case2} holds, let 
\[ r_n = (D_xF(p_{\Z}(s),0)y_{\Z})_n \; (n\in \Z), \quad
\gamma_m= \langle \beta^{-m}u_{\Z},y_{\Z}\rangle \; (m\in I(s)),
\]
and note $r_n=0$ for $n\neq\ell_-,\ell_--1$ as well as
\begin{equation} \label{rightsparse}
r_{\ell_-} = - f_x(p_{\ell_-}(s),0)y_{\ell_-}, \quad
r_{\ell_- -1}= y_{\ell_-}.
\end{equation}
Moreover,
\begin{equation} \label{gammamest}
 |\gamma_m|=|u_{\ell_--m}^T y_{\ell_-}| \le C_e \e^{-\alpha|\ell_- - m|}
\|y_{\ell_-}\|, \quad m\in I(s).
\end{equation}
As in \eqref{solvelocal} let 
$(y_{\Z}^m,t_m)= B_0 (\beta^m\one_{J(m)}r_{\Z},\gamma_m)$ for $m\in I(s)$.
Recall $\ell_--1=\hat{\ell}_+$ from \eqref{lminusdef} where $\hat{\ell}\in I(s)$ is the left neighbor of $\ell$.
For $m\neq \ell,\hat{\ell}$ we have $y_{\Z}^m=\beta^{-m}\gamma_mu_{\Z}$, $t_m=0$
 by \eqref{solformula} and therefore by \eqref{gammamest},
\begin{equation} \label{ymrest}
\|y_{\Z}^m\|_{\omega} \le C_e^2 \|y_{\ell_-}\| \e^{-\alpha|\ell_--m|}
\sup_{n\in \Z} \e^{-\alpha|n-m|+a|n-\ell_-|} =
\mathcal{O}(\e^{-(\alpha-a)|\ell_--m|})\|y_{\ell_-}\|.
\end{equation}
We introduce the weights $\omega_n^*= \omega_n(2a,\ell_--\ell,\ell_--\ell)$.
 For $m=\ell$ we use \eqref{gammamest} and Lemma \ref{lsing} with $\omega^*$,
\begin{equation} \label{yomegaest}
\begin{aligned} 
\|y_{\Z}^{\ell}\|_{\omega^*} \le& 
  C^* \left( \|\beta^{\ell}\one_{J(\ell)}r_{\Z}\|_{\omega^*}
+|\gamma_{\ell}| \|u_{\Z}\|_{\omega^*}\right) \\
\le & C^*\left(1+ C \e^{-(\alpha-2a)|\ell_--\ell|} \right) \|y_{\ell_-}\|
\le C \|y_{\ell_-}\|.
\end{aligned}
\end{equation}
In a similar manner,
\begin{equation}\label{yhatest}
 \|y^{\hat{\ell}}_{\Z}\|_{\hat{\omega}^*}  \le C \|y_{\ell_-}\|
\end{equation}
holds for the weights 
$\hat{\omega}^*_n = \omega_n(2a,\hat{\ell}_+-\hat{\ell},\hat{\ell}_+-\hat{\ell})$.
The $t$-values satisfy
\begin{equation} \label{thatest} 
\begin{aligned}
|t_{\ell}| &= |u_{\ell_- -\ell}^T f_{x}(p_{\ell_-}(s),0)y_{\ell_-}|
             \le C \e^{-\alpha|\ell_- - \ell|}\|y_{\ell_-}\|, \\
|t_{\hat{\ell}}| & = |u_{\hat{\ell} -\hat{\ell}_+}^T y_{\ell_-}|
             \le C \e^{-\alpha|\hat{\ell} - \hat{\ell}_+|}\|y_{\ell_-}\|.
\end{aligned}
\end{equation}
In particular, this proves the $t$-estimates in \eqref{strongpoint}.
Next we estimate the difference 
$d_{\Z}= \beta^{-\ell}y_{\Z}^{\ell}+ \beta^{-\hat{\ell}}y_{\Z}^{\hat{\ell}}-y_{\Z}$
by using the exponential dichotomy on $\Z$ for the constant coefficient 
operator $\mathcal{L}_0 y_{\Z}=(y_{n+1}-A y_n)_{n\in \Z}$, $A=f_x(0,0)$.
From \eqref{rightsparse} and the definition of $y_{\Z}^{\ell},y_{\Z}^{\hat{\ell}}$ we find
\begin{equation*} 
\begin{aligned} 
d_{n+1}-A d_n = & \;  (f_{x}(\bar{x}_{n-\ell},0)-A)y_{n-\ell}^{\ell}
+ (f_x(\bar{x}_{n-\hat{\ell}},0)-A)y_{n-\hat{\ell}}^{\hat{\ell}} \\
 & - t_{\ell}w_{n-\ell} - t_{\hat{\ell}}w_{n-\hat{\ell}}
+(A-f_x(p_{\ell_-}(s),0))\delta_{n,\ell_-}y_{\ell_-} = \sum_{i=1}^5 T_i.
\end{aligned}
\end{equation*}
For every term  we show $\|T_i\| =\mathcal{O}( \e^{-aN_*})\omega_n
 \|y_{\ell_-}\|$ with weights  
$\omega_n=\e^{-a |n-\ell_-|}$. In fact, our previous 
estimates \eqref{yomegaest} - \eqref{thatest} yield
\begin{equation*} 
\begin{aligned}
\|T_1\|\omega_n^{-1} & \le C \|\bar{x}_{n-\ell}\|\e^{-a|n-\ell_-|}
\|y_{\Z}^{\ell}\|_{\omega^*} \\
& \le C \|y_{\ell_-}\|\e^{-\alpha|n-\ell|-a|n-\ell_-|} \le 
C \e^{-a|\ell-\ell_-|}\|y_{\ell_-}\|  =\mathcal{O}( \e^{-aN_*})
 \|y_{\ell_-}\|, \\
\|T_2\|\omega_n^{-1} & \le C\e^{-\alpha|n-\hat{\ell}|+ a |n-\ell_-|
-2a|n-\hat{\ell}_+|} \|y_{\ell_-}\| \le C \e^{-a|\hat{\ell}-\hat{\ell}_+|}
\|y_{\ell_-}\|=\mathcal{O}( \e^{-aN_*}) \|y_{\ell_-}\|, \\
\|T_3\|\omega_n^{-1} & \le C|t_{\ell}|\e^{a|n-\ell_-|-\alpha|n-\ell|}\le
C\e^{-(\alpha-a)|\ell-\ell_-|}\|y_{\ell_-}\|=\mathcal{O}( \e^{-aN_*}) \|y_{\ell_-}\|,\\
\|T_4\|\omega_n^{-1} & \le  C|t_{\ell}|\e^{a|n-\ell_-|-\alpha|n-\hat{\ell}|} \le
C\e^{-(\alpha-a)|\hat{\ell}-\hat{\ell}_+|}\|y_{\ell_-}\|=
\mathcal{O}( \e^{-aN_*}) \|y_{\ell_-}\|, \\
\|T_5\|\omega_n^{-1} & \le C \|\sum_{m\in I(s)}\bar{x}_{\ell_--m}\| \|y_{\ell_-}\|
= \mathcal{O}( \e^{-\alpha N_*}) \|y_{\ell_-}\|.
\end{aligned}
\end{equation*} 
Since the operator $\mathcal{L}_0$ has a Green's function with an exponentially
decaying kernel we infer  from Lemma \ref{plateau}
\[ \|d_{\Z}\|_{\omega}\le C \e^{-a N_*} \|y_{\ell_-}\|.
\]
Combining this with \eqref{ymrest} gives
\begin{equation*}
\begin{aligned}
\|\sum_{m\in I(s)}y_{n-m}^m - y_n\| \; &\le  \|d_n\| + \sum_{\ell,\hat{\ell}\neq m
\in I(s)} \|y_{n-m}^m\| \\
& \le  C\Big( \omega_n \e^{-a N_*} + \sum_{\ell,\hat{\ell}\neq m
\in I(s)}\e^{-a|\ell_- -m|}\Big) \|y_{\ell_-}\|= \mathcal{O}(\e^{-a N_*})\omega_n \|y_{\ell_-}\|.
\end{aligned}
\end{equation*}
This finally proves the $y$-estimates in \eqref{strongpoint}.
\end{proof} 


\begin{appendix}
\section{Auxiliary results}\label{ap1}
\begin{lemma}(Banach Lemma) \label{banachlemma}
Let $X,Y$ be Banach spaces and let $A\in L[X,Y]$, $B_-,B_+\in L[Y,X]$
be bounded linear operators such that
\[ \| I_{Y}-AB_+\| < 1, \quad \|I_{X}-B_-A\| <1.
\]
Then $A$ is a homeomorphism with
\begin{equation} \label{estinvA}
\|A^{-1}\| \le \min\left(\frac{\|B_+\|}{1-\|I_Y-AB_+\|},
\frac{\|B_-\|}{1-\|I_X-B_-A\|}\right).
\end{equation}
\end{lemma}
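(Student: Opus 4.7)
The plan is to apply the classical Neumann series twice: once on the operator $AB_+\in L[Y,Y]$ and once on $B_-A\in L[X,X]$. Since $\|I_Y - AB_+\| < 1$, the standard Neumann series
\[
(AB_+)^{-1} = \sum_{n=0}^{\infty}(I_Y - AB_+)^n \in L[Y,Y]
\]
converges absolutely and satisfies $\|(AB_+)^{-1}\| \le (1-\|I_Y - AB_+\|)^{-1}$. Analogously, $B_-A$ is invertible in $L[X,X]$ with $\|(B_-A)^{-1}\| \le (1-\|I_X - B_-A\|)^{-1}$.

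Next I would construct a right inverse $R := B_+(AB_+)^{-1}\in L[Y,X]$ of $A$, noting $AR = (AB_+)(AB_+)^{-1} = I_Y$, and a left inverse $L := (B_-A)^{-1}B_-\in L[Y,X]$, noting $LA = (B_-A)^{-1}(B_-A) = I_X$. The standard two-line argument
\[
L = L\,I_Y = L(AR) = (LA)R = I_X\,R = R
\]
shows that $L=R$, so $A$ is bijective with bounded inverse $A^{-1} = R = L$, i.e.\ $A$ is a homeomorphism.

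Finally, the norm estimate follows from the two representations of $A^{-1}$:
\[
\|A^{-1}\| = \|B_+(AB_+)^{-1}\| \le \frac{\|B_+\|}{1-\|I_Y-AB_+\|},\qquad
\|A^{-1}\| = \|(B_-A)^{-1}B_-\| \le \frac{\|B_-\|}{1-\|I_X-B_-A\|},
\]
and taking the minimum of the two right-hand sides yields \eqref{estinvA}.

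There is no real obstacle here: the only nontrivial point is to observe that the left and right inverses must coincide, which is the classical associativity trick above. Everything else is the Neumann series and submultiplicativity of the operator norm.
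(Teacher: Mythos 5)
Your proof is correct and follows essentially the same route as the paper: the paper solves the fixed-point equation $y=(I_Y-AB_+)y+r$ (i.e.\ inverts $AB_+$ by contraction, which is exactly your Neumann series) to get existence via $x=B_+y$, and uses contractivity of $I_X-B_-A$ for uniqueness, which is equivalent to your left-inverse/associativity step. The norm bounds are obtained identically in both arguments.
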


\begin{proof} Note that $y=(I_Y-AB_+)y + r$ has a unique solution $y$
for every $r\in Y$. Then $y$ satisfies
\[ \|y\|\le \frac{\|r\|}{1-\|I_Y-AB_+\|}
\]
and $x=B_+y$ solves $Ax=r$. To prove uniqueness, note that any solution
$x$ of $Ax=r$ solves $x=(I_X-B_-A)x+B_-r$. Since $I_X-B_-A$ is also
contractive the solution is unique and the estimates follow.
\end{proof}

A key tool in  the proofs of Lemma \ref{L1} and Theorem \ref{Th5.1} is the 
following
 quantitative version of the Lipschitz inverse mapping theorem, 
cf.\  \cite{ir02}.
\begin{theorem}\label{Va2}
Assume $Y$ and $Z$ are Banach spaces,
 $F \in C^1(Y,Z)$ and 
$F'(y_0)$ is for $y_0 \in Y$ a homeomorphism. Let 
$\kappa, \ \sigma, \ \delta > 0$ be three constants, such that the 
following estimates hold:
\begin{eqnarray}\label{invlip}
\big\|F'(y) - F'(y_0)\big\| 
&\le& \kappa < \sigma \le \frac 1 {\big\|F'(y_0)^{-1}\big\|}
\quad \forall y \in B_\delta(y_0),\label{V1}\\ \label{Fresidual}
\big\|F(y_0)\big\| &\le& (\sigma - \kappa)\delta \label{V2}. 
\end{eqnarray}   
Then $F$ has a unique zero
$\bar y \in B_\delta(y_0)$ and the following inequalities are satisfied
\begin{eqnarray}
\big\|F'(y)^{-1} \big\| &\le& \frac 1 {\sigma - \kappa} \quad 
\forall y \in B_\delta(y_0) \label{A1},\\
\|y_1 - y_2\| &\le& \frac 1 {\sigma - \kappa} \big\|F(y_1) - F(y_2)\big\|
\quad \forall y_1, \ y_2 \in B_\delta(y_0). \label{A2} 
\end{eqnarray}
\end{theorem}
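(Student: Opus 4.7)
The plan is to establish this in three natural steps, treating it as a standard Newton--Kantorovich-type argument with quantitative bounds driven by the two constants $\sigma$ and $\kappa$.

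First I would prove the invertibility bound \eqref{A1}. For any $y \in B_\delta(y_0)$ I factor $F'(y) = F'(y_0)\bigl(I + F'(y_0)^{-1}(F'(y)-F'(y_0))\bigr)$ and apply the Banach series lemma, noting that $\|F'(y_0)^{-1}(F'(y)-F'(y_0))\| \le \tfrac{\kappa}{\sigma} < 1$ by the two hypotheses. This yields that $F'(y)$ is a homeomorphism with $\|F'(y)^{-1}\| \le \tfrac{1}{\sigma-\kappa}$.

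Next I would derive the bi-Lipschitz estimate \eqref{A2}. The key identity is
\begin{equation*}
F(y_1)-F(y_2) = F'(y_0)(y_1-y_2) + \int_0^1 \bigl[F'(y_2+t(y_1-y_2)) - F'(y_0)\bigr](y_1-y_2)\,dt,
\end{equation*}
valid on the convex set $B_\delta(y_0)$. Applying $F'(y_0)^{-1}$, taking norms, and using the Lipschitz hypothesis \eqref{invlip} yields $\|y_1-y_2\| \le \tfrac{1}{\sigma}\|F(y_1)-F(y_2)\| + \tfrac{\kappa}{\sigma}\|y_1-y_2\|$, which rearranges to \eqref{A2}. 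As an immediate corollary, $F$ has at most one zero in $B_\delta(y_0)$.

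Finally, existence of the zero $\bar{y}$ I would obtain by a simplified Newton contraction. Define $T:B_\delta(y_0)\to Y$ by $T(y) = y - F'(y_0)^{-1}F(y)$. Using the same integral identity as above (now around $y_0$ rather than $y_2$), one computes $T(y)-y_0 = -F'(y_0)^{-1}F(y_0) - F'(y_0)^{-1}\int_0^1[F'(y_0+t(y-y_0))-F'(y_0)](y-y_0)\,dt$, and a direct estimate gives $\|T(y)-y_0\| \le \tfrac{1}{\sigma}\|F(y_0)\| + \tfrac{\kappa}{\sigma}\delta$; the residual condition \eqref{Fresidual} is exactly what is needed to make this $\le \delta$, so $T$ maps $B_\delta(y_0)$ into itself. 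The same integral trick gives $\|T(y_1)-T(y_2)\| \le \tfrac{\kappa}{\sigma}\|y_1-y_2\|$, a strict contraction since $\kappa<\sigma$. Banach's fixed point theorem then produces a unique fixed point $\bar{y}\in B_\delta(y_0)$, which is the desired zero of $F$.

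The only delicate step is engineering the self-mapping property of $T$: this is precisely where the two quantitative hypotheses must balance each other, since the residual contribution $\tfrac{1}{\sigma}\|F(y_0)\|$ is compensated exactly by the slack $(\sigma-\kappa)\delta/\sigma = \delta - \kappa\delta/\sigma$ left over from the contraction estimate. Everything else is a routine application of Banach series inversion and the mean value theorem in Banach spaces.
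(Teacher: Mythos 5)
Your proof is correct: the Neumann-series bound gives \eqref{A1}, the integral (mean value) identity on the convex ball gives \eqref{A2} and hence uniqueness, and the simplified Newton map $T(y)=y-F'(y_0)^{-1}F(y)$ is a self-map of the closed ball (here \eqref{V2} enters exactly as you say) and a $\kappa/\sigma$-contraction, so Banach's fixed point theorem yields the zero. The paper itself does not prove this theorem but only cites it (to the reference given after ``Lipschitz inverse mapping theorem''), and your argument is precisely the standard contraction proof behind that citation, so there is nothing to reconcile.
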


We collect some well known results on exponential
dichotomies from \cite{pa88}.
Denote by $\Phi$ the solution operator of the linear difference
equation
\begin{equation}\label{app4}
y_{n+1} = A_n y_n,\quad n \in \Z,
\end{equation}
which is defined as 
\begin{equation} \label{defPhi}
\Phi(n,m) := \left\{
\begin{array}{cl}
A_{n-1} \ldots A_m,\quad & \text{for }
n>m,\\
I, &\text{for } n = m,\\
A_n^{-1} \ldots  A_{m-1}^{-1},\quad&
\text{for }  n < m.
\end{array}
\right.
\end{equation}

\begin{definition}\label{b14}
The linear difference equation \eqref{app4} 
with invertible matrices $A_n \in \R^{k,k}$ 
has an \textbf{exponential dichotomy}
with data $(K,\alpha,P_n^{s},P_n^u)$
on an interval $J\subset \Z$, if there exist two families of projectors
$P_n^{s}$ and $P_n^u = I-P_n^{s}$ and constants
$K,\ \alpha >0$, such that the following statements hold:
$$
P_n^s \Phi(n,m) = \Phi(n,m)P_m^s \quad \forall n,m \in J,
$$
$$
\begin{array}{ccl}
\|\Phi(n,m)P_m^{s}\| &\le& K\e^{-\alpha(n-m)}
\vspace{2mm}\\
\|\Phi(m,n)P_n^u\| &\le& K\e^{-\alpha(n-m)}
\end{array}
\quad \forall n \ge m,\ n,m\in J.
$$
\end{definition}

\begin{theorem} (\textrm{Roughness Theorem, cf.\ \cite[Proposition
  2.10]{pa88}}) \label{rough} 
Assume that the difference equation
$$
y_{n+1} = A_n y_n,\quad A_n \in \R^{k,k} \text{ invertible}, \quad
\|A_n^{-1}\| \le M\ \forall n \in J
$$
with an interval $J \subseteq \Z$,
has an exponential dichotomy with data $(K,\alpha,P_n^s,P_n^u)$.
Suppose $B_n \in \R^{k,k}$ satisfies $\|B_n\|
\le b$ for all $n \in J$ with a sufficiently small $b$. 
Then $A_n + B_n$ is invertible and the perturbed difference equation 
$$
y_{n+1} = (A_n + B_n) y_n
$$
has an exponential dichotomy on $J$.
\end{theorem}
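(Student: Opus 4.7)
The plan is to follow the classical variation-of-constants route for proving roughness of exponential dichotomies. The argument splits into three parts: invertibility of $A_n + B_n$, construction of new invariant splittings by a fixed-point argument, and exponential decay estimates for the perturbed cocycle.

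First I would handle pointwise invertibility. Writing $A_n + B_n = A_n(I + A_n^{-1} B_n)$ with $\|A_n^{-1} B_n\| \le M b$, the Neumann series gives invertibility as soon as $b < 1/M$, with a uniform bound on $(A_n+B_n)^{-1}$. Let $\tilde \Phi(n,m)$ denote the corresponding solution operator. For the dichotomy, I would characterize bounded solutions of the perturbed equation $y_{n+1} = (A_n + B_n) y_n$ via the Green's function $G$ of the unperturbed dichotomy, as already used in the proof of Lemma \ref{L1} (formulas \eqref{greenplus}, \eqref{greenminus}). Concretely, a sequence $y_J \in \ell^\infty$ is a solution of the perturbed equation iff it is a fixed point of
\[
(Ty)_n = \Phi(n,n_0)\eta + \sum_{m \in J} G(n,m+1)\, B_m\, y_m,
\]
where $\eta$ ranges in $\range(P_{n_0}^s)$ on a forward half-interval (with the Green's function $G_+$), in $\range(P_{n_1}^u)$ on a backward half-interval (with $G_-$), or in a direct sum on a two-sided interval. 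By the dichotomy estimates $\|G(n,m)\| \le K \e^{-\alpha|n-m|}$ and a standard geometric sum (cf.\ Lemma \ref{plateau} with trivial weight), the second term in $T$ is a bounded operator on $\ell^\infty(J, \R^k)$ with norm bounded by $b K C(\alpha)$; for $b$ small enough this is a strict contraction, so the fixed-point equation has a unique solution depending linearly and boundedly on $\eta$.

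Next I would define the perturbed projectors. Set $\range(\tilde P_n^s)$ to be the space of initial values at $n$ of perturbed orbits bounded on $\{k \in J : k \ge n\}$, and $\range(\tilde P_n^u)$ analogously for the backward half-interval. From the contraction argument, $\range(\tilde P_n^s)$ is the graph over $\range(P_n^s)$ of a linear map of norm $\OO(b)$ into $\range(P_n^u)$, and similarly for $\tilde P_n^u$. Hence for $b$ small the two perturbed subspaces are complementary in $\R^k$, defining $\tilde P_n^s, \tilde P_n^u = I - \tilde P_n^s$ with $\|\tilde P_n^{s,u} - P_n^{s,u}\| = \OO(b)$. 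Invariance $\tilde \Phi(n,m)\tilde P_m^s = \tilde P_n^s \tilde \Phi(n,m)$ follows from the characterization via bounded half-orbits. To obtain the exponential estimates, I would repeat the fixed-point argument in a weighted norm $\|y\|_\beta = \sup_n \e^{\beta(n-m)}\|y_n\|$ with $0 < \beta < \alpha$, in which $G$ still yields a bounded kernel (with a slightly larger constant). This produces $\|\tilde \Phi(n,m)\tilde P_m^s\| \le \tilde K \e^{-\tilde \alpha (n-m)}$ for $n \ge m$, and symmetrically for the unstable direction.

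The main obstacle is the last step: extracting uniform constants $\tilde K, \tilde \alpha$ depending only on $K, \alpha, M$ and the smallness parameter $b$, and not on the length or location of the interval $J$. The cleanest way is to set up the fixed-point argument in the exponentially weighted norm from the outset, so that the contraction estimate and the dichotomy inequalities are obtained simultaneously; the price is a slightly reduced exponent $\tilde \alpha < \alpha$. This is exactly the approach used by Palmer, from whom the statement is quoted as \cite[Proposition 2.10]{pa88}, and a detailed execution can be found there.
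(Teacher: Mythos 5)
The paper offers no proof of this auxiliary theorem: it is imported verbatim from the cited reference, and your sketch reproduces exactly the classical variation-of-constants / fixed-point argument used there (Neumann series for invertibility, Green's-function contraction, perturbed invariant subspaces as graphs, weighted norms for the decay estimates), so the approach matches the source the paper defers to and the outline is correct. The one point to watch is that your description of $\range(\tilde P_n^s)$ as the set of initial values of orbits bounded on the forward part of $J$ is only meaningful when that part is infinite; on the finite intervals actually used in Lemma \ref{L1} you must rely on your parametrized fixed-point construction (the graph over $\range(P_n^s)$ given by $\eta \mapsto y$), which does deliver projectors and constants uniform in the length and location of $J$.
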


\end{appendix}

\nocite{kl97}
\nocite{kl98a}
\nocite{bhkz04}
\nocite{p00}
\nocite{gh90}
\nocite{lm95}

\small
\bibliographystyle{abbrv}
\bibliography{Literatur}

\end{document}